\newtheorem{thm}{Theorem}[section]
\newtheorem{theorem}[thm]{Theorem}
\newtheorem{proposition}[thm]{Proposition}
\newtheorem{corollary}[thm]{Corollary}
\newtheorem{lemma}[thm]{Lemma}
\theoremstyle{definition}
\newtheorem{definition}[thm]{Definition}
\newtheorem{construction}[thm]{Construction}
\theoremstyle{remark}
\newtheorem{remark}[thm]{Remark}
\newcommand{\R}{{\mathbb{R}}}
\newcommand{\C}{{\mathbb{C}}}
\newcommand{\Q}{{\mathbb{Q}}}
\newcommand{\Z}{{\mathbb{Z}}}
\newcommand{\T}{\mathbb{T}}
\begin{document}

\title[Skein valued cluster transformations from Legendrian mutation]{Skein valued cluster transformation in  \\ enumerative geometry of Legendrian mutation} 
\author{Matthias Scharitzer}
\address{Centre for Quantum Mathematics, SDU, Campusvej 55, 5230 Odense M, Denmark}
\email{matthias.scharitzer@gmail.com} 
\author{Vivek Shende}
\address{Centre for Quantum Mathematics, SDU, Campusvej 55, 5230 Odense M, Denmark 
$\qquad \qquad$ \& Department of Mathematics, UC Berkeley, Evans Hall, Berkeley CA 94720, USA}
\email{vivek.vijay.shende@gmail.com}

\begin{abstract} 
Under certain hypotheses, we show
that Legendrian surfaces related by
disk surgery will have 
q-deformed augmentation spaces that are 
related by q-deformed 
cluster transformation.  The proof is 
geometric, via considerations of moduli of holomorphic curves.  In fact, our results naturally 
give a more general HOMFLYPT `skein-valued cluster
transformation', of which the q-cluster transformation
is the U(1) specialization. 

We apply our methods to the Legendrians 
associated to cubic planar graphs, where  mutation of graphs lifts 
to Legendrian disk surgery. We show
that their skein-valued mirrors
transform by skein-valued cluster transformation,
and give a formula for the 
skein-valued curve counts on their fillings.  
\end{abstract}

\maketitle

\thispagestyle{empty} 

\section{Introduction}

Let $(V, \xi)$ be a 5-dimensional contact manifold, and $\Lambda \subset V$ be a 2-dimensional Legendrian.   We will always assume that $c_1(\xi) = 0$, that $\Lambda$ has vanishing Maslov class, 
and that all Reeb orbits of $V$ and all Reeb chords of $\Lambda$ have positive index.   
Then, for any index one Reeb chord $\rho$ of $\Lambda$, the skein-valued curve counting of \cite{SOB, bare, ghost} can be applied to the symplectization $(V \times \R, \Lambda \times \R)$, and produces an element 
\begin{equation}\mathbf{A}(\Lambda, \rho) \in Sk(\Lambda \times \R).\end{equation}    
Here, for a $3$-manifold $M$, we write $Sk(M)$ for the HOMFLYPT skein of $M$, which, by definition, is generated over $\Z[a^{\pm}, q^{\pm 1/2}]$ by framed links in $M$, subject to the relations 
in Figure \ref{HOMFLYPT skein}. If $(W, L)$ is a filling of $(V, \Lambda)$, and $\Psi_L \in Sk(L)$ is the count of holomorphic curves ending on $L$, then, under the natural action $Sk(\Lambda \times \R)$ on $Sk(L)$, we have \cite{unknot, Scharitzer-Shende}:
\begin{equation} \label{recursion}
 \mathbf{A}(\Lambda, \rho) \Psi_L = 0.
\end{equation}

Given a Lagrangian disk $D$ ending on $\Lambda$, there is a Legendrian disk surgery operation yielding a new Lagrangian $\Lambda_D$.  (The operation is a Legendrian lift of the relationship between the two Polterovich surgeries of a Lagrangian double point.)  

\vspace{2mm}
The purpose of the present article is to establish and explore the following identity: 
\begin{equation} \label{main formula} \mathbf{A}(\Lambda_D,\rho_+) \mathbf{E}(\partial D) = \mathbf{E}(\partial D)  \mathbf{A}(\Lambda,\rho_-) \end{equation}
We state a precise result in Theorem \ref{cluster transformation from perfect cobordism} below; there are various nontrivial topological and geometric hypothesis required. 
For now let us explain what the terms mean.  So long as there are no Reeb chords between $\Lambda$ and $D$ (as we will assume),  there is a natural bijection between Reeb chords of $\Lambda$ and those of $\Lambda_D$; here $\rho_+$ and $\rho_-$ are two matching such chords.  We have already explained $\mathbf{A}$.  Multiplication in the skein is concatenation in the $\R$ direction of $\Lambda \times \R$.  Finally, $\mathbf{E}$ is an element living in (a certain completion of) the skein of the solid torus, and $\mathbf{E}(\partial D)$ means we insert this element along the curve $\partial D$.  

The element $\mathbf{E}$ is, geometrically, the skein-valued count of holomorphic curves in $\mathbb{C}^3$, ending on the Harvey-Lawson/Aganagic-Vafa brane.  In \cite{unknot}, it was shown that for the standard Legendrian Clifford torus $\T_{Cl} \subset S^5$, one has:
\begin{equation} \label{clifford torus operator}
    \mathbf{A}(\T_{Cl}) = \bigcirc - m + a \ell 
\in Sk(T^2 \times \R),
\end{equation}
It was also shown that the corresponding specialization of Equation  \eqref{recursion} has a unique solution up to scalar multiple in a certain appropriate completion of $Sk(\mathbf{T})$; said solution must, therefore, be $\mathbf{E}$.  An explicit formula for $\mathbf{E}$ can be found in \cite{unknot}, which agrees with the appropriate 1-leg specialization of the topological vertex of \cite{AKMV}.  
In this article we only use the characterization of  $\mathbf{E}$ as the unique solution of \eqref{clifford torus operator}. 

\vspace{2mm}

Equation \eqref{main formula} is a skein-valued lift of a q-cluster transformation in the sense of \cite{Fock-Goncharov-quantum}.  One can see this as follows.  Consider the $U(1)$ or `linking' 
skein, $Lk(M)$, which is by definition
the linear span of framed links over $\Z[q^{\pm1/2}]$ modulo the following relations: 
$$\overcrossing = q^{1/2} \,\, \smoothing \qquad \qquad \qquad \bigcirc = 1$$
Note this quotient factors through $Sk(M)/(a=q^{1/2})$.  

When $S$ is a surface, 
$Lk(S \times \mathbb{R})$ is naturally identified with the quantum torus associated to the symplectic lattice $H_1(S, \Z)$.  Moreover, there is a natural identification of the skein action
$Lk(T^2 \times \R) \circlearrowright Lk(\mathbf{T})$ with the polynomial representation of the quantum torus.  In fact $\mathbf{A}(\T_{Cl})|_{Lk}$ is the $q$-difference equation for the exponential of the $q$-dilogarithm; so $\mathbf{E}|_{Lk}$ is said exponentiated dilogarithm. 
(We give more details around skeins and $q$-dilogarithms in Section \ref{skein dilogarithm}.)  Thus, if we specialize 
equation \eqref{main formula}  to the linking skein, we see a conjugation by a $q$-dilogarithm, which is by definition
the $q$-cluster transformation of \cite{Fock-Goncharov-quantum}.

\begin{figure}
    \includegraphics[scale=0.3]{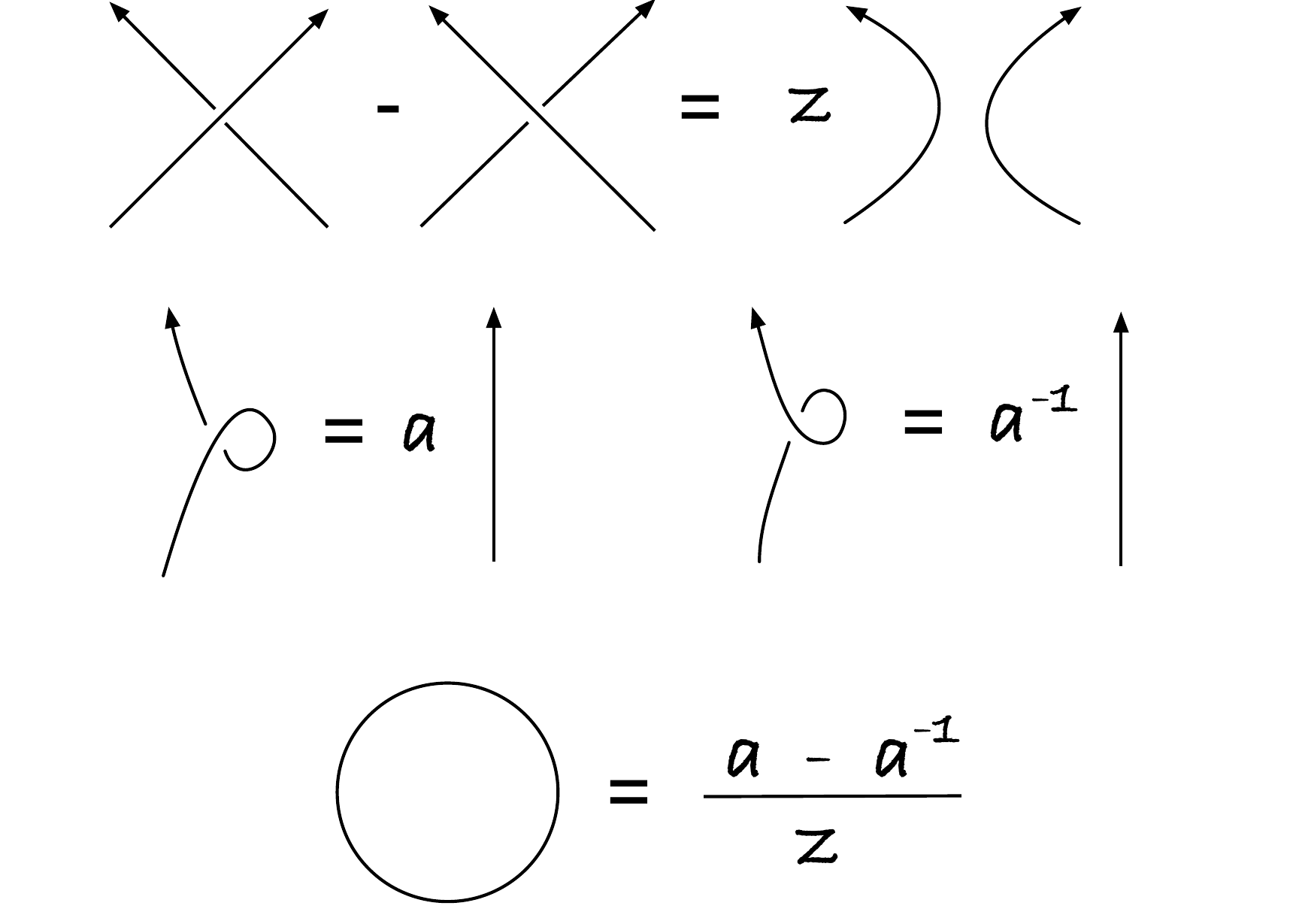}
    \caption{HOMFLYPT skein relations in the variable $z = q^{1/2} - q^{-1/2}$.} 
    \label{HOMFLYPT skein}
\end{figure}

\vspace{2mm}
A class of examples to which our work applies, and by which we were largely motivated, are the cubic planar graph Legendrian surfaces of Treumann and Zaslow in \cite{Treumann-Zaslow}.
The Legendrians
are constructed as follows.  First, take a cubic graph $\Gamma \subset S^2$; then form the (essentially unique) Legendrian 
$L_\Gamma \subset J^1(S^2)$ characterized by the fact that $L_\Gamma \to S^2$ is a double cover branched at the vertices of $\Gamma$, 
such that the crossings of the front projection $L_\Gamma \to S^2 \times \R$  sit over $\Gamma$.   Now implant this into a standard
neighborhood of $S^2 =  \partial \R^3 \subset \partial \C^3 = S^5$.

The various possible trivalent graphs  of given genus are related by edge flips,
as depicted in Figure \ref{Fig:Edge-Flip}. 
For an edge $e$ of a cubic planar graph $\Gamma$, we write $\Gamma_e$ for the result of flipping $e$. 
This system is known to organize an interesting cluster algebra \cite{Fock-Goncharov-Teichmuller}. 
The q-deformation of this cluster algebra was studied by Schrader, Shen, and Zaslow in \cite{SSZ}.  
They consider $\mathcal{T}_q(\Lambda_\Gamma)$, 
a (twist of the) quantum torus associated to $H^1(\Lambda_\Gamma)$, and define by explicit formula 
some element $\mathcal{A}_q^{SSZ} (L_\Gamma) \subset \mathcal{T}_q(\Lambda_\Gamma)$. 
The definition of $\mathcal{A}_q^{SSZ}(\Lambda_\Gamma)$ is entirely algebraic, making no mention of symplectic geometry or holomorphic curves.  However \cite{SSZ}
further conjectured that $\mathcal{A}_q^{SSZ}$ should annihilate
an open Gromov-Witten curve count for certain Lagrangian fillings of $L_\Gamma$, and showed that for a subset of graphs (those obtained by an ``admissible sequence of mutations'' from a ``necklace graph'', such as $g = 2$ example in Figure \ref{Fig:Edge-Flip}), the prescription  
that $\mathcal{A}_q^{SSZ}$ annihilates the count would determine the count uniquely. 


\begin{figure}
    \begin{picture}(50,75)
    \put(-150,0){\includegraphics[width=12cm, trim=0cm 5cm 0cm 0cm, clip]{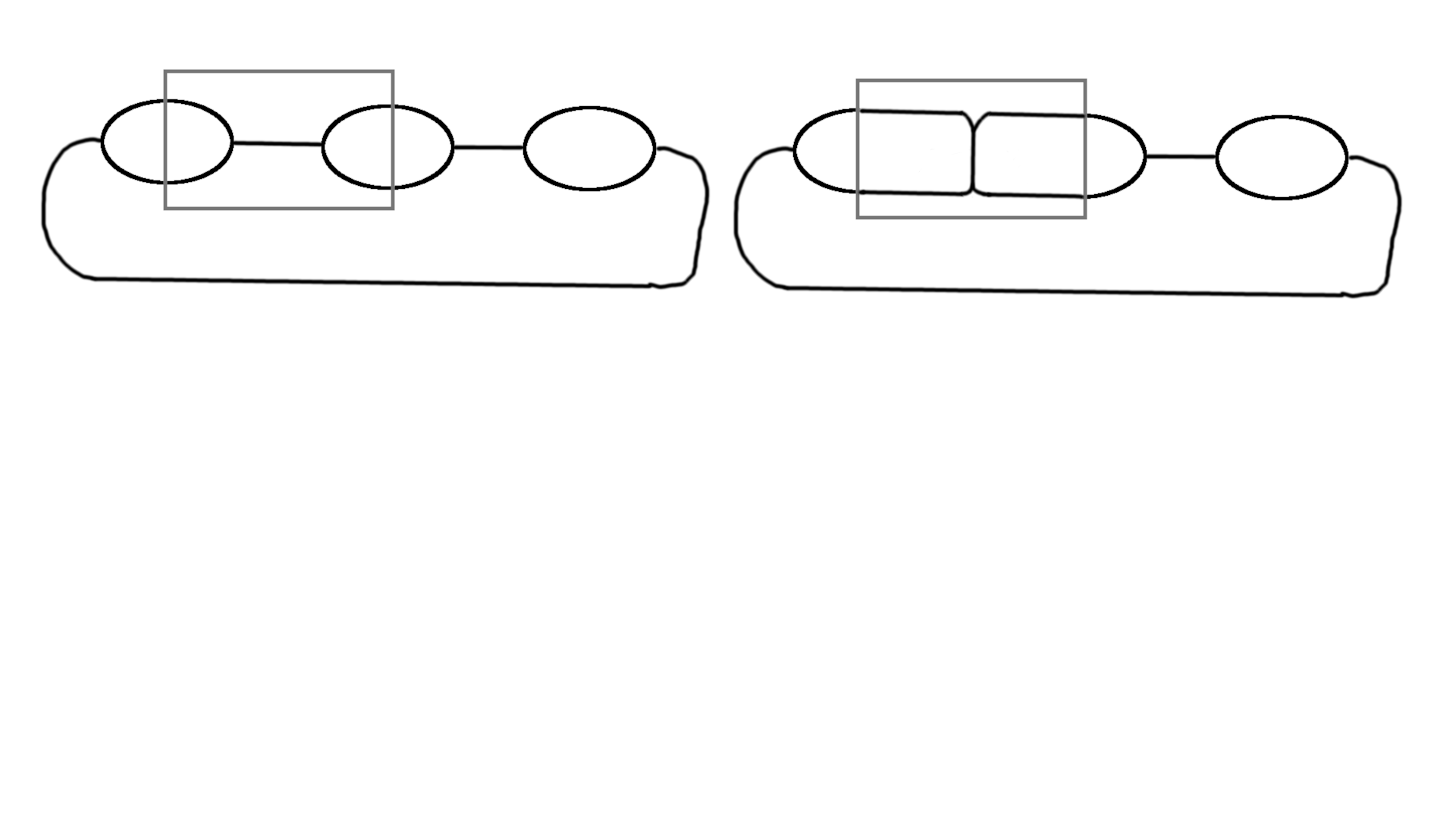}}
    
    \end{picture}
    \caption{On the left the ``necklace'' graph of genus $2$ and on the right a graph obtained by flipping the edge in the grey box.}  
    \label{Fig:Edge-Flip}
\end{figure}
 
  In \cite{Scharitzer-Shende}, we proved the geometric conjectures of \cite{SSZ}\footnote{Schrader, Shen, and Zaslow formulated their conjecture
ambiguously in terms of an unspecified higher genus open Gromov-Witten theory \cite{SSZ}.  It 
is not entirely accurate to call the skein-valued curve counting theory of \cite{SOB} a `Gromov-Witten
theory'; in particular, the closed-string count in the \cite{SOB} sense is related to the disconnected Gromov-Witten count by the change of variable $z = q^{1/2} - q^{-1/2} = e^{u/2} - e^{-u/2}$, where $u$ is the Gromov-Witten
genus counting parameter \cite{bare}.  On the other hand, no other all genus open curve counting theory is available in the literature.}, by establishing equation \eqref{recursion} in the necessary context, determining $\mathbf{A}(\Lambda_\Gamma)$, and checking $\mathbf{A}(\Lambda_\Gamma)|_{Lk}=\mathcal{A}^{SSZ}_q(\Lambda_\Gamma)$.  (Said differently, one could a-historically read \cite{SSZ} as showing that the geometric elements $\mathbf{A}(\Lambda_\Gamma)|_{Lk}$ are related by $q$-cluster transformation.)

In this article, we show that the cubic planar graph Legendrians and their disk surgeries satisfy the hypotheses needed to apply equation \eqref{main formula}, and hence that the $\mathbf{A}(\Lambda_\Gamma)$ are related by skein-valued cluster transformation.  The precise statement is given below as Theorem \ref{cubic planar graph cluster}.  We also prove correspondingly that the operator equation \eqref{recursion} determines the curve count on a filling associated to an admissible sequence of mutations (Corollary \ref{admissible sequences determined}), and in fact for a somewhat larger class (see Theorem \ref{uniqueness}, Lemma \ref{admissible implies composable}, Remark \ref{composable more general}).  

As a final illustration,  we study in Section \ref{sec: 5 terms} one example for $g=2$, in which appears a skein-valued incarnation of the pentagon relation for the quantum dilogarithm.

\begin{remark}
    Let us mention the article \cite{HSZ}, contemporary with the present work, which shows by purely algebraic methods that the  formulas for the  $\mathbf{A}(\Lambda_\Gamma)$ given in  \cite{Scharitzer-Shende} satisfy the skein valued cluster transformation \eqref{main formula}.  By contrast, here we show that the $\mathbf{A}(\Lambda_\Gamma)$ satisfy \eqref{main formula} without reference to the formulas for the  $\mathbf{A}(\Lambda_\Gamma)$.  
    Logically speaking, for results about $\mathbf{A}(\Lambda_\Gamma)$, any two of \cite{Scharitzer-Shende}, \cite{HSZ}, and the present work, determine the third. 
    However, only in the present article can be found a geometric explanation of why the q- or skein-valued cluster transformations should appear. 
\end{remark}

\vspace{2mm}

{\bf Acknowledgements.}  We thank Tobias Ekholm, Anton Mellit, Georgios Dimitroglou Rizell, Peter Samuelson, Gus Schrader, Eric Zaslow and Peng Zhou for helpful discussions. We  thank Lukas Nakamura for pointing out a mistake in Lemma \ref{skein meridians cut} in an earlier version of this article. The work presented in this article is supported by Novo Nordisk Foundation grant NNF20OC0066298, Villum Fonden Villum Investigator grant 37814, and Danish National Research Foundation grant DNRF157. 

\section{The skein lift of the q-dilogarithm}  \label{skein dilogarithm}

The q-cluster transformations of \cite{Fock-Goncharov-quantum} are expressed in terms of conjugation by the exponential of the q-dilogarithm.  Here we recall this series and its
properties, explain their interpretation  in the linking skein, and propose a  lift to the HOMFLYPT skein.

We use the q-numbers $[n]_q = 1-q^{-n}$
and the corresponding q-derivative $\partial^{(q)}_x$ acting by 
$$\partial^{(q)}_x (x^n) = [n]_q \cdot x^{n-1}$$
We consider the power series
solution to $\partial^{(q)}_x \mathcal{E}_q(x) = \mathcal{E}_q(x)$ with leading term $1$; explicitly 
$$\mathcal{E}_q(x) = \sum_{k=0}^{\infty} \frac{x^n}{[n]_q!}$$ 
where the q-factorial is as usual $[n]_q! = [n]_q [n-1]_q \cdots [1]_q$. 
We write $\Q_q$ for $\Z[q]$ with all $[n]_q$ inverted; then $\mathcal{E}_q(x) \in \Q_q[[x]]$.

We introduce
\begin{eqnarray*} 
\ell : \Q_q[[x]] &  \to &  \Q_q[[x]] \\
f(x) & \mapsto & xf(x) 
\end{eqnarray*} 
and
\begin{eqnarray*} 
m: \Q_q[[x]] &  \to &  \Q_q[[x]] \\
f(x) & \mapsto & f(qx)
\end{eqnarray*} 
Note $\ell m = q m \ell$; we also have $x \partial^{(q)}_x = 1 - m$.  From the second identity 
we rewrite the defining  equation: 
\begin{equation}  \label{q-exponential equation} 
(1 - m - \ell) \cdot \mathcal{E}_q(x) = 0
\end{equation}

There is a celebrated `pentagon identity': 
If $x, y$ are $q$-commuting variables, $xy = q yx$, then 
\cite{Schutzenberger-interpretation, Fadeev-Volkov-virasoro, Fadeev-Kashaev-dilogarithm}: \begin{equation}
\mathcal{E}_q(x)\mathcal{E}_q (y) = \mathcal{E}_q (y) \mathcal{E}_q(- yx) \mathcal{E}_q (x)
\end{equation}

In q-cluster algebra, the cluster transformation associated to a variable $x$ is given by conjugation by $\mathcal{E}_q(x)$ \cite{Fock-Goncharov-quantum}.  The pentagon identity reflects a corresponding structure of the $A_2$ cluster algebra 
\cite{Goncharov-pentagon}. 

\vspace{2mm}

The operators $\ell, m$ have a natural interpretation in terms of the linking skein.  Let $\mathbf{T}$ be a solid torus, 
and $\mathbb{T}$ its boundary.  
Choose a framed, oriented longitude $\ell$ of  $\mathbb{T}$.  We choose the framing and orientation of the 
meridian $m$ such that  $\ell m = q m \ell \in Lk(\mathbb{T})$ and
$m \to 1$ under the natural map $Lk(\mathbb{T}) \to Lk(\mathbf{T})$.  We write $x \in \mathbb{T}$ for the image of $L$.  Then
(after appropriate extension of scalars) the action of these $\ell, m \in Lk(\mathbb{T})$ on $Lk(\mathbf{T})$ is identified with the action
on $\Q_q[[x]]$ described above.  This identifies $\mathcal{E}_q(x)$ with the element of an (appropriately completed) $Lk(\mathbf{T})$ 
solving equation \eqref{q-exponential equation}, where now $\ell$ and $m$ mean `longitude' and `meridian'.

We lift to the HOMFLYPT skein.  Fix an orientation of the longitude of $\mathbf{T}$.
We have a splitting 
$Sk(\mathbf{T}) = Sk^+(\mathbf{T}) \otimes Sk^-(\mathbf{T})$ into positively and negatively 
winding links.   
We recall: 

\begin{lemma} \label{unique solution for clifford} \cite{unknot}
    For any scalar $\gamma$, there is a unique element 
    $$\mathbf{E}^\gamma = 1 + \cdots\in \widehat{Sk}^+(\mathbf{T})$$ 
    which solves the equation
\begin{equation} \label{skein dilogarithm equation}
(\bigcirc - m - \gamma \ell) \cdot \mathbf{E}^\gamma = 0
\end{equation}  
Here, $\bigcirc$ denotes the unknot, which
in the skein is $\bigcirc = (a-a^{-1})/z$.
\end{lemma}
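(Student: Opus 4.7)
The plan is to use the winding-number filtration on $Sk^+(\mathbf{T})$ to convert \eqref{skein dilogarithm equation} into a recursion whose unique solution has leading coefficient $1$.

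First, decompose $Sk^+(\mathbf{T}) = \bigoplus_{n \geq 0} V_n$, where $V_n$ is spanned by framed links of total winding number $n$ around the longitude. The completion $\widehat{Sk}^+(\mathbf{T})$ is the product $\prod_{n\geq 0} V_n$. I would then observe how the three ingredients of the equation interact with this grading: $\bigcirc=(a-a^{-1})/z$ acts as a scalar and preserves $V_n$; the boundary meridian $m$ is null-homotopic in $\mathbf{T}$, so its action also preserves each $V_n$; and the boundary longitude $\ell$ increases winding by $1$. Writing $\mathbf{E}^\gamma = \sum_{n\geq 0} e_n$ with $e_n \in V_n$ and $e_0 = 1$, equation \eqref{skein dilogarithm equation} decomposes component-by-component into
\begin{equation*}
(\bigcirc - m)\, e_n \;=\; \gamma\, \ell\, e_{n-1}, \qquad n \geq 0,
\end{equation*}
with $e_{-1}:=0$. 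The $n=0$ equation reads $(\bigcirc - m)\cdot 1 = 0$, which holds automatically because a boundary meridian bounds a disk inside $\mathbf{T}$, forcing $m\cdot 1 = \bigcirc \cdot 1$.

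The crux of the proof is therefore to establish that $(\bigcirc - m)$ acts invertibly on $V_n$ for every $n\geq 1$. Granted this, the $e_n$ are forced to be $e_n = \gamma\,(\bigcirc - m)^{-1}\,\ell\, e_{n-1}$, which simultaneously proves existence and uniqueness of $\mathbf{E}^\gamma$, and makes the construction completely constructive as a formal sum in $\widehat{Sk}^+(\mathbf{T})$.

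For the invertibility, I would use the classical description of $V_n$ as a free module with basis indexed by partitions $\lambda \vdash n$ (the Hecke-algebra / Turaev basis of the positive HOMFLYPT skein of the solid torus), on which the meridian operator $m$ is known to act diagonally by explicit scalars $c_\lambda \in \Z[a^{\pm}, q^{\pm 1/2}][\bigcirc]$. One then checks, using $a$ as a formal variable, that $c_\lambda \neq \bigcirc$ for every $\lambda \vdash n$ with $n \geq 1$, so that the diagonal operator $\bigcirc - m$ is invertible after at most a harmless localization of the scalar ring. This is the main obstacle: controlling these eigenvalues requires appealing to the representation theory underlying the skein of the solid torus. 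If one prefers to avoid an appeal to the full eigenvalue structure, an alternative is a filtration argument on $V_n$ by a secondary grading (e.g.\ number of connected components), in which $m$ has upper-triangular form with an invertible diagonal part — enough to solve the recursion order by order.
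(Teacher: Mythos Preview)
Your proposal is correct and follows essentially the same approach as the paper: use the homology-degree (winding number) $\mathbb{N}$-grading on $Sk^+(\mathbf{T})$, observe that $\ell$ raises degree while $(\bigcirc - m)$ preserves it and is diagonalized in the Aiston--Morton partition basis, and solve the resulting recursion. The paper's sketch is less detailed than yours but relies on exactly the same ingredients; note also that the needed invertibility of the eigenvalues $\bigcirc - c_\lambda$ reappears later in the paper (Section~8), where the relevant scalars are explicitly inverted.
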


The proof of the lemma is not difficult: 
the homology degree gives an $\mathbb{N}$-grading on $Sk^+(\mathbf{T})$, 
and there is a basis of $Sk^+(\mathbf{T})$ in which
$(\bigcirc - m)$ is diagonal, and $\ell$ is upper triangular \cite{Aiston-Morton}.  Thus there is a solution in the completion $\widehat{Sk}^+(\mathbf{T})$. 
In \cite{unknot}, the explicit
formula for $\mathbf{E}$ in this basis is derived.  We do not reproduce this formula here, as we  will never need to use it.

\begin{corollary}
$\mathbf{E}|_{Lk(\mathbf{t})} = \mathcal{E}_q$.     
\end{corollary}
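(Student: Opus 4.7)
The plan is to push the defining equation of Lemma \ref{unique solution for clifford} through the specialization $Sk(\mathbf{T}) \to Lk(\mathbf{T})$ and invoke the elementary uniqueness of $\mathcal{E}_q$ as the normalized power-series solution of its $q$-difference equation.

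First I would check that the specialization respects the relevant structures. The winding-number splitting $Sk(\mathbf{T}) = Sk^+(\mathbf{T}) \otimes Sk^-(\mathbf{T})$ descends to the analogous splitting of $Lk(\mathbf{T})$, so the completion $\widehat{Sk}^+(\mathbf{T})$ maps to $\widehat{Lk}^+(\mathbf{T}) \cong \Q_q[[x]]$, as described in the paragraph preceding Lemma \ref{unique solution for clifford}. Under this identification the skein elements $\ell, m \in Lk(\mathbb{T})$ act on $\Q_q[[x]]$ as multiplication by $x$ and the $q$-shift $f(x) \mapsto f(qx)$ respectively, and the unknot $\bigcirc$ specializes to $1$.

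Next I would apply the specialization to equation \eqref{skein dilogarithm equation}. With $\gamma = -a$ determined by the Clifford-torus operator \eqref{clifford torus operator}, the relation $(\bigcirc - m - \gamma\ell)\mathbf{E} = 0$ becomes, after the $a$-specialization, $(1 - m - \gamma'\ell)\mathbf{E}|_{Lk} = 0$ for some scalar $\gamma'$. Absorbing $\gamma'$ into the orientation and framing choice for $\ell$ (permissible since any rescaling of $\ell$ preserves the commutation $\ell m = q m \ell$ and the normalization of the meridian), this becomes precisely the $q$-exponential equation $(1 - m - \ell)\mathbf{E}|_{Lk} = 0$, equivalently $\partial^{(q)}_x \mathbf{E}|_{Lk} = \mathbf{E}|_{Lk}$. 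The normalization $\mathbf{E} = 1 + \cdots$ survives the specialization, yielding a power series in $\Q_q[[x]]$ with constant term $1$.

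Finally, the recursion imposed by $\partial^{(q)}_x f = f$ with $f(0) = 1$ determines all coefficients of $f$ uniquely, giving $f = \mathcal{E}_q$; hence $\mathbf{E}|_{Lk} = \mathcal{E}_q$. The one genuine subtlety, and really the only non-routine step, is the convention bookkeeping in the previous paragraph: matching the framing and orientation of $\ell$ to the specialization of $a$ so that the skein equation translates verbatim to the defining equation of the $q$-exponential. Modulo this bookkeeping, no further input is required.
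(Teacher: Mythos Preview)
Your proof is correct and follows the same approach as the paper's: specialize the defining skein equation using $\bigcirc = 1$ in $Lk$, then invoke the uniqueness of the normalized solution to the resulting $q$-difference equation. You are more explicit than the paper about the $\gamma$ bookkeeping (absorbing the specialized scalar into the framing of $\ell$), which the paper's one-line proof glosses over, but the substance is identical.
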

\begin{proof}
As $\bigcirc = 1 \in Lk$, the defining relation of
$\mathbf{E}$ specializes to that of $\mathcal{E}_q$, and
both relations have a unique solution.
\end{proof}

The definition of $\mathbf{E}$ depended on a choice of framed, oriented longitude of $\mathbb{T}$.  
Correspondigly, if $M$ is an oriented 3-manifold and $k \subset M$ is a framed, oriented knot, then we define 
$\mathbf{E}(k)$ by cutting out a neighborhood of $k$ and gluing in $\mathbf{E}$; similarly $\mathbf{E}^\gamma(k)$.

\section{A disk surgery cobordism} \label{sec: disk surgery cobordism} 

In Figure \ref{Fig:Disk-surgery}, we describe Legendrian disk surgery in a Darboux chart.  In this section we provide an exact Lagrangian cobordism whose negative end is the union of the pre-surgery Legendrian and a Clifford torus, and whose positive end is the post-surgery Legendrian.

\begin{figure}
    \begin{picture}(0,180)
    \put(-140,0){\includegraphics[width=12cm, trim=0cm 1cm 1.5cm 0cm, clip]{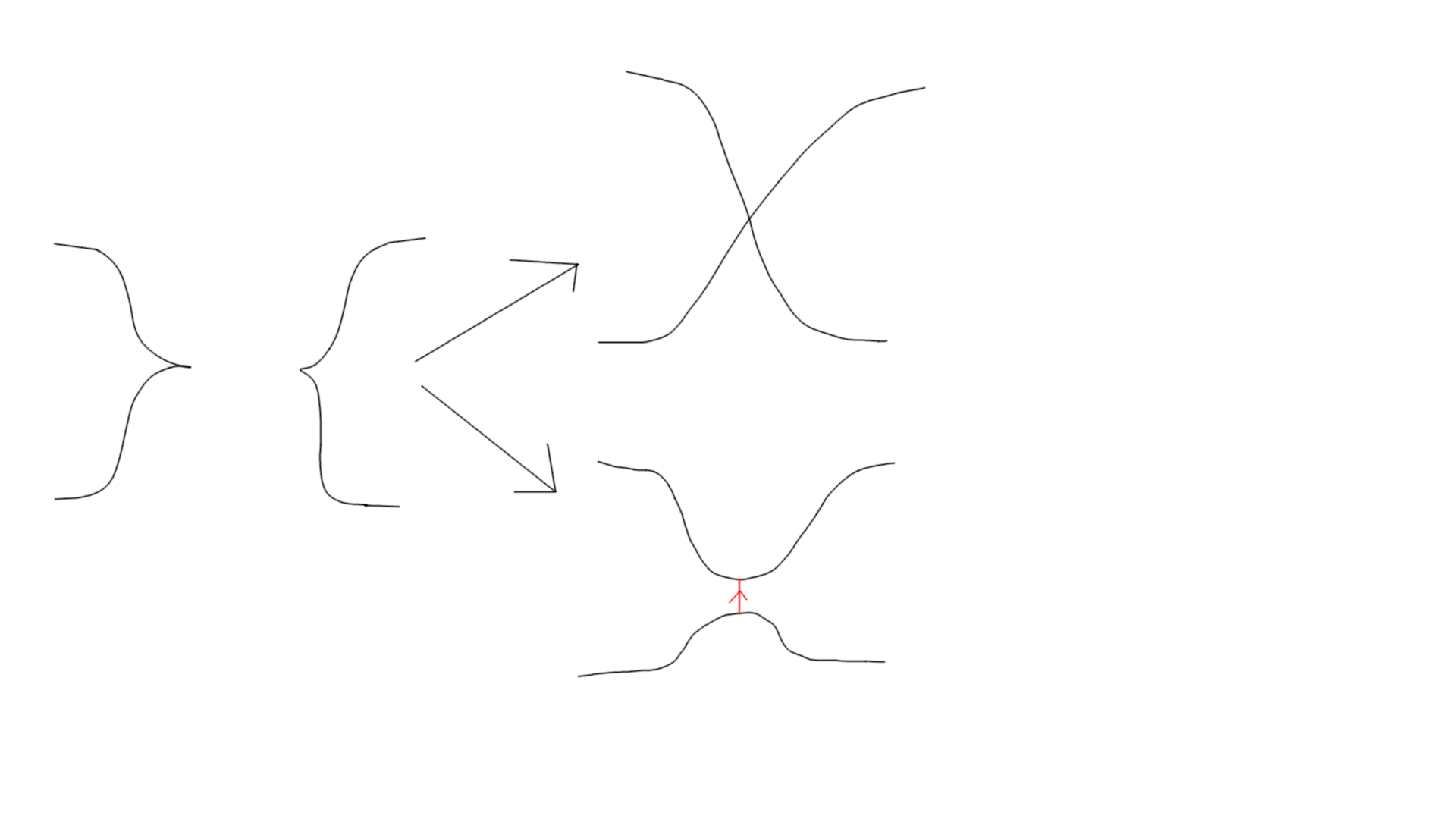}}
    \end{picture}
    \caption{Legendrian disk surgery is the spinning up of the upper front transition.  (The front of the disk is the interval connecting the two cusps on the LHS.) We contrast this with Legendrian ambient surgery of \cite{Rizell-Surgery}, which is the spinning up of the lower front transition. Indicated in red is an index-$0$ Reeb chord.} 
    \label{Fig:Disk-surgery}
\end{figure}

In a symplectization $V \times \R$, we consider only Lagrangian cobordisms which are eventually cylindrical on both ends.  By the {\em length} of the cobordism, we mean the length of the complement in $\R$ of the cylindrical region.  

As building blocks, we will use the 
`Legendrian ambient surgery cobordisms' of Dimitroglou-Rizell \cite[Definition 4.6]{Rizell-Surgery}.  These also depend on the choice of an isotropic disk, but let us be clear that the ambient surgery in the critical case is not the same as what we call Legendrian disk surgery here.  The difference is illustrated in  Figure \ref{Fig:Disk-surgery}.

\begin{figure}
    \begin{picture}(0,180)
    \put(-150,0){\includegraphics[width=12cm, trim=0cm 0cm 0cm 0cm, clip]{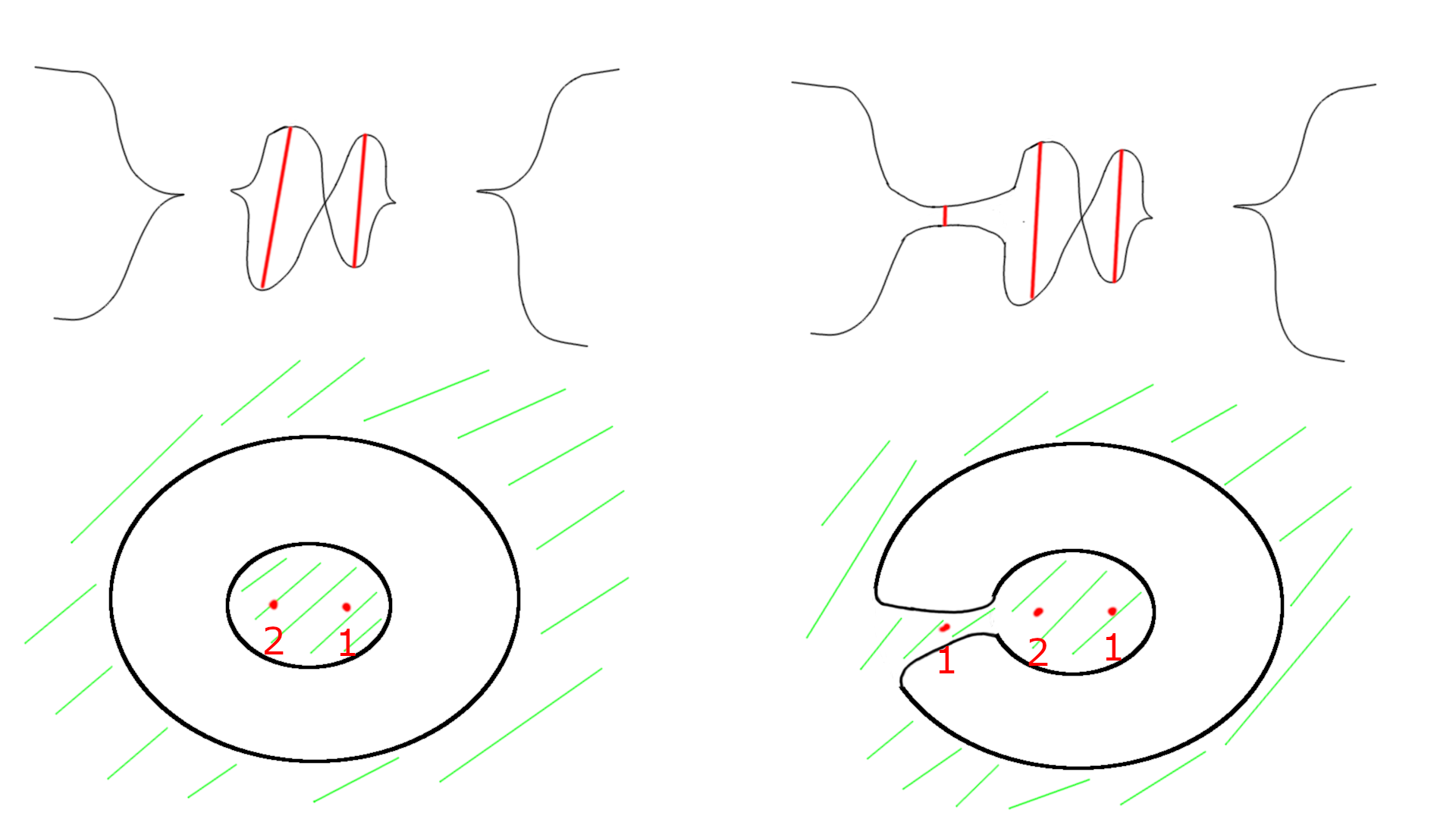}}
    
    \end{picture}
    \caption{On the left $\Lambda \sqcup \T$ and on the right $\Lambda \#_{p=q} \T$ (joining them by a Legendrian $1$-handle). 
    The upper pictures are a slice of the front 
    projections.  The lower pictures are the view from above (in dimension $n=2$): 
    the green-shaded area is the double-covered area and the non-shaded area has no Legendrians above them, and the black lines are the cusp edges. 
    Indicated in red are the Reeb chords together with their respective indices.
    Note that we have perturbed $\T$ away from being rotation-symmetric, to make it front generic. 
    } 
    \label{Fig:Surgery-1}
\end{figure}

\begin{construction} \label{Con:Basic cobordism}
    Let $Y$ be a contact manifold, $\Lambda \subset Y$ a Legendrian of dimension $n$, and $D$ a Legendrian disk ending on $\Lambda$.  Assume given a disk surgery setup as in Figure \ref{Fig:Disk-surgery}. 

    Then for any $\epsilon > 0$, there is a choice of  $\T=S^1 \times S^{n-1}$ as in 
    Figure \ref{Fig:Surgery-1} and a Lagrangian cobordism $\tilde{L}$  of length $ < \epsilon$ with negative end $\Lambda \sqcup \T$ and positive end $\Lambda_D$.  
    When $n = 2$, then $\T$ is the Clifford torus. 

    The cobordism is trivial away from a neighborhood of $\R \times D$, and is obtained from a composition of  
    \cite[Definition 4.6]{Rizell-Surgery} ambient surgeries of index $0$ and $n-1$, and a ``Legendrian isotopy cobordism".
\end{construction}
\begin{proof}
    The cobordism is the concatenation of the following three pieces, described from the negative end to the positive end. 

    \begin{enumerate}
        \item     We start by joining $\Lambda$ and $\T$ with ambient index-$0$ surgery.  The result will be 
        an exact Lagrangian cobordism with negative end $\Lambda \sqcup \T$ and positive end $\Lambda \#_{p=q} \mathbb{T}$ which is cylindrical outside the Darboux chart. Topologically this cobordism is just the trivial cylinder above $\Lambda \sqcup \T$ with a 1-handle attached.
        
        To do so, we pick a point $p$ along the cusp-edge of $\Lambda$ and a point $q$ along the cusp-edge of $\T$ and join it along a $1$-disc inside $D$. The resulting Legendrian $\Lambda \#_{p=q} \mathbb{T}$ is depicted on the right of Figure \ref{Fig:Surgery-1}. 
        As computed by Rizell \cite[Subsection 4.2.3]{Rizell-Surgery}, the index $\le 1$ 
        Reeb chords at the positive end are the same as those at the negative end, together, when $n=2$, with a single index-$(n-1)$ Reeb chord which sits at the minimum of the handle we just attached. (The restriction to index $\le 1$ here is because we only demanded that index $\le 1$ Reeb chords avoid the surgery disk.)

        \item    The next piece of the cobordism is an index-$(n-1)$ Legendrian ambient surgery. This time the ambient surgery disk has boundary the newly connected cusp edge $\Lambda \#_{p=q} \mathbb{T}$ (the disk projects to the bounded white region in Figure \ref{Fig:Surgery-1} bottom-right).  The positive end of the cobordism is the left picture of Figure \ref{Fig:Surgery-2}. There is a new Reeb chord of index-$0$, see \cite[Subsection 4.2.3.]{Rizell-Surgery}. The cobordism is topologically  an $n$-handle attachment to the trivial cylinder  $(\Lambda \#_{p=q} \mathbb{T} \times \R)$.

        \item    Finally, a standard construction  shows that inside a Darboux chart, a Legendrian isotopy (relative to the boundary of $B$) gives rise to an exact Lagrangian cobordism, see for instance \cite[Section 6.1]{Ekholm-Honda-Kalman}.  In the Darboux chart $B$ the Legendrian $\Lambda'_D$ is symmetric w.r.t. to the surgery locus $D$. As such (away from the front cone singularity) we can write $\Lambda'_D$ as the image of $j^1(f)$ and $j^1(-f)$. We take the Legendrian isotopy which is obtained by interpolating between $j^1(f)$ and $j^1(g)$ where $g$ is a linear function of non-zero incline except at the boundary where we round the corners so we obtain a smooth Legendrian. This is depicted on the right of Figure \ref{Fig:Surgery-2}.

    To obtain a Lagrangian cobordism between $\Lambda'_D$ and the end of this Legendrian isotopy $\Lambda_D$ we have to take a deformation of its trace. So let $\Lambda_D(s,t)$ be the Legendrian isotopy where $s$ is the coordinate on $\Lambda_D$ and $t$ the coordinate on $[0,1]$. Furthermore, let $h:\R \rightarrow [0,1]$ be a smooth non-decreasing function. Then we define an exact Lagrangian through the image of 
    $$(s,t) \mapsto (\Pi_\C(\Lambda_D(s,h(t)),t,z(\Lambda_D(s,h(t)) + \alpha(\frac{d}{dt}(\Lambda_D(s,t))) $$
    which is  a deformation of the $h$-scaled trace of the isotopy in the Reeb direction. In particular, $h$ controls the size of the Lagrangian deformation by its support. 
    
    Note, however that if the derivatives of $h$ are too large then this Lagrangian is immersed as double points may appear.  Thus, for fixed initial and final Legendrian, the isotopy cobordism has length bounded below. 
\end{enumerate}

    Now that we have described the basic pieces of the cobordism we need to argue that this composition can be made arbitrarily small. 
    
    The length of the surgery pieces can be made arbitrarily small.  Indeed, in \cite[Section 4.2.2.]{Rizell-Surgery} there is a  quantity $\delta>0$ which controls both the length of the surgery cobordism (by $[1-\delta^{1/3},1+\delta^{1/3}]$) and the length of the new Reeb chords by $2 \delta^{3/2} (\frac{1}{2}))^{3/2}$. 
    
    It remains to consider the isotopy cobordism. As explained above, to make this cobordism arbitrarily short, we must make the ratio between the Reeb chords of $\T$ and the Reeb chords introduced by the Legendrian surgery arbitrarily close to $1$. 
    
    Rizell's construction depends on a function $\rho_\delta: \R \to \R$, 
    which roughly will control how close to the cusp edges one begins the modification, and how flat the front projection of the positive end Legendrian will be.  

    We illustrate how to modify his construction as we require in the case when the dimension of the Legendrian is $1$ and we are performing an index $0$ surgery; the general case is similar. 

    In \cite{Rizell-Surgery}, the coordinates on the Darboux chart around the surgery disk are chosen such that the Legendrian $\Lambda$ has a cusp edge on $\{-1,1\}$ and is thus locally given by both branches of $\pm (x^2-1)^{3/2}$ where the function inside is non-negative.  We require this description holds on a chart containing some fixed neighborhood of $[-1, 1]$, let us say $[-1-\delta(\frac{1}{2} + \delta'), 1+\delta(\frac{1}{2} + \delta')]$ where $\delta'$ is some small number (in \cite{Rizell-Surgery},  $\delta'=\frac{1}{6}$). The positive end of the cobordism $\Lambda'$ is given by $\pm (x^2+\rho(x^2)-1)^{3/2}$ where $\rho$ is a non-negative function fulfilling:

    \begin{enumerate}
        \item $\rho(0)=(1+ \frac{1}{2}\delta )$ and $\rho'(0)=0$
        \item $\rho(x^2)=0$ if $x^2 \geq 1+\delta(\frac{1}{2} + \delta')$
        \item 
        $x^2 +  \rho(x^2)$ is strictly convex.
    \end{enumerate}

Given such a function $\rho$, the 
Reeb chords for the lift of $\pm (x^2+\lambda\rho(x^2)-1)^{3/2}$ are only possible above $0$: from the $\pm$ we see the front is symmetric about the zero section, so Reeb chords can only appear if the first derivative of the function vanishes. By the third condition, for $\lambda$ close enough to $1$, this happens exactly once, namely when $x=0$. The Reeb chord length is dictated by $\rho(0)$.

We will now argue that such $\rho$ exists for any choice of $\delta' > 0$.  The essential constraint imposed by the above conditions on $\rho$ is that 
$x^2 + \rho(x^2)$ at $1+ \frac{1}{2}\delta $ 
must be greater than $\rho(0)^2$.  This means 
that $\rho((1+\frac{\delta}{2})^2) > 0$. However, its value at this point may be arbitrarily small and its support may be any interval strictly including $[-1-\frac{\delta}{2},1+\frac{\delta}{2}]$. In particular, $\delta'$ can be chosen to be any positive number. 

Thus by choosing $\delta_0$ and $\delta_{n-1}$ small enough we obtain a bound on the length of the surgery cobordisms and by adjusting the corresponding functions $\rho_0, \rho_{n-1}$ we can make our initial choice of $\T$ such that the pairs of Reeb chords have length which has ratio arbitrarily close to $1$ thus allowing us to make the Legendrian isotopy arbitrarily short.
\end{proof}

\begin{figure}
    \begin{picture}(100,185)
    \put(-150,0){\includegraphics[width=12cm, trim=0cm 0cm 0cm 0cm, clip]{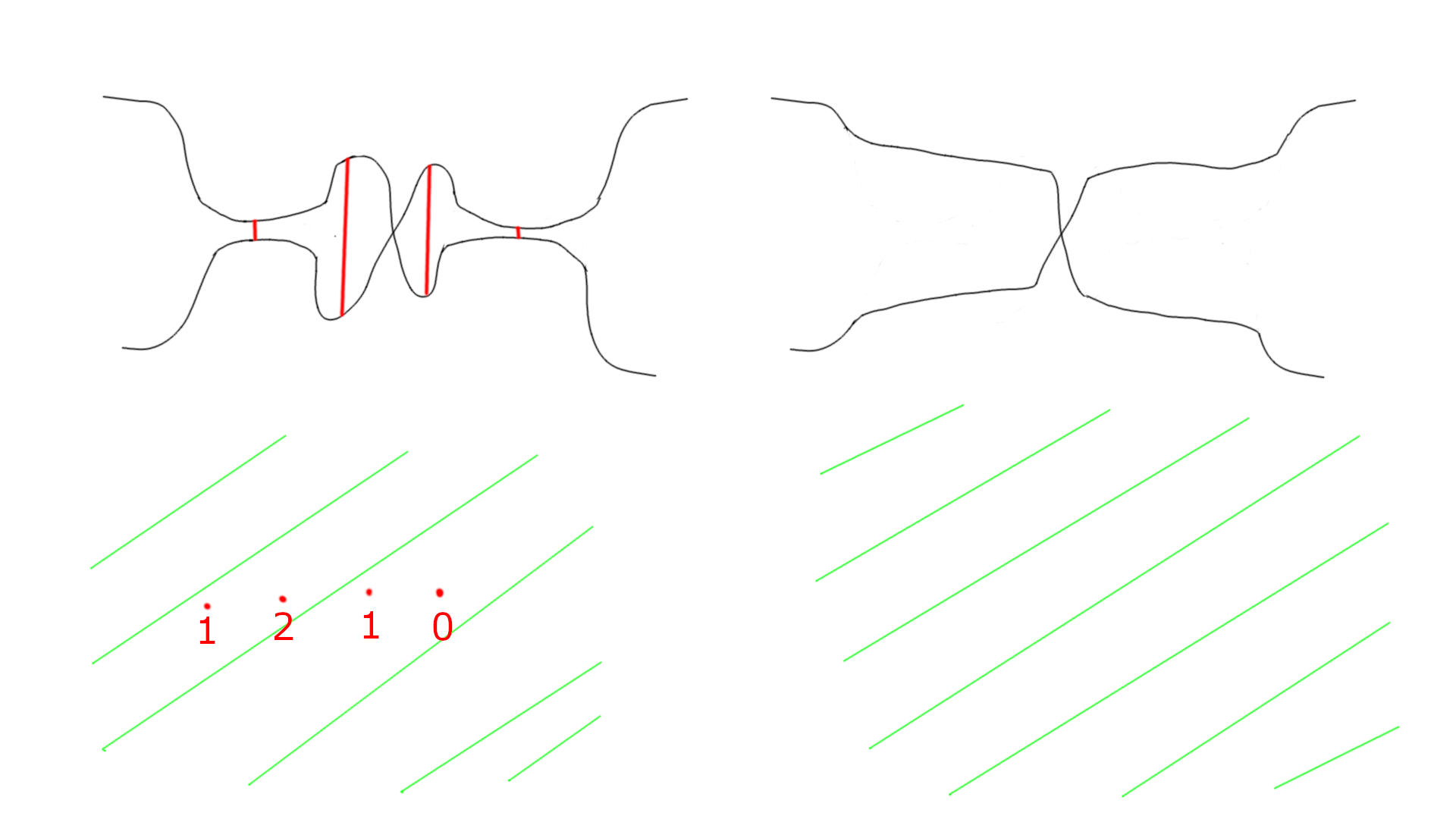}}
    
    \end{picture}
    \caption{On the left the Legendrian $\Lambda$ after index $0$ and index $1$ surgery. In the picture one sees that now there are two annuli $A_1$ and $A_2$ with small height difference join $T_{Cl}$ and $\Lambda$. To obtain the final position, depicted on the right, one increases the height difference of the annuli until both Reeb chord pairs vanish.}  
    \label{Fig:Surgery-2}
\end{figure}

We characterize how the cobordism relates, topologically speaking, to a trivial cylinder. 

\begin{lemma} \label{Topological filling possible}
    Let $m$ be any embedding of $S^{n-1} \hookrightarrow \T$ which intersects
    the cusp edge of the front projection transversely and exactly once.  
    Let $\mathbf{T}$ be a filling of $\T$ with topology $B^2\times S^{n-1}$ such that $m$ becomes contractible.  Then $\tilde{L} \cup_{\T_{Cl}} \mathbf{T}$ is topologically $\Lambda \times \R$.
\end{lemma}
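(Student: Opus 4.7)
My strategy is to show that, after gluing $\mathbf{T}$, the ``extra piece'' of $\tilde{L} \cup_\T \mathbf{T}$ beyond the trivial cylinder $\Lambda \times [0, 1]$ is topologically an $(n+1)$-ball attached along a disk on its boundary.  Since boundary connected sum with a ball preserves diffeomorphism type, i.e.\ $(\Lambda \times [0, 1]) \natural B^{n+1} \cong \Lambda \times [0, 1]$, this will yield the claim (after including the cylindrical ends). By Construction \ref{Con:Basic cobordism}, $\tilde{L}$ is topologically $(\Lambda \sqcup \T) \times [0, 1]$ with a $1$-handle $h^1$ attached (joining $\Lambda$ and $\T$ at cusp points $p$ and $q$) and an $n$-handle $h^n$ attached along the ``newly connected cusp edge'' $\gamma$ on the intermediate surface $\Sigma_1 := \Lambda \#_{p=q} \T$; the Legendrian isotopy cobordism is topologically trivial.

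The first step is to identify the class of $\gamma|_\T$. Since by hypothesis $m$ intersects the cusp edge $c_\T$ of $\T$ transversely in a single point, $[m]$ and $[c_\T]$ are Poincar\'e-dual generators of $H_*(\T)$. The hypothesis that $m$ bounds a disk in $\mathbf{T}$ identifies $m$ with the meridian of $\mathbf{T} = B^2 \times S^{n-1}$, so consequently $c_\T$ represents the longitude class $\ell$ of $\mathbf{T}$ (the generator of $\pi_{n-1}(\mathbf{T})$, i.e., the class of the core $S^{n-1}$). Since the $\T$-portion of $\gamma$ traces essentially all of $c_\T$, we obtain $[\gamma|_\T] = \ell$.

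The core computation is then as follows. Let $B_\Lambda \subset \Lambda \times [0, 1]$ be a small half-ball neighborhood of the $\Lambda$-foot of $h^1$.  Absorbing $h^1$ into the boundary connected sum $B_\Lambda \natural \mathbf{T}$: since $B_\Lambda \cong B^{n+1}$, this is diffeomorphic to $\mathbf{T}$, and the $\Lambda$-side arcs of $\gamma$ become null-homotopic inside $\pa B_\Lambda \cong S^n$.  The attaching sphere of $h^n$ therefore transfers to a sphere on $\pa \mathbf{T} = \T$ representing $\ell$. Attaching the $n$-handle along $\ell$ kills the generator of $\pi_{n-1}(\mathbf{T})$, producing a simply connected compact $(n+1)$-manifold.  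A direct computation (cutting $\pa \mathbf{T}$ along the tubular neighborhood of $\ell$ and gluing in two $D^n$'s) shows that the new boundary is $S^n$; by the topological Poincar\'e conjecture, this is $B^{n+1}$.  Hence the ``extra piece'' $N := B_\Lambda \cup h^1 \cup \mathbf{T} \cup h^n$ is a ball attached to $\Lambda \times [0, 1]$ along a disk, and $\tilde{L} \cup_\T \mathbf{T} \cong (\Lambda \times [0, 1]) \natural B^{n+1} \cong \Lambda \times [0, 1]$, which extends to $\Lambda \times \R$ under the cylindrical ends.

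The main obstacle is the absorption step, namely the rigorous transfer of $\gamma$'s $\Lambda$-side arcs to a null-homotopy inside $B_\Lambda$.  Concretely, one must exhibit an explicit ambient isotopy, or a handle slide, moving the attaching sphere of $h^n$ down through $h^1$ so that, after absorption, it lies on $\pa \mathbf{T}$ as a representative of $\ell$.  This requires careful local analysis of how the two $\Lambda$-side arcs of $\gamma$ traverse the 1-handle neck (Figure \ref{Fig:Surgery-1}) and of their trivialization inside $B_\Lambda$.
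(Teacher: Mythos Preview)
Your approach is correct in spirit but takes a more roundabout route than the paper's.  The paper argues directly via Morse theory and handle cancellation: the projection to the symplectization coordinate gives a Morse function on $\tilde{L}$ with exactly two critical points, of index $1$ and $n$ (coming from the index-$0$ and index-$(n-1)$ ambient surgeries, per \cite[Remark 4.7.5]{Rizell-Surgery}); independently, $\mathbf{T}\cong B^2\times S^{n-1}$ carries a Morse function with one index-$0$ and one index-$(n-1)$ critical point, and its index-$(n-1)$ belt sphere is homotopic to $m$.  Gluing, the index-$0$/index-$1$ pair cancels for dimensional reasons, and the index-$(n-1)$/index-$n$ pair cancels precisely because the belt sphere $m$ meets the attaching sphere of the $n$-handle (the newly connected cusp edge, which restricts on $\T$ to $c_\T$) transversely once---this is exactly the hypothesis.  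After both cancellations no critical points remain, so the glued cobordism is the trivial cylinder.

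Your argument reaches the same conclusion by isolating the ``extra piece'' $N$ and showing it is a ball, but the detour through the Poincar\'e conjecture is unnecessary: the same pairwise handle cancellations (0-handle of $\mathbf{T}$ with $h^1$; $(n-1)$-handle of $\mathbf{T}$ with $h^n$) already exhibit $N$ as a ball.  More importantly, your ``main obstacle''---the absorption/isotopy step transferring the $\Lambda$-side arcs of $\gamma$ across $h^1$---is exactly what the Morse-theoretic framing avoids.  In the paper's setup one never isotopes $\gamma$; one simply checks the single transverse intersection of attaching sphere with belt sphere, and this is literally the hypothesis $\#(m\cap c_\T)=1$, since $m\subset\T$ so $m\cap\gamma=m\cap(\gamma\cap\T)=m\cap c_\T$.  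Recasting your proof in these terms would eliminate the step you flagged as incomplete.
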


\begin{figure}
    \begin{picture}(100,150)
    \put(-50,0){\includegraphics[width=8cm, trim=0cm 5cm 10cm 0cm, clip]{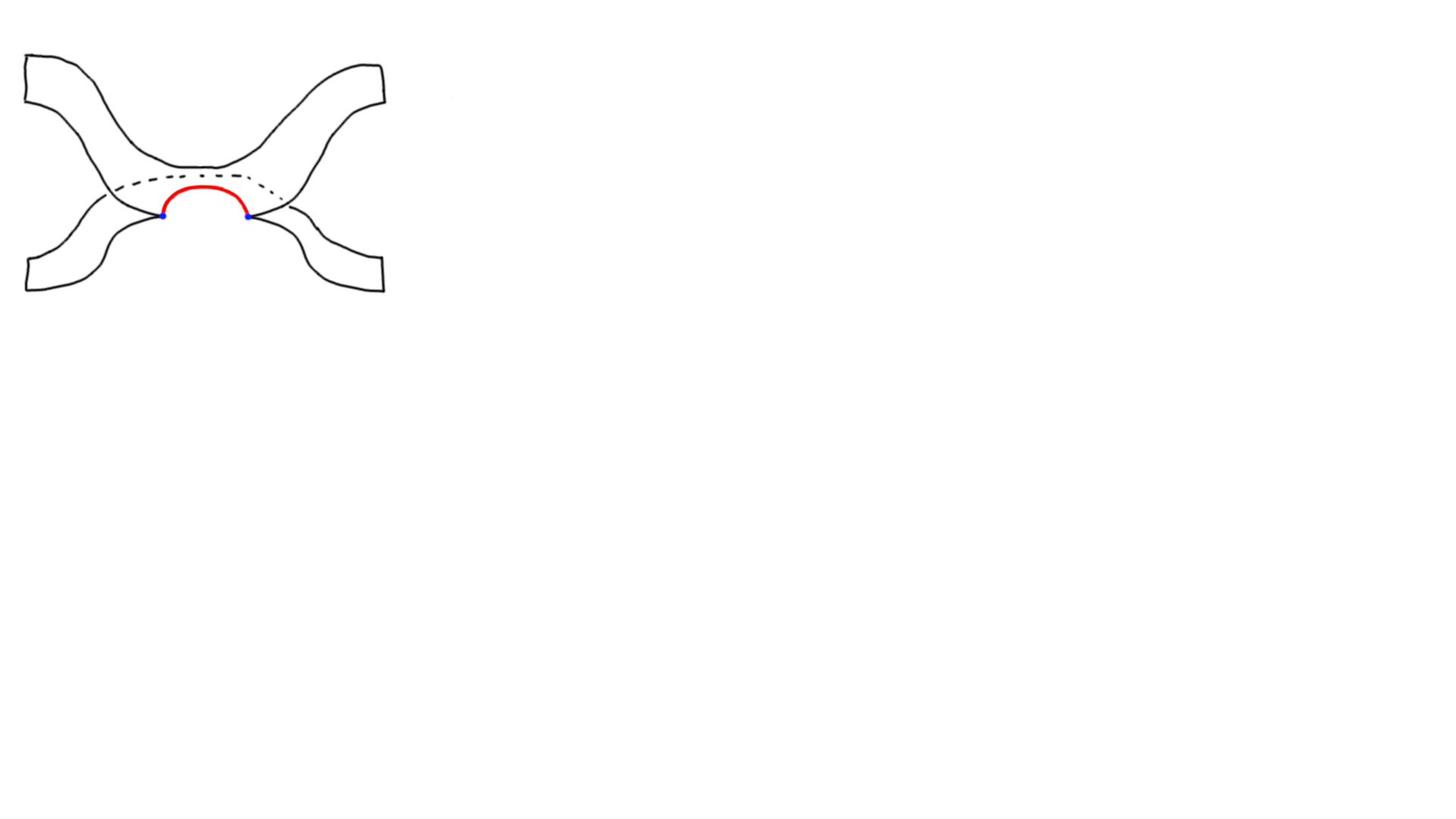}}
    
    \end{picture}
    \caption{A small piece of the index $0$ ambient surgery cobordism for $n=1$. Indicated in red is the core and in blue the attaching sphere of the corresponding $1$-handle. Copied from \cite[Figure 2]{Rizell-Surgery}.}  
    \label{Fig:Attaching-sphere}
\end{figure}

\begin{proof}
    We will describe Morse functions and cancel handles. First, we will choose an appropriate Morse function on $\tilde{L}$. Consider the function $f$ induced by the projection onto the symplectisation variable $t$. As was shown by Rizell \cite[Remark 4.7.5]{Rizell-Surgery} this function when restricted to stages $1$ and $2$ has a singularity only at the base point of the handle attachments which are automatically Morse whose index is $k+1$ where $k$ is the index of the Legendrian surgery. So it only remains to note that stage $3$ of the cobordism which comes from a Lagrangian of the the form:
    \begin{align*}
        (\Pi_\C(\Lambda(s,h(t))),t,z(\Lambda(s,h(t))) + \alpha(\frac{d}{dt}\Lambda(s,h(t))))
    \end{align*}
    has non-degenerate projection onto the $3$rd slot and thus $-f$ has no singularities. 

    We illustrate in Figure \ref{Fig:Attaching-sphere}  the core disc of a handle attachment for an index $0$ ambient surgery on a Legendrian knot. 
    In any dimension, the front of the core disk is a cusp edge. 

    On a filling $\mathbf{T}_m$ of $\T$ with topology of $B^2 \times S^{n-1}$, there's a Morse function which has exactly one Morse singularity of index $(n-1)$ and exactly one Morse singularity of index $0$ and no others, and whose gradient points strictly outward at the boundary. The belt sphere of the $(n-1)$-handle is homotopic to $m$.
    
    Now, we wish to glue in the filling such that the glued Morse functions have cancellable singularities. Recall, that this is possible if the belt sphere of an index $i$ and the attaching sphere the index $i+1$ have one transverse intersection. For dimension reasons, this is automatic in the case of the index $0$ singularity of $\mathbf{T}_m$ and the index $1$ singularity of $\tilde{L}$. For the index $(n-1)$ and index $n$ singularities, this was guaranteed by hypothesis. 
\end{proof}

\section{Skein valued cluster transformation from surgery cobordism} \label{what perfect cobordisms do}

In this section, we identify properties of the holomorphic curve theory of a disk surgery cobordism which will ensure that the skein-valued augmentation ideals of the Legendrians at the ends are related by skein-valued cluster transformation. 

First let us recall some details of the setup of the skein-valued curve counting. 
The general setting is that of a pair $(X, L)$, where $X$ is a symplectic Calabi-Yau of real dimension 6, and $L$ is a Maslov zero Lagrangian.  In the noncompact case, we demand that $X$ has convex end given by a contact manifold  
In addition, we require $L$ to carry a vector field a 4-chain $K \subset X$ with $\partial K = 2L$.  In particular, $K$ is a cycle in 
$H_4(X, \Z/2\Z)$, which we will use as a background class for defining signs; correspondingly $L$ should carry a spin structure twisted by $K$, as in \cite{Scharitzer-Shende}.  (Using $K$ as a background class is not necessary to set up the theory, but gives in the present context formulas which are more symmetric and match those of \cite{SSZ}.) 

Following \cite{SOB, ghost, bare}, we count holomorphic curves in $X$ with boundary on $L$ by the class of their boundary in $Sk(L, K^\circ \cap L) \otimes \Z[[H_2(X, L)]]$, where the power series go in the direction of positive symplectic area.  Here, $K^\circ$ is the interior of $K$, and the decoration $K^\circ \cap L$ indicates that we use a slightly twisted version of the skein, where crossing this 1-manifold multiplies skein elements by $(-a)^{\pm 1}$, 
as explained in \cite[Sec. 2, Sec. 6]{Scharitzer-Shende}. Henceforth we omit the decoration $K \cap L$; it should always be understood. 

In all examples of interest in the present article, the symplectic form is exact, $\omega = d\lambda$.  Thus the symplectic area of a holomorphic curve can be read off the $H_1(L)$ class of its boundary, hence already off the skein itself, and we may work instead in the completion of the skein along $\lambda$-positive classes, which we denote $\widehat{Sk}_\lambda(L)$.  We use the same notation for the corresponding completion of the skein of any 3-manifold equipped with a 1-form $\lambda$. 

We will allow noncompact $X$ which at infinity have ends which are cylindrical on contact manifolds; throughout we work in a setting where the concave ends have no Reeb orbits of index $\le 0$, so we do not have to worry about escape of curves. 
We correspondingly allow noncompact Lagrangians $L$ which are asymptotic to a Legendrian $\partial_\infty L \subset \partial_\infty X$.  In such cases we require the various brane data to be cylindrical at infinity: in a trivialization near infinity
$L \sim \partial_\infty L \times \R$, 
the vector field $v$ should point along the $\R$ factor, and the $4$-chain should be $\R$-invariant.  In particular, $\partial_\infty K$ gives a 3-chain with $\partial(\partial_\infty K) = 2 \partial_\infty L$. 

Let us consider the situation where $X$ is a symplectization $\R \times V$ and $\Lambda \subset V$ is a Legendrian.  As always, moduli spaces of curves
ending on $\R \times \Lambda $ have an $\R$-action; 
we take the quotient by it and study the zero-dimensional moduli spaces.  
The boundaries of such curves live most naturally in a skein of $\Lambda \times \R$ on tangles which, near $\pm\infty$ in $\R$, are straight lines going to the endpoints of the Reeb chords, with the appropriate orientation.  We write 
$\mathbf{A}(\Lambda)$ for the resulting skein element. 
The situation for counting curves in cobordisms is analogous, save that now there is no $\R$-action.

Recall that we say $(V, \Lambda)$ is Reeb-positive when $V$ has no index $\le 1$ Reeb orbits, and $\Lambda$ has no index $\le 1$ Reeb chords.  In this case we write $R_1(\Lambda)$ for the set of index-1 Reeb chords. 
In the Reeb positive case, the only rigid (up to translation) curves for $(\R \times V, \R \times \Lambda)$ are curves with one positive puncture at some chord in $R_1(\Lambda)$.  

Suppose now $(X, L)$ is a filling of $(V, \Lambda)$.  Under the Reeb-positivity hypothesis.  Then: 

\begin{lemma} \label{wave function} (\cite{unknot}, \cite[Lemma 3.4]{Scharitzer-Shende})
$\mathbf{A}(\Lambda) \Psi(L) = 0$.      
\end{lemma}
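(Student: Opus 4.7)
The plan is to carry out the standard SFT neck-stretching / compactness argument in the skein-valued setting, exactly following the template of \cite{unknot} and \cite[Lemma 3.4]{Scharitzer-Shende}. The starting point is to consider, for each index-$1$ Reeb chord $\rho \in R_1(\Lambda)$, the moduli space $\Mm_\rho(X,L)$ of holomorphic maps from punctured Riemann surfaces into $X$, with Lagrangian boundary on $L$, and exactly one positive puncture asymptotic to $\rho$. By the Reeb-positivity hypothesis, the index formula gives expected dimension $1$, and the absence of low-index orbits and low-index chords rules out escape of curves to the concave end as well as the appearance of orbit curves or of additional positive punctures in any SFT limit.

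Next I would analyze the SFT compactification of $\Mm_\rho(X,L)$. The only breakings available are along the contact boundary $V$: the top level is a rigid (after translation) curve in the symplectization $\R\times V$ with its single positive puncture at $\rho$ and some negative punctures at index-$1$ chords, while the bottom level is a collection of rigid curves in $X$ with boundary on $L$, whose positive punctures match the top-level negative punctures. Read at the level of skein classes of boundaries, summed over $\rho$, the boundary configurations of all these $1$-manifolds assemble precisely into the composition $\mathbf{A}(\Lambda)\cdot \Psi(L)$, where the multiplication is the skein action of $Sk(\Lambda\times\R)$ on $Sk(L)$ by stacking in the cylindrical end. Since the boundary of a compact oriented $1$-manifold counts to zero, we conclude $\mathbf{A}(\Lambda)\Psi(L)=0$.

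The main obstacle, and the reason this is not a formal one-line argument, is the treatment of the codimension-$2$ strata that are absorbed into skein relations rather than contributing honest boundary points. These are the loci where two boundary branches become tangent, where an interior node develops, where boundary arcs cross transversely, and where boundary evaluation points collide. Handling these consistently is what forces the passage to the HOMFLYPT skein with the $4$-chain $K$ as background class and with the twisted coefficients $(-a)^{\pm1}$ along $K^\circ\cap L$; once this framework of \cite{SOB,bare,ghost,Scharitzer-Shende} is in place, all such codimension-$2$ contributions either cancel in pairs or are subsumed by the skein and framing relations of Figure \ref{HOMFLYPT skein}. Thus the proof is really a citation: verify that the present Reeb-positive $(X,L)$ satisfies the hypotheses under which those foundational results apply, and then invoke them to conclude that the signed skein-valued boundary count of $\Mm_\rho(X,L)$, summed over $\rho\in R_1(\Lambda)$, vanishes.
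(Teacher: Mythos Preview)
Your proposal is correct and follows exactly the approach the paper indicates: the paper itself simply cites \cite{unknot} and \cite[Lemma 3.4]{Scharitzer-Shende} for this lemma, and the sketch given in the proof of Lemma~\ref{Lem:General-breaking} (study the one-dimensional moduli of curves with one positive puncture at $\rho$, whose boundary consists of SFT breakings giving the equation plus boundary breakings that vanish modulo skein relations) is precisely what you have written out. One minor point of phrasing: the equation is really one for each $\rho\in R_1(\Lambda)$ rather than a single equation obtained by summing over $\rho$, but this does not affect the substance of your argument.
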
 

We recall the consequence for exact fillings: 

\begin{corollary}
     \label{necklace annihilates}
    Let $L$ be an exact filling for $\Lambda$.  
    Fix an index one Reeb chord $\rho$ of $\Lambda$. 
    Under the natural map 
    $Sk(\Lambda) \to Sk(L_g)$, the element $A(\rho)$  is sent to zero.  
\end{corollary}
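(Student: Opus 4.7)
The plan is to derive this as an immediate consequence of Lemma \ref{wave function}: exactness of the filling will force $\Psi(L) = 1$, at which point the relation $\mathbf{A}(\Lambda,\rho) \cdot \Psi(L) = 0$ reads exactly as the vanishing of the image of $\mathbf{A}(\Lambda,\rho)$ under the collar-inclusion map.

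First I would verify that $\Psi(L) = 1 \in \widehat{Sk}_\lambda(L)$. Write $\lambda|_L = df$ for a primitive $f$. Any rigid holomorphic curve $u$ contributing to $\Psi(L)$ has closed boundary components on $L$ and, possibly, negative punctures at Reeb orbits $\gamma_i$ of the convex end at infinity, so its symplectic area is
\[\int u^*\omega \;=\; \int_{\partial u} df \;-\; \sum_i \int_{\gamma_i} \lambda \;=\; -\sum_i \operatorname{length}(\gamma_i) \;\leq\; 0.\]
Positivity of area for nonconstant holomorphic curves forces $u$ to be constant, so only the empty configuration contributes and $\Psi(L) = 1$.

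Then Lemma \ref{wave function} specialized at $\rho$ gives $\mathbf{A}(\Lambda,\rho) \cdot 1 = 0$ in $\widehat{Sk}_\lambda(L)$. The skein action $Sk(\Lambda \times \R) \circlearrowright Sk(L)$ pushes tangles into the collar neighborhood of $\partial L = \Lambda$, and its action on the unit $1 \in Sk(L)$ is by definition the natural collar-inclusion map $Sk(\Lambda \times \R) \to Sk(L)$. Hence $\mathbf{A}(\Lambda,\rho)$ is sent to zero by this map, as claimed.

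The argument is essentially definition-chasing, so there is no genuine obstacle. The only thing to be careful about is the convention for $\Psi(L)$—in particular, confirming that the only contributions have no positive punctures and that all components of the boundary lying on $L$ are closed—but either way, the area-exactness computation uniformly rules out all nontrivial contributions.
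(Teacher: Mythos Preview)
Your proof is correct and follows essentially the same approach as the paper: use exactness to see $\Psi(L)=1$, then invoke Lemma \ref{wave function}. One minor quibble: a filling has only a convex (positive) end, so there are no negative punctures to worry about, and in any case $\Psi(L)$ counts compact curves without punctures; your area argument then simplifies to $\int u^*\omega = \int_{\partial u} df = 0$, which is exactly the paper's ``an exact filling bounds no compact curves.''
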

\begin{proof}
    An exact filling bounds no compact curves, so the curve count in the filling is $\Psi=1$.  On the other hand, 
    $A(\rho) \Psi = 0$. 
\end{proof}

\begin{remark}
    When writing the equation $\mathbf{A}(\Lambda) \Psi(L) = 0$, it is relatively harmless to choose capping paths for $\mathbf{A}(\Lambda)$, since they effectively appear on the {\em left} of the expression
$\mathbf{A}(\Lambda) \Psi(L)$, and so do not interfere with the equation.  We often do this without further
comment, and regard $\mathbf{A}(\Lambda)$ as lying in the usual the usual skein of 
$\R \times \Lambda$, where curves may not go to infinity. Indeed, we did this already in writing the expression for $\mathbf{A}(\T_{Cl})$ in Equation \eqref{clifford torus operator}.

However, when we want to compose cobordisms, it is not appropriate to choose capping paths, hence below we leave our elements in skeins of tangles.
\end{remark}

The corresponding statement for cobordisms is the following: 

\begin{lemma} \label{Lem:General-breaking} 
Assume $(X, L)$ is a Reeb-positive exact Liouville cobordism, equipped with appropriate brane data.  
Given $\rho \in R_1(\partial_+ L)$ and $\nu \in R_1 (\partial_- L)$,
denote by $\Psi_{X, L}(\rho, \nu)$ the skein-valued count of curves with one positive puncture asymptotic to $\rho$ and one negative
puncture asymptotic to $\nu$.  Then

$$\mathbf{A}(\partial_+ L, {\rho}) = \sum_{\nu \in  R_1 (\partial_- L)} \Psi_{W, L}(\rho, \nu)  \cdot  \mathbf{A}(\partial_- L, {\nu})$$
\end{lemma}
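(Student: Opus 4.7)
The plan is to identify the two types of ends of a $1$-dimensional moduli space of holomorphic curves in the cobordism $(X,L)$ with the two sides of the asserted identity, and conclude by the vanishing of the skein-valued boundary of a compact oriented $1$-manifold.

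I would start by considering the moduli space $\mathcal{M}(\rho)$ of holomorphic curves in $(X, L)$ with a single positive puncture asymptotic to $\rho$ and no negative punctures. In the present symplectic Calabi--Yau setting with Maslov-zero Lagrangian and Reeb positivity, the standard index formula gives $\operatorname{vdim} \mathcal{M}(\rho) = |\rho| = 1$. Thus $\mathcal{M}(\rho)$ is a $1$-manifold (assuming transversality as in \cite{SOB, bare, ghost, Scharitzer-Shende}), and I would appeal to SFT compactness to classify the ends of its compactification. Reeb positivity excludes any interior breaking (which would require Reeb chords of total index $0$), so only two types of ends survive.

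At the upper end, the curve translates upward into the positive cylindrical end $[T,\infty) \times \partial_+ V$; in the limit it sits entirely in the cylinder and becomes, after quotient by the $\R$-action, a rigid-mod-$\R$ holomorphic curve in the symplectization $\R \times \partial_+ V$ with positive puncture $\rho$. This end contributes $\mathbf{A}(\partial_+ L, \rho)$. At the lower end, an SFT breaking along $\partial_- V$ produces a rigid upper piece in $(X, L)$ with positive puncture $\rho$ and a single negative puncture $\nu \in R_1(\partial_- L)$---the index constraint $|\rho| = \sum_j |\nu_j|$ combined with Reeb positivity forces exactly one negative puncture, at an index-$1$ chord---counted by $\Psi_{W, L}(\rho, \nu)$, glued to a rigid-mod-$\R$ lower piece in $\R \times \partial_- V$ with positive puncture $\nu$, counted by $\mathbf{A}(\partial_- L, \nu)$. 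Summing over $\nu$, the lower end contributes $\sum_{\nu \in R_1(\partial_- L)} \Psi_{W, L}(\rho, \nu) \cdot \mathbf{A}(\partial_- L, \nu)$.

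The identity then follows by equating the two boundary contributions via the skein-valued $1$-manifold principle. The principal obstacle is to carry out the skein-valued compactness and signed count with sufficient care to rule out hidden boundary contributions---additional breaking levels, sphere or disk bubbling, or escape of curves into low-index orbits or chords---all of which are controlled by exactness, the Calabi--Yau and Maslov-zero data, the Reeb positivity assumption, and the background $4$-chain $K$, following the setup in \cite{SOB, bare, ghost, Scharitzer-Shende}.
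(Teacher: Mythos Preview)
Your proposal is correct and follows essentially the same approach as the paper: study the one-dimensional moduli of curves in the cobordism with a single positive puncture at $\rho$, and identify the SFT breakings at the two ends with the two sides of the identity, the boundary nodal degenerations vanishing modulo the skein relations. The paper's proof is terser but identical in substance; your only slight imprecision is in the final paragraph, where boundary breaking is not something to be ``ruled out'' but rather contributes zero \emph{because} of the skein relations---this is the content of what you call the skein-valued $1$-manifold principle.
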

\begin{proof}
    Same as the proof of Lemma \ref{wave function} \cite{unknot}, \cite[Lemma 3.4]{Scharitzer-Shende}: 
    study the moduli of curves in the cobordism with one positive puncture at $\rho$.  This is a one dimensional moduli space, whose boundary is on the one hand numerically zero, and on the other hand consists of the SFT breakings -- giving the terms in the equation -- and boundary breaking, which is zero modulo the skein relations. 
\end{proof}

We turn to the disk surgery. 
Consider a 
contact manifold $V$, a Legendrian $
\Lambda \subset V$, and a Legendrian surgery disk $D$ with $\partial D \subset \Lambda$.  We fix a Darboux chart is locally as in the chart as in Figures \ref{Fig:Disk-surgery} and 
\ref{Fig:Surgery-1}
We write $\Lambda_D$ for the result of disk surgery. 
As depicted in Figure \ref{Fig:Surgery-1}, we fix a Clifford torus $\T_{Cl}$ 
whose front may be drawn alongside 
$\Lambda$ in the surgery region.

We abstract the properties we require of the cobordism constructed in Section  \ref{sec: disk surgery cobordism}.

\begin{definition} \label{disk surgery cobordism}
A {\em disk surgery cobordism} is 
an exact cobordism $C$ with negative end $\Lambda \sqcup \T_D$, and positive end $\Lambda_D$.  
A {\em topological trivialization} of a disk surgery cobordism is the data of a solid torus $\mathbf{T}$, 
an identification $\partial \mathbf{T} = \T_{Cl}$, 
and a smooth trivialization 
\begin{equation} \label{trivialization} 
\R \times \Lambda = 
C \cup_{\T_{Cl}} \mathbf{T} = \R \times \Lambda_D
\end{equation}
carrying the longitude of 
$\mathbf{T}_D$ to the 
$0 \times \partial D \subset 0 \times \Lambda$.  
We also require the data of an extension of the topological brane structures on $C$ (vector field, linking lines, spin structure) into $\mathbf{T}$.  

Note there is an evident identification of $\T_{Cl}$
with the standard Clifford torus in $S^5$ (they have the same front projection).  We say that a topological trivialization is {\em standard} if this identification extends to an isomorphism of pairs of $(\mathbf{T}, \T_{Cl})$ with the topological space underlying one of the (three) Lagrangian smoothings of the Harvey-Lawson cone in $\R^6$ which fills the standard Clifford torus in $S^5$.   In this case the symplectic primitive on $\R^6$ determines a positive cone in $H^1(\mathbf{T})$, and we orient $\partial D$ positively in this sense.   
\end{definition}

\begin{remark}
    We fill the cobordisms only topologically, rather than geometrically (i.e. actually gluing in the smoothed Harvey-Lawson Lagrangian), because we want to avoid discussing the composition of non-exact cobordisms.  
\end{remark}

\begin{figure}
\includegraphics{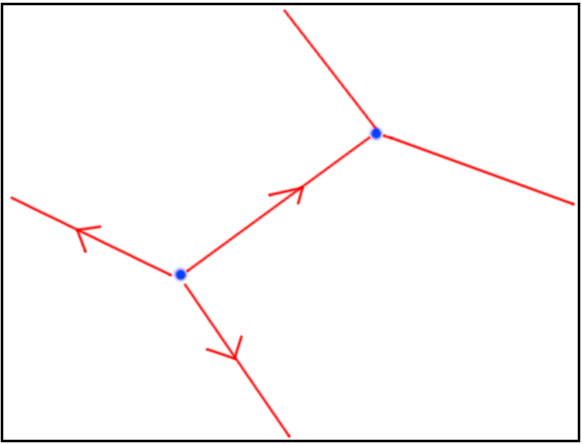}
    \caption{Paths from the boundaries of the curves along the Clifford torus}  
    \label{Fig:Clifford paths}
\end{figure}

\begin{remark}
    Let us explain one way to characterize which trivialization fillings are standard.  Consider the disks in the symplectization of $(S^5, \T_{Cl})$ with one positive puncture at the (unique in our presentation) index-1 Reeb chord.  Their boundaries trace out paths along $\R \times \T_{Cl}$.  These were determined (with varying degrees of explicitness) in e.g. \cite{Rizell-Surgery, unknot, Scharitzer-Shende}; we depict them in Figure \ref{Fig:Clifford paths}. 

    The characteristic topological property of the Harvey-Lawson fillings is that two of the three paths should become isotopic in the filling.  This means contracting (in the figure) either the vertical, horizontal, or anti-diagonal circle.  One of these is in the same homotopy class as the cusp edge, and the others have intersection numbers $\pm 1$ with it.  From Lemma \ref{Topological filling possible}, we see the latter two give fillings compatible with trivialization.  One checks in terms of the rules above that these provide the two different orientations of $\partial D$.  
    
    We will give another version of this discussion in terms of a different presentation of the Clifford torus later in Lemma \ref{Lem: CPG topological filling}. 
\end{remark}

The disk surgery cobordism we created above was trivial outside the surgery region.  One might hope that curves which begin and end outside the surgery region behave as if the cobordism was in fact a trivial cobordism.  We axiomatize the desired property: 

\begin{definition} \label{def: weakly perfect}
    We say that a disk surgery cobordism is {\em weakly perfect} if: 
    \begin{enumerate}
        \item if there is a nontrivial rigid curve with positive end at Reeb chord $\rho_+ \in R_1(\Lambda_D)$ and negative end at Reeb chord $\rho_- \in R_1(\Lambda)$, then 
    $\mathrm{length}(\rho_+) \ge \mathrm{length}(\rho_-)$.  That is, we ask that these curves respect the action filtration, as they would inside a trivial cobordism.  Note we put no condition on curves with negative end at the Clifford torus.  
        \item \label{wpdef trivial strips}
        There is a unique rigid curve from a each index-1 Reeb chord at the positive end to its counterpart at the negative end, which is a strip whose boundaries become isotopic to vertical lines under the trivialization \eqref{trivialization}. 
    \end{enumerate}    
\end{definition}
 
By definition, in a weakly perfect disk surgery cobordism 
for which all chords in $R_1(\Lambda)$ and 
$R_1(\Lambda_D)$ have the same length, 
the only curves between them which may appear
are the trivial strips of \eqref{wpdef trivial strips} above.  We axiomatize this: 

\begin{definition} \label{def: perfect} 
We say that a disk surgery cobordism is 
{\em perfect} if the only rigid curves between 
elements of $R_1(\Lambda)$ and  $R_1(\Lambda_D)$
are one topologically trivial strip between 
each pair of corresponding chords. 
\end{definition}

\begin{theorem} \label{cluster transformation from perfect cobordism} 
Suppose that $\Lambda$ and $\Lambda_D$ are related by disk surgery, and there exists
a perfect disk surgery cobordism $L$ (admitting compatible brane structures), and a standard topological trivialization of $L$. 
Assume $\partial D$ is not homologically trivial. 
Fix a completion $\widehat{Sk}(L)$ along a cone containing $\partial D$. 
Then 
$$\mathbf{A}(\Lambda_D,\rho_+) \mathbf{E}(\partial D) = \gamma \mathbf{E}(\partial D)  \mathbf{A}(\Lambda,\rho_-)$$
for  $\gamma$ some signed monomial in the framing variable.  

If we weaken the hypothesis to `weakly perfect', we may still conclude the result for the shortest Reeb chords. 
\end{theorem}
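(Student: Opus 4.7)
The strategy is to apply SFT breaking in the cobordism $C$, collapse the $\Lambda$-contribution via perfectness, absorb the $\T_{Cl}$-contribution using the defining relation $\mathbf{A}(\T_{Cl})\cdot \mathbf{E} = 0$ after postcomposing with $\mathbf{E}(\partial D)$, and then use the standard topological trivialization to put the equation in the stated form.

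First, apply Lemma~\ref{Lem:General-breaking} to $C$ with positive-end chord $\rho_+$. Since $\partial_- C = \Lambda \sqcup \T_{Cl}$, the right-hand sum splits into a $\Lambda$-part and a $\T_{Cl}$-part:
\[
\mathbf{A}(\Lambda_D,\rho_+) = \sum_{\nu \in R_1(\Lambda)} \Psi_C(\rho_+,\nu)\,\mathbf{A}(\Lambda,\nu) + \sum_{\nu \in R_1(\T_{Cl})} \Psi_C(\rho_+,\nu)\,\mathbf{A}(\T_{Cl},\nu).
\]
By Definition~\ref{def: perfect}, the first sum collapses to a single term $\mathbf{A}(\Lambda,\rho_-)$, since the only rigid curve from $\rho_+$ to a $\Lambda$-chord is the topologically trivial strip to $\rho_-$ with skein coefficient $1$.

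Next, right-multiply by $\mathbf{E}(\partial D)$. Each product $\mathbf{A}(\T_{Cl},\nu)\cdot \mathbf{E}(\partial D)$ takes place inside the tubular neighborhood of $\partial D$ in $\widehat{Sk}(L)$; by the standardness of the trivialization (Definition~\ref{disk surgery cobordism}) this neighborhood is identified as a pair with the Harvey--Lawson model $(\mathbf{T},\T_{Cl})$, with all brane structures matching. Lemma~\ref{unique solution for clifford} therefore gives $\mathbf{A}(\T_{Cl})\cdot \mathbf{E} = 0$ locally, annihilating every $\T_{Cl}$-term and leaving
\[
\mathbf{A}(\Lambda_D,\rho_+)\,\mathbf{E}(\partial D) = \mathbf{A}(\Lambda,\rho_-)\,\mathbf{E}(\partial D).
\]
Finally, under the trivialization $C \cup \mathbf{T} \cong \R \times \Lambda$, the element $\mathbf{A}(\Lambda,\rho_-)$ is $\R$-translation-invariant away from $\partial D$; sliding it from above $\mathbf{E}(\partial D)$ to below it introduces only a framing/sign correction along strands of $\mathbf{A}$ crossing $\partial D$, and the orientation conventions in Definition~\ref{disk surgery cobordism} pin this correction down to the signed framing monomial $\gamma$. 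For the weakly perfect version, if $\rho_+$ has minimal length then Definition~\ref{def: weakly perfect}(1) alone forces the only $\Lambda$-contribution to be the trivial strip, so the same argument goes through unchanged.

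\emph{Main obstacle.} The delicate step is justifying that the abstract solid-torus identity $\mathbf{A}(\T_{Cl})\cdot \mathbf{E} = 0$, proved inside the isolated $\widehat{Sk}^+(\mathbf{T})$, transports verbatim into $\widehat{Sk}(L)$ around $\partial D$. This is precisely what the \emph{standard} topological trivialization is designed to supply: the local topology, orientations, background $4$-chain $K$, and spin/framing structures around $\T_{Cl}$ all match the Harvey--Lawson smoothing by hypothesis, so the local skein computation is literally unchanged. A secondary subtlety is pinning down the precise monomial $\gamma$ in the repositioning step; this reduces to a direct framing computation at the boundary of the trivial strip, using the twisted sign conventions from the $4$-chain $K$.
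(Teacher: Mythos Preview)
Your overall strategy---SFT breaking via Lemma~\ref{Lem:General-breaking}, collapse of the $\Lambda$-sum by perfectness, annihilation of the $\T_{Cl}$-terms by $\mathbf{A}(\T_{Cl})\cdot\mathbf{E}=0$---is exactly the paper's. But the last two moves misidentify both the position of $\mathbf{E}(\partial D)$ and the source of $\gamma$, and the ``sliding'' step does not work.

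The operation being performed is not ``right-multiply by $\mathbf{E}(\partial D)$'' in the algebra $Sk(\R\times\Lambda)$. It is: glue the solid torus $\mathbf{T}$, containing $\mathbf{E}$, onto the $\T_{Cl}$ boundary component of the cobordism, and then use the trivialization $L\cup_{\T_{Cl}}\mathbf{T}\cong \R\times\Lambda$. Under this identification, $\Lambda_D$ is at the positive end, $\Lambda$ is at the negative end, and $\mathbf{T}$ sits \emph{in between}. Hence the surviving term $\Psi_C(\rho_+,\rho_-)\,\mathbf{A}(\Lambda,\rho_-)$, after gluing, already has $\mathbf{E}(\partial D)$ \emph{above} $\mathbf{A}(\Lambda,\rho_-)$: the identity reads
\[
\mathbf{A}(\Lambda_D,\rho_+)\,\mathbf{E}(\partial D) \;=\; \gamma\,\mathbf{E}(\partial D)\,\mathbf{A}(\Lambda,\rho_-)
\]
directly, with no commutation required. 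The paper flags exactly this point in its parenthetical remark.

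Correspondingly, $\gamma$ is not a commutator correction; it is the skein value of the trivial strip $\Psi_C(\rho_+,\rho_-)$ itself, which is a signed monomial in $a$ because the strip may cross the linking lines $K^\circ\cap L$ of the twisted skein. Your claim that this coefficient is $1$ is unwarranted. More seriously, the ``sliding'' you propose---commuting $\mathbf{A}(\Lambda,\rho_-)$ past $\mathbf{E}(\partial D)$ up to a monomial---is precisely the kind of statement the theorem is meant to establish for $\mathbf{A}(\Lambda_D,\rho_+)$ versus $\mathbf{A}(\Lambda,\rho_-)$; it is not available as an input, and is false in general (if $\mathbf{A}$ and $\mathbf{E}$ commuted up to scalars, the cluster transformation would be trivial).
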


\begin{proof}
Lemma \ref{Lem:General-breaking} 
plus the perfectness hypothesis
gives the following equation:
$$ \mathbf{A}(\Lambda_D,\rho_+) = \Psi_{Y\times \R,L}(\rho_+,\rho_-) \mathbf{A}(\Lambda,\rho_-) + \sum_{\nu \in R_1(\T_{Cl})} \Psi_{Y\times \R,L}(\rho_+,\nu)\mathbf{A}(\T_{D}, \nu) $$
where $\rho_+$ is a Reeb chord of $\Lambda_D$ and $\nu$ runs over the the index $1$ Reeb chords of $\T_{Cl}$.
Here,  $\Psi_{Y\times \R,L}(\rho_+,\rho_-) $ is the contribution of a trivial strip, hence some signed monomial in `$a$'.  

We now forget symplectic geometry, and consider the above equation just in the skein of the cobordism.  
We glue in $\mathbf{T}$ containing $\mathbf{E}$.  By the `standard' hypotheses on $\mathbf{T}$, the fact that $\mathbf{E}$ is the skein-valued curve count for a Harvey-Lawson brane \cite{unknot}, and Lemma \ref{wave function}, we deduce
$\mathbf{A}(\mathbb{T}_{Cl}, \nu) \mathbf{E}(\partial D) = 0$.  We are left with the stated result.

(It may be confusing why the $\mathbf{E}(\partial D)$ appears to the left, rather than right, of $\mathbf{A}(\Lambda, \rho_-)$, since $\mathbb{T}_{Cl}$ and $\Lambda$ are symplectically both negative ends of the cobordism.  The point is that the equation is written in the skein of the $\R \times \Lambda$ of Def. \ref{disk surgery cobordism}.  Here, $\Lambda_D$ is at one end, $\Lambda$ at the other, and $\mathbf{T}$ in the middle.) 
\end{proof}

When we wish to compose cobordisms, we must ensure compatibility of the skein completions.  
One way to do this is to ask our Legendrians $\Lambda$
to come equipped with an element $\lambda \in H^1(\Lambda)$, e.g. given by a 1-form, which match under the topological trivializations of the cobordism, and so that $\lambda(\partial D) > 0$ for the various surgeries which appear.  We term such cobordisms {\em composable}. 

\begin{corollary} \label{composable cobordism formula}
Let $S = (C_1, C_2, \ldots, C_n)$ be a collection of perfect disk surgery cobordisms with standard topological trivializations.  Let $C_i$ have positive end $\Lambda_i$ and negative end $\Lambda_{i-1}$ (union a Clifford torus); assume that the brane structures are also compatible.  We write $D_i$ and $\partial D_i$ for the corresponding surgery disk and its boundary.  Suppose given $\lambda_i \in H_1(\Lambda_i)$ which agree under the cobordisms and so that $\lambda_i(\partial D_i) > 0$. 

Then the following expression is well defined
$$\Psi_S := \mathbf{E}(\partial D_n) \cdots \mathbf{E}(\partial D_1) \in \widehat{Sk}_\lambda(\R \times \Lambda)$$
Here, $\lambda$ is the common value of the $\lambda_i$ and $\Lambda$ the common topological type of the $\Lambda_i$.  

Moreover,
$$\mathbf{A}(\Lambda_n)
\Psi_S   = \Psi_S  \mathbf{A}(\Lambda_0)$$
\end{corollary}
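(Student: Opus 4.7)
The plan is to first make sense of $\Psi_S$ in a single ambient completed skein, and then to obtain the conjugation identity by iterating Theorem \ref{cluster transformation from perfect cobordism} one cobordism at a time.

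\textbf{Well-definedness of $\Psi_S$.}  The standard topological trivializations of the $C_i$ identify the underlying topological 3-manifolds of the $\R \times \Lambda_i$, and the composability hypothesis identifies the 1-forms $\lambda_i$ under these identifications.  This gives a common model $\R \times \Lambda$ together with embedded curves $\partial D_i$, each lying in the $\lambda$-positive cone of $H_1(\Lambda)$ by hypothesis.  By Lemma \ref{unique solution for clifford}, $\mathbf{E} = 1 + (\text{positive winding})$ in $\widehat{Sk}^+(\mathbf{T})$; inserting along $\partial D_i$ in the sense of Section \ref{skein dilogarithm} therefore produces an element of $\widehat{Sk}_\lambda(\R \times \Lambda)$ of the form $1 + (\text{strictly $\lambda$-positive})$.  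A finite product of such elements is well-defined in $\widehat{Sk}_\lambda$, since for any fixed $\lambda$-threshold only finitely many terms contribute.

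\textbf{The conjugation identity.}  Apply Theorem \ref{cluster transformation from perfect cobordism} to the cobordism $C_i$.  For each Reeb chord $\rho$ of $\Lambda$ (identified through the trivializations with a chord of each $\Lambda_i$, since the cobordisms are perfect and their topological trivializations match the capping data), it yields
\begin{equation*}
\mathbf{A}(\Lambda_i,\rho)\, \mathbf{E}(\partial D_i) \;=\; \gamma_i(\rho)\, \mathbf{E}(\partial D_i)\, \mathbf{A}(\Lambda_{i-1},\rho),
\end{equation*}
inside $\widehat{Sk}_\lambda(\R \times \Lambda)$, where $\gamma_i(\rho)$ is a signed monomial in the framing variable.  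Iterating from $i=n$ down to $i=1$ telescopes:
\begin{align*}
\mathbf{A}(\Lambda_n,\rho)\, \Psi_S
&= \mathbf{A}(\Lambda_n,\rho)\, \mathbf{E}(\partial D_n)\cdots \mathbf{E}(\partial D_1)\\
&= \gamma_n(\rho)\, \mathbf{E}(\partial D_n)\, \mathbf{A}(\Lambda_{n-1},\rho)\, \mathbf{E}(\partial D_{n-1})\cdots \mathbf{E}(\partial D_1)\\
&= \Big(\prod_{i=1}^n \gamma_i(\rho)\Big)\, \Psi_S\, \mathbf{A}(\Lambda_0,\rho).
\end{align*}

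\textbf{The framing monomials.}  The remaining point is to show $\prod_i \gamma_i(\rho) = 1$.  The factor $\gamma_i(\rho)$ in Theorem \ref{cluster transformation from perfect cobordism} is precisely the framing weight of the unique trivial strip in the perfect cobordism $C_i$ at the chord $\rho$; since that strip is topologically trivial and the cobordism is trivial away from the surgery locus, this factor comes from the interaction of the capping/background data with the surgery region.  Under the compatibility of spin, vector field and linking-line data assumed in the corollary (``assume the brane structures are also compatible''), the contributions from consecutive $C_i$ cancel in pairs along the shared boundary $\Lambda_i$, so their product collapses to the identity.  This is the step I expect to be the main obstacle: it requires chasing the twisted-skein sign conventions of Section \ref{what perfect cobordisms do} through the composition, in particular verifying that the $K$-background class used to define signs behaves multiplicatively under concatenation so that the monomials telescope.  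Once this bookkeeping is in place, combining the three paragraphs above gives $\mathbf{A}(\Lambda_n)\,\Psi_S = \Psi_S\, \mathbf{A}(\Lambda_0)$, as claimed.
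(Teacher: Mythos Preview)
Your approach—iterate Theorem \ref{cluster transformation from perfect cobordism} across the cobordisms—is exactly what the paper intends; the corollary is stated there without proof as an immediate consequence.  Your treatment of well-definedness of $\Psi_S$ in the common completion is also fine and in line with the paper's conventions.

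The problematic step is your third paragraph.  You claim that the framing monomials $\gamma_i(\rho)$ ``cancel in pairs along the shared boundary $\Lambda_i$'' so that $\prod_i \gamma_i(\rho)=1$.  But each $\gamma_i(\rho)$ is simply the skein weight of the trivial strip in the cobordism $C_i$—a signed monomial in $a$ attached to that single cobordism—and there is no pairing structure between $\gamma_i$ and $\gamma_{i+1}$ that would make them cancel.  Brane-structure compatibility ensures the skein modules and twisted sign conventions glue consistently, not that these particular scalars multiply to $1$.  So this argument, as written, does not go through.

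In fact the paper does not attempt to kill the $\gamma$'s at the level of this corollary.  The statement should be read either with an implicit overall monomial (irrelevant for the applications, e.g.\ Corollary \ref{relation for admissible filling} and Theorem \ref{uniqueness}, where only vanishing after acting on $1$ matters), or with the understanding that each $\mathbf{E}(\partial D_i)$ is the appropriately scaled $\mathbf{E}^{\gamma}$ of Lemma \ref{unique solution for clifford}, which amounts to a choice of framing on $\partial D_i$.  The careful normalization making $\gamma=1$ is carried out only later, in the appendix, via explicit $4$-chain constructions (Constructions \ref{Con: 4-chain of Basic cobordism} and \ref{Con: 4-chain of CPG corbordism}), and even then only for the specific cobordisms of interest.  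So you should either drop the claim $\prod_i\gamma_i=1$ and carry the monomial, or defer it to those constructions rather than trying to argue it abstractly.
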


In the above situation, we say $\lambda$ {\em controls} the composable cobordisms.

\begin{corollary} \label{relation for admissible filling}
    In the situation of Corollary \ref{composable cobordism formula}
    suppose the final negative end is $\Lambda_0$ has an exact filling $L$.  
    Then the image of 
    $\mathbf{A}(\Lambda) 
    \Psi_S$ in 
    $Sk(L)$ is zero. 
\end{corollary}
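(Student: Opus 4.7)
The plan is to derive this as a short formal consequence of the two main ingredients already in hand, namely Corollary~\ref{composable cobordism formula} (the intertwining identity) and Corollary~\ref{necklace annihilates} (the annihilation of the wave function by an exact filling).

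First, I would apply Corollary~\ref{composable cobordism formula}, which provides
$$\mathbf{A}(\Lambda)\, \Psi_S \;=\; \Psi_S\, \mathbf{A}(\Lambda_0),$$
where I identify $\Lambda$ with the topological type $\Lambda_n$ of the positive end of the composite cobordism.  This has the effect of moving the $\mathbf{A}$-operator from the positive end, where no filling is available, to the negative end $\Lambda_0$, where $L$ is.  So it suffices to prove that $\Psi_S \,\mathbf{A}(\Lambda_0)$ is sent to zero under the natural insertion $\widehat{Sk}_\lambda(\R \times \Lambda) \to Sk(L)$ obtained by gluing $\R \times \Lambda_0$ onto the cylindrical end of $L$.

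Next, because $L$ is exact, it bounds no compact holomorphic curves and its skein-valued curve count is $\Psi(L) = 1$.  Corollary~\ref{necklace annihilates} (itself a consequence of the wave-function equation of Lemma~\ref{wave function}) then asserts that $\mathbf{A}(\Lambda_0, \rho)$ maps to zero in $Sk(L)$ for every index-one Reeb chord $\rho$ of $\Lambda_0$.  Invoking associativity of the skein action $Sk(\R \times \Lambda) \circlearrowright Sk(L)$, I would conclude
$$\Psi_S \cdot \mathbf{A}(\Lambda_0) \cdot 1 \;=\; \Psi_S \cdot \bigl( \mathbf{A}(\Lambda_0) \cdot 1 \bigr) \;=\; \Psi_S \cdot 0 \;=\; 0,$$
which gives the desired vanishing.

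The main step to verify carefully, rather than a genuine obstacle, is that these identities remain valid after passing to the $\lambda$-completion: one needs the insertion map $\widehat{Sk}_\lambda(\R \times \Lambda) \to \widehat{Sk}_\lambda(L)$ to be well-defined and continuous with respect to the completion filtrations.  This should follow directly from the composability hypothesis: the common $\lambda \in H^1(\Lambda)$ restricts to a positive class on the cylindrical end of $L$, so the filtration by $\lambda$-positive classes on $\R \times \Lambda$ maps into the analogous filtration on $L$, and the series $\Psi_S = \mathbf{E}(\partial D_n) \cdots \mathbf{E}(\partial D_1)$ acts termwise.  Once this bookkeeping is in place, the corollary is just the two-line formal combination above.
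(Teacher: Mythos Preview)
Your proposal is correct and follows exactly the same approach as the paper: the paper's proof is a single line stating that the result is immediate from Corollary~\ref{necklace annihilates} and Corollary~\ref{composable cobordism formula}, and you have simply unpacked that one line.
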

\begin{proof}
    Immediate from Proposition \ref{necklace annihilates} and
    Corollary \ref{composable cobordism formula}.   
\end{proof}

\begin{remark}
    In practice,  the various $\Lambda_i$ will come with their own parameterizations, which match nontrivially (some Dehn twist) under the trivialization of the cobordism.   This may lead to the above expression being nontrivial to write in terms of some fixed parameterization of some $\Lambda_i$.  See e.g. Corollary \ref{dehn twist} for how this works
    for cubic planar graph Legendrians. 
\end{remark}

In the following sections we take on the task of finding conditions which ensure perfectness of disk surgery cobordisms.

\begin{remark} \label{why morse flow trees}
    Establishing perfectness amounts to excluding certain curves.  
    We will do this by estimating energy.  For Lagrangian cobordisms in symplectizations, cylindrical outside some fixed region $M \times [a,b]$, 
    there are (at least) 
    three natural energies to consider, see e.g.
    \cite{Ekholm-rational, Rizell-Surgery}. 

    For a curve $u$  with positive Reeb chords $\rho_1,\dots \rho_k$ and negative Reeb chords $\rho'_1,\dots \rho'_l$,  these $3$ energies are:

    \begin{align*}
        E_\omega(u) &= e^b\sum \ell(\rho_i) - e^a \sum \ell(\rho'_j) \\
        E_\lambda(u) &= e^b \sum \ell(\rho_i) \\
        E(u) &= E_\omega(u) + E_\lambda(u)
    \end{align*}
    All of these values are necessarily non-negative.  To exclude a holomorphic curve, it therefore suffices to show that one of these would be negative; of course this is in some sense most likely for $E_\omega$.  However, the most straightforward to estimate is $E$, and it is nontrivial to separate out the $E_\omega$ contribution.   Compare for instance the two results \cite[Lemma 5.1./5.3.]{Rizell-Surgery} where in one he establishes only an estimate for $E$ and in the other one an estimate for $E_\omega$ to rule out some disks. These methods are not sufficient for this text.

    However, in the Morse flow tree limit, $E_\omega$ has already been separated out.  This is why we have been able to prove our results when Morse trees are applicable.
\end{remark}

\section{Morse flow graphs for cobordisms} 
Our basic tool for counting curves in the cobordisms will be the adaptations in \cite{EK, Ekholm-Honda-Kalman} of the Morse flow tree technology of \cite{E}.  We review this below, and then develop some additional results in this context. 

\subsection{Review of results from \cite{EK, Ekholm-Honda-Kalman}}

\begin{construction}(Morse cobordism, \cite[Section 2.3./Remark 3.2.]{Ekholm-Honda-Kalman}) \label{Con: Morse Cobordism}
    Let $\tilde{L}$ be an eventually cylindrical exact Lagrangian cobordism in $(J^1(M) \times \R)$ and denote by $[a,b]$ an interval such that $\tilde{L} \cap (J^1(M) \times [a+\epsilon,b-\epsilon])^c$ is cylindrical. Then there is an associated Legendrian $L^{Mo}$ in $J^1(M \times [e^a,e^b])$ called the Morse cobordism of $\tilde{L}$: Let $\Lambda_+= \partial_{\infty} \tilde{L}$ and $\Lambda_-= \partial_{-\infty} \tilde{L}$. Furthermore let $h_+:[e^{b-\epsilon},e^b] \rightarrow \R$ be a non-decreasing function fulfilling the following properties:
    \begin{enumerate}
        \item $h_+(e^b)=e^b$
        \item $h_+(e^{b-\epsilon})=e^{b-\epsilon}$
        \item $h_+$ is given locally around $e^b$ by $e^b-t^2$ and locally around $e^{b-\epsilon}$ by $t+C$ (where $C$ is some positive number).
        \item The derivative of $h_+$ is $0$ only at $e^b$.
    \end{enumerate}
    We define a similar function $h_-:[e^a,e^{a+\epsilon}]\rightarrow \R$. Consider splitting $\Lambda_+=(\Pi_\C(\Lambda_+),z_+)$ into its Lagrangian projection and Reeb coordinate. Similarly for $\Lambda_-$. Then $L^{Mo}$ has the following properties:

    \begin{enumerate}
        \item In a neighborhood of $J^1(M \times \{e^b\})$ the Morse cobordism is given by:
        \begin{align*}
            (h_+(t) \Pi_\C(\Lambda_+), t, h'_+(t)z_+, h_+(t)z_+) 
        \end{align*}
        where we identify $J^1(M\times [e^a,e^b])=T^*M \times T^*[e^a,e^b] \times \R$. A similar statement is true at the negative end.
        \item All Reeb chords of $L^{Mo}$ are above $M \times \{e^a\}$ or $M\times \{e^b\}$. The Reeb chords above the positive end are in $1:1$ correspondence with the Reeb chords of $\Lambda_+$ with the index shifted up by $1$. The Reeb chords above the negative end are in $1:1$ correspondence with the Reeb chords of $\Lambda_-$ with the same index as in $\Lambda_-$.        
    \end{enumerate}
\end{construction}

\begin{proof}
    We will need some details of this construction, so we review it here.   The main idea is to first identify $J^1(M) \times \R=T^*M \times \R \times \R$ with $T^*(\Lambda) \times T^*\R_{>0}$ where we choose coordinates $(\lambda,z,t)$ on $J^1(M) \times \R$ and thus define the following map:

    \begin{align*}
        \Phi(\lambda,z,t)=(e^t\lambda,t,z)
    \end{align*}
    One easily verifies that this is a (non-exact) symplectomorphism. We use this symplectomorphism to transport $\tilde{L}$ into $T^*M \times T^*\R_{>0}$. By assumption outside of the interval $[a+\epsilon,b-\epsilon]$ $\tilde{L}$ is cylindrical which means it is given by $\Lambda_+ \times [b-\epsilon, \infty)$ close to $J^1(M) \times\{+\infty\}$. Let $(\Pi_\C(\Lambda),z_+)$ be a parametrisation of $\Lambda_+$ then $\Lambda_+\times \R$ is given by:

    \begin{align*}
        (\Pi_C(\Lambda_+),z_+,t)
    \end{align*}
    So the image of $L$ under this symplectomorphism over $T^*M \times T^*[e^{b-\epsilon},\infty)$ is given by:

    \begin{align*}
        (t\Pi_\C(\Lambda_+),t,z_+)
    \end{align*}
    We obtain a similar description over $T^*M \times T^*(0,e^{a+\epsilon}]$. Now we take the truncation $L^{con} = \Phi(\tilde{L}) \cap (T^*M \times T^*[e^a,e^b])$ and choose a function $h_+$ as in the statement of the Construction. One then defines $L^{Mo}$ to coincide with $L^{con}$ except above $M \times [e^{b-\epsilon},e^{b}]$ one deforms $L^{con}$ using $h_+$ (and similarly close to the negative end):
    \begin{align*}
        (h_+(t)(\Pi_\C(\Lambda_+),t,h_+'(t)z_+)
    \end{align*}
    As $\tilde{L}$ was initially exact, so is $L^{con}$ and one verifies that $h_+(t)z$ is an integral of $\alpha_{std}$  of $L^{Mo}$ close to the positive end which can be glued to a lift of $\alpha_{std}$ of $L^{con}$. Thus $L^{Mo}$ lifts to a Legendrian of $J^1(M \times [a,b])$ and by abuse of notation we refer to $L^{Mo}$ by this Legendrian.

    Reeb chords of $L^{Mo}$ are in bijection with self-intersections of its Lagrangian projection $\Pi_\C(L^{Mo})$, and $\tilde{L}$ had no such self-intersections as it was assumed to be an embedded Lagrangian. So away from the neighborhoods where we deformed using $h_+$, we have no Reeb chords. In these neighborhoods, we observe that we have Reeb chord above a point $(p,t)$ iff $\Lambda_+$ has a Reeb chord above $p$ and $h_+'(t)(z_+(p_2)-z_+(p_1))=0$ where $p_2$ and $p_1$ are the endpoints of the Reeb chord above $p$. For a Reeb chord of $\Lambda_+$ we can't have that $z_+(p_2)=z_+(p_1)$ since this would be an immersed double point. So the above can only be true if $h_+'(t)=0$ which happens exactly at $e^b$ (or $e^a$ for $h_-$).

    The statement about the indices follows directly by observing that the Morse index of a Reeb chord above $\Lambda_+$ is increased by $1$: Let $(f_2-f_1)(p') = x_1^2+\dots +x_k^2 - x_{k+1}^2 \dots - x_n^2 + C$ be a local Morse chart around a Reeb chord of $\Lambda_+$. Then in this chart $(f_2-f_1)(x_1,\dots,x_n)h(t)= (x_1^2+ \dots x_k^2-x^2_{k+1}-\dots -x_n^2+C)(e^b-t^2)$ which raises the index by $1$. At the negative end there is a $+t^2$ term added so the Morse index remains the same. In addition, if $\gamma$ is a capping path of any such Reeb chord in $\Lambda_{\pm}$ then we can include it into $L^{Mo}$ and obtain a corresponding capping path for the Reeb chord there. So the full index is either shifted up by $1$ or agrees with the initial one.
\end{proof}

Our curve counting strategy will  be to first count certain curves in $L^{Mo}$ and then use the following result to retranslate it into a statement about curves in the cobordism:

\begin{definition}
    Let $M$ be a any manifold and $g$ a metric on $M \times [e^a,e^b]$ and $L^{Mo}$ be a Morse cobordism in $J^1(M\times [e^a,e^b])$ obtained through Construction \ref{Con: Morse Cobordism}. We say that $g$ is pseudo-flat if it fulfills the following properties:

    \begin{enumerate}
        \item Close to $e^a$ and $e^b$: $g$ decomposes into products $g_- + dt^2$ and $g_+ + dt^2$.
        \item For each Reeb chord $\rho$ of $L^{Mo}$ at the positive end there is a neighborhood $U_\rho$ in $M$ such that $g_+$ is the Euclidean metric on this neighborhood. Similarly for Reeb chords above the negative end.
        \item $g$ has bounded geometry.
    \end{enumerate}
\end{definition}

\begin{theorem}(Disk counting, \cite[Lemma 1.4./Theorem 1.5.]{Ekholm-Honda-Kalman}) \label{thm: ehk flow tree count}
Let $\tilde{L} \subset J^1(M) \times \R$ be a Lagrangian cobordism cylindrical outside $[a+\epsilon,b-\epsilon]$ and $L^{Mo}$ the associated Morse cobordism from Construction \ref{Con: Morse Cobordism}. For $\gamma \in \R$, we write $\gamma L$ for the rescaling of $L \subset J^1(M) \times \R$ in the fibers of $J^1(M)$. Furthermore let $g$ be a pseudo-flat metric. Then there exists:

\begin{enumerate}
    \item A family $L_{\delta,\sigma} \subset J^1(M) \times \R$ of eventually cylindrical Lagrangian such that $L_{\delta,\sigma}$ is exact Lagrangian isotopic to $\sigma L$.
    \item A family of eventually cylindrical almost complex structures $J_{\delta,\sigma}$.
\end{enumerate}

Then for sufficiently small $\delta,\sigma>0$ there is a $1:1$-correspondence between rigid flow trees with one positive puncture at $L^{Mo}$ defined by $g$ and rigid holomorphic discs with one positive puncture tangent to $L_{\delta,\sigma}$ defined by $J_{\delta,\sigma}$, assuming that the space of rigid flow trees is transversely cut-out.
\end{theorem}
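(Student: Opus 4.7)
The plan is to adapt Ekholm's Morse flow tree correspondence \cite{E} to the cobordism setting, as in \cite{Ekholm-Honda-Kalman}, via the standard two-sided strategy: degenerate rigid holomorphic disks on a rescaled Lagrangian to rigid flow trees on the base, then glue rigid flow trees back to honest holomorphic disks for small $\delta, \sigma > 0$.

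First I would set up the local geometry. Since $L^{Mo}$ is Legendrian in the jet bundle $J^1(M \times [e^a, e^b])$, each sheet of its front is locally the graph of a function on the base $M \times [e^a, e^b]$, and the length of a Reeb chord at a crossing of two sheets equals the height difference of the local generating functions. Rescaling $L$ by $\sigma$ in the fibers brings all sheets arbitrarily close to the zero section while preserving the combinatorics of crossings and cusps in the base. The family $L_{\delta,\sigma}$ is an exact perturbation adjusted so that the Lagrangian projection agrees with a standard flat model on a fixed neighborhood of each Reeb chord; this is permitted exactly by the pseudo-flat hypothesis on $g$. The almost complex structure $J_{\delta,\sigma}$ is then chosen to be $\sigma$-rescaled standard near the zero section and to match these flat local models at Reeb chords.

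Second, I would prove that rigid holomorphic disks degenerate to rigid flow trees. Splitting $\bar\partial_{J_{\delta,\sigma}}$ into horizontal (base) and vertical (fiber) parts, one observes that as $\sigma \to 0$ the vertical part dominates, so a priori energy bounds force vertical collapse: the image of a rigid disk in $M \times [e^a, e^b]$ converges to a graph whose edges are gradient flow lines of differences of sheet height functions, whose trivalent vertices sit where three sheets locally interact, and whose endpoints lie at Reeb chords or cusps. This is precisely the combinatorics of a flow tree. A Gromov-type compactness in the fiber-collapse limit yields the limit, and the transversality hypothesis on the flow-tree moduli space forces rigidity. For the inverse direction I would glue: given a rigid, transversely cut-out flow tree $T$, assemble an approximate holomorphic disk by patching together local models --- a thin holomorphic strip above each edge, a local $Y$-piece at each trivalent vertex, and standard punctured holomorphic half-plane caps at Reeb-chord endpoints --- then correct via a weighted-Sobolev implicit function theorem to a true $J_{\delta,\sigma}$-holomorphic disk for all sufficiently small $\delta, \sigma$. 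Transversality of $T$ delivers surjectivity of the linearized operator, so the corrected disk is unique and the assembly is a bijection.

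The hardest step is the gluing analysis at flow-tree vertices together with uniform estimates as $\delta, \sigma \to 0$: one must construct local models for each vertex type carefully enough that patching errors are absorbed by the quantitative implicit function theorem on weighted Sobolev spaces. The cobordism setting imposes an extra difficulty beyond \cite{E}, namely ensuring that the Morse-cobordism construction with functions $h_\pm$ is compatible with the standard flat local models at the Reeb chords lying above $\{e^a, e^b\}$; this is precisely why $h_\pm$ is required to be quadratic near these endpoints in Construction \ref{Con: Morse Cobordism}, and why the pseudo-flat condition singles out Euclidean neighborhoods around exactly those end-Reeb chords.
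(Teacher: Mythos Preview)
Your outline is a reasonable sketch of the Ekholm flow-tree/disk correspondence and its adaptation to cobordisms, but note that the paper does not actually prove this theorem: it is quoted as a black-box result from \cite[Lemma 1.4/Theorem 1.5]{Ekholm-Honda-Kalman}, with only a brief remark that the pseudo-flat condition comes from the analysis near Reeb chords in \cite[Section 5]{Ekholm-Honda-Kalman}, and that the extension from $\dim M = 1$ to general $M$ was communicated by Ekholm. So there is nothing to compare against beyond the cited references, whose strategy your sketch accurately summarizes.
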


The condition that $g$ is pseudo-flat is inherited from the study of solutions near Reeb chords of $L_{\delta,\sigma}$ in \cite[Section 5]{Ekholm-Honda-Kalman}.  (In that article,  only the case $\dim M = 1$ was presented, but, as explained to us by Ekholm, the same argument works in general using pseudo-flatness.) 


\begin{remark}
For our applications,  we will need to exclude the possibility of higher genus curves.  In fact the arguments of \cite{E} already suffice to show that such curves will in this situation limit to flow graphs (see e.g. discussion is \cite[Sec. 4]{Scharitzer-Shende}), so showing that there are no flow graphs implies there were no curves.
(Note we do not require any  transversality or gluing results regarding higher genus flow graphs.) 
\end{remark}

\subsection{Some additional lemmas}

In \cite{E} (and so also \cite{EK, Ekholm-Honda-Kalman}), it is assumed that all front projections are either two dimensional with generic singularities, or, if higher dimensional, have only cusp-edge singularities.   The fronts of (the $L^{Mo}$ of) our cobordisms will be three dimensional, and do not have only cusp edge singularities.  
So, we cannot directly appeal to \cite[Theorem 1.1]{E} to ensure the existence of some deformation of $g$ for which rigid flow trees are transversely cut-out.  

Instead, we will have to first directly show that our trees avoid all the singularities other than cusp edges, and are transversely cut out; after this, we can apply the results of \cite{E}.  The following lemmas will help us to do this.  They hold for Morse flow trees in $L^{Mo}$, without any further assumptions on the geometry of the front projection.  (The arguments are essentially trivial, but carried out in sufficient detail as may obscure this fact.)

$L^{Mo}$ close to the boundary decomposes into a product both on the level of the Legendrian and on the level of the metric.  This constrains the possibilites for flow graphs considerably. The following Lemma is a sharpening of a statement from \cite[Lemma 4.11.]{Ekholm-Honda-Kalman}:

\begin{lemma}(\cite[Lemma 4.11.]{Ekholm-Honda-Kalman}) \label{Lem:Flow graph restrictions}
    Let $L^{Mo}$ be a Morse cobordism and $g$ a metric which decomposes into $g_{\pm} + dt^2$ close to the positive/negative end of $L^{Mo}$. Then there are $2$ possible types for connected components of a Morse flow graph:
    \begin{enumerate}
        \item Boundary flow graphs: Flow graphs which are completely contained above $M \times \{e^b\}$ or $M \times \{e^a\}$. These are in $1:1$ correspondence with the flow graphs of $\Lambda_+$ and $\Lambda_-$.
        \item Interior flow graphs: All edges of the flow graph are above $M \times (e^a,e^b)$.
    \end{enumerate}
    Furthermore, an interior flow graph component has only the following allowed dynamics close to the ends.
    \begin{enumerate}
        \item There are no negative/positive punctures above the positive/negative end.
        \item A flow graph has only $1$-valent vertices above punctures.
    \end{enumerate}
\end{lemma}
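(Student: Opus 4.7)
The plan is to exploit the product structure of both $L^{\mathrm{Mo}}$ and the metric $g$ near the ends to analyze the gradient equation for the height-difference function of two local sheets. Near the positive end, by Construction~\ref{Con: Morse Cobordism} this function takes the form
\[
F(x,t) \;=\; h_+(t)\bigl(z_{+,2}(x) - z_{+,1}(x)\bigr),
\]
and under the product metric $g_+ + dt^2$ its gradient splits as
\[
\nabla F \;=\; \bigl(h_+(t)\,\nabla_M(z_{+,2} - z_{+,1}),\; h_+'(t)(z_{+,2} - z_{+,1})\bigr).
\]
By the defining properties of $h_+$, the $t$-component of $\nabla F$ vanishes precisely at $t = e^b$ and is strictly positive on $(e^{b-\epsilon}, e^b)$; the mirror statement, with $h_-'(e^a) = 0$ and $h_-'(t) > 0$ for $t > e^a$, holds at the negative end. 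In particular, the critical points of $F$ near either end lie exactly in the boundary strata $\{t = e^a\}$ and $\{t = e^b\}$, matching the Reeb chords of $L^{\mathrm{Mo}}$.

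The dichotomy into boundary and interior components then follows by a short ODE argument. A gradient trajectory contained in $\{t = e^b\}$ has $\dot t \equiv 0$ and is, up to the constant rescaling $h_+(e^b) = e^b$, a gradient flow line of $z_{+,2} - z_{+,1}$ on $\Lambda_+$; conversely, a trajectory with $t(s_0) < e^b$ has $\dot t \neq 0$ away from critical points, so it cannot cross into $\{t = e^b\}$ at finite flow time. Linearizing $\dot t \sim -c(e^b - t)$ near a Reeb chord of $\Lambda_+$ shows that such a trajectory reaches $t = e^b$ only asymptotically, at a puncture of $L^{\mathrm{Mo}}$. Hence every edge of a Morse flow graph is either wholly boundary ($t \equiv e^a$ or $t \equiv e^b$) or wholly interior, and the two types can meet at a shared vertex only if that vertex is reached at infinite flow time on the interior side, i.e.\ a $1$-valent puncture. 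Since cusp-edge vertices and $Y$-vertices are by definition reached in finite flow time, they cannot be such meeting points, so the connected components of the flow graph are homogeneous. The $1{:}1$ correspondence between boundary components and flow graphs of $\Lambda_\pm$ is then immediate from the identification of the gradient equations up to the constant factor $e^a$ or $e^b$.

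For the two refined statements on interior components, I follow the sign of $\dot t$ along an edge in the negative-gradient direction. Near the positive end, $\dot t = -h_+'(t)(z_{+,2} - z_{+,1}) < 0$, so $t$ strictly decreases along the flow; an interior edge asymptotic to $\{t = e^b\}$ must therefore do so at its past endpoint, i.e.\ its positive puncture, and no negative puncture of an interior flow graph can lie above the positive end. The symmetric computation at the negative end rules out positive punctures above it. The valence restriction follows along the same lines: any cusp-edge or $Y$-vertex located above $\{t = e^a\}$ or $\{t = e^b\}$ would require an interior edge to reach the boundary slice in finite flow time, which has already been excluded, so only $1$-valent puncture vertices of interior components survive at the ends.

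The main obstacle is the asymptotic control of interior flow lines near the boundary strata, in particular ruling out degenerate or non-transverse approaches that could invalidate the clean separation above. This is handled by the pseudo-flatness of $g$, which near each Reeb chord reduces $F$ to a Euclidean quadratic in the $M$-directions multiplied by the decoupled $h_\pm(t)$ factor, together with the standard stable/unstable manifold picture at non-degenerate critical points already deployed in \cite{E, EK, Ekholm-Honda-Kalman}.
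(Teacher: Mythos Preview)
Your proof is correct and follows essentially the same approach as the paper's: both exploit the product structure of the gradient $\nabla F = (h_\pm(t)\nabla_M(z_2-z_1),\, h_\pm'(t)(z_2-z_1))$ near the ends to see that the boundary slices are flow-invariant and that interior edges can only reach them asymptotically at Reeb chords, hence at $1$-valent puncture vertices. One small remark: your final paragraph's appeal to pseudo-flatness is unnecessary and slightly misleading, since the lemma only assumes the product decomposition $g_\pm + dt^2$ near the ends, and that (together with embeddedness, so $z_2 - z_1 \neq 0$) already gives the bound $|\dot t| \le C\,|e^b - t|$ everywhere near the positive end, not just near Reeb chords.
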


\begin{proof}
    Let $\mathcal{G}' \subset \mathcal{G}$ be a connected component of a flow graph and let $v$ be a vertex of $\mathcal{G}'$ which is above the negative and positive ends and has an adjacent edge which is not contained in that end. Assume that it is above the negative end. Then by Construction \ref{Con: Morse Cobordism} this cobordism close to the negative end has the form:

    \begin{align*}
        (h(t)\Pi_\C(\Lambda_-),t,h'(t)z(\Lambda_-), h(t)z(\Lambda_-))
    \end{align*}
    where $\Lambda_-$ is an embedding of the negative end into $J^1(M)$, $\Pi_\C$ is the Lagrangian projection and $z$ is the Reeb chord projection and $h$ is a function which locally looks like $t^2$ around $e^a$. As the metric decomposes into a product in such a neighborhood the gradient between two sheets looks like:
    \begin{align*}
        (\grad(f_1-f_2)(s)h(t), (f_1-f_2)(s)h'(t))
    \end{align*}
    where $f_2$ and $f_1$ are locally defined functions (not necessarily on an open subset) around the edge such that $\Lambda$ coincides with the $1$-jet lift of $f_2$ and $f_1$. 

    
    If $t=e^a$ then the last coordinate of the differential vanishes; flow lines through any such point are thus contained in the $t = e^a$ slice. 
    Consequently, a flow line from the complement of this slice can only limit to the slice if it limits to a critical point.  Thus a flow graph vertex on the slice to which a flow line from the slice complement arrives must sit above a Reeb chord, and the flow line from the complement must be carried by the corresponding sheets.  Such vertices are always $1$-valent vertices. 

    The last statement to show is that there are no negative/positive punctures above the positive/negative end for an interior flow graph component. We will argue that there are no positive punctures above the negative end. The other statement is proven similarly. Consider the local model of a Reeb chord which is given by $(f_2-f_1)(x)h(t)=(x_1^2+\dots x_k^2-x_{k+1}^2-\dots -x_n^2 + C)(e^a+ t^2)$ where $(x_1,\dots,x_n)$ are local coordinates on $M$. Then as the metric decomposes into a product, we immediately observe that the unstable manifold of $\grad(f_1-f_2)h$ is completely contained in the $t=e^a$ slice. So any edge limiting away from this puncture will be completely contained in the negative slice.
\end{proof}


We now give conditions which ensure that Reeb chords have only trivial flow lines. 

\begin{definition} \label{legendrian basic}
    Let $\Lambda \subset J^1 M$ be a Legendrian, 
    and $\rho$ a Reeb chord of $\Lambda$.  We say 
    $\Lambda$ is basic around $\rho$ if there is a neighborhood
    $U \subset M$ beneath $\rho$ such that the following hold over $U$:  
    \begin{enumerate}
        \item $\Lambda \to M$ is a covering,
        \item The front projection of $\Lambda$ is injective,
        \item $\rho$ is the only Reeb chord of $\Lambda$.
    \end{enumerate}
\end{definition}

    Being basic at a Reeb chord is a generic condition on $\Lambda$, which can be expected to fail in codimension one in families. 

\begin{definition}
    Let $\tilde{L}$ be an eventually cylindrical Lagrangian cobordism between $\Lambda_-$ and $\Lambda_+$. Let $\rho$ be a Reeb chord around which $\Lambda_-$ is basic for neighborhood $U$. 
     If we have in addition $\tilde{L} \cap (J^1(U) \times \R) = (\Lambda_- \cap J^1(U)) \times \R$,
    then we say that $\tilde{L}$ is basic around $\rho$.
\end{definition}

\begin{lemma}(Trivial tree) \label{Lem:Trivial tree} 
    Let $\rho$ be a Reeb chord of $\Lambda_-$ and $\tilde{L}$ an eventually cylindrical Lagrangian cobordism which is basic around $\rho$. Furthermore, let $g$ be a metric on $M \times [e^a,e^b]$ such that $g|_{U\times [e^a,e^b]} = g_- +dt^2$ where $U\times\R$ is the cylindrical neighborhood of $\rho$ and $g_-$ is a metric on $U$. Then for the associated Morse cobordism $L^{Mo}$ there is a unique flow graph with $1$ positive puncture completely contained in $J^1(U \times [e^a,e^b])$ which is a transversally cut-out flow tree and lives in the slice $\rho \times [e^a,e^b]$.
\end{lemma}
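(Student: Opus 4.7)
The plan is to exploit the product structure of $L^{Mo}$ over $U \times [e^a, e^b]$ that is inherited from basicness around $\rho$, pinpoint the vertical slice as the unique candidate flow graph, and verify transversality by a direct tangent-space computation.

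First I would unpack Construction \ref{Con: Morse Cobordism} over $U \times \R$. Because $\tilde L$ coincides there with $(\Lambda_- \cap J^1(U)) \times \R$, the front-difference of any pair of sheets of $L^{Mo}$ on $U \times [e^a, e^b]$ factors as $F_{ij}(x, t) = H(t)\, g_{ij}(x)$, where $g_{ij} = f_i - f_j$ is the sheet-difference on $U$ and $H \in C^\infty([e^a, e^b], \R_{>0})$ satisfies $H'(t) = 0$ only at the endpoints, with $H''(e^b) < 0$ and $H''(e^a) > 0$. By basicness, $g_{12}$ has $\rho$ as its unique critical point in $U$ with $g_{12}(\rho) > 0$ and Morse index $k$, while $\nabla g_{ij}$ is nowhere vanishing in $U$ for $(i, j) \neq (1, 2)$. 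Computing $\nabla F_{ij}$ in the product metric $g_- + dt^2$, the only critical points in $U \times [e^a, e^b]$ are $(\rho, e^a)$ and $(\rho, e^b)$ (coming from $F_{12}$), of Morse indices $k$ and $k+1$; these are the only Reeb chords of $L^{Mo}$ over this region. Since $\nabla g_{12}(\rho) = 0$, the vertical slice $\{\rho\} \times [e^a, e^b]$ is invariant under the gradient flow of $F_{12}$, and the flow along it reduces to the scalar ODE $\dot t = -H'(t) g_{12}(\rho)$, which carries $(\rho, e^b)$ monotonically to $(\rho, e^a)$. This is the trivial tree of the statement.

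The main obstacle is uniqueness. By Lemma \ref{Lem:Flow graph restrictions}, the positive puncture of any interior flow graph contained in $U \times [e^a, e^b]$ must lie above the positive end, so at $(\rho, e^b)$. Thanks to the product metric, the $x$-projection of each edge is (up to reparametrization) a gradient trajectory of some $g_{ij}$ on $U$. A Y-vertex requires three distinct sheet pairs, so any edge whose $x$-projection is constant at $\rho$ must carry the label $(1, 2)$, as $\rho$ is a critical point only of $g_{12}$. Now chase the graph: each leaf edge must end at a Reeb chord and therefore carries label $(1, 2)$; its $x$-projection terminates at $\rho$ along the stable manifold of $\rho$ for $-\nabla g_{12}$ in $U$. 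The unstable manifold $W^u(\rho, e^b)$ and the stable manifold $W^s(\rho, e^a)$ meet in the $1$-dimensional vertical slice (the expected intersection dimension is $(k+1) + (n+1-k) - (n+1) = 1$, and the decoupled product flow realizes this expectation), so the unique gradient trajectory from $(\rho, e^b)$ to $(\rho, e^a)$ in $U \times [e^a, e^b]$ is the slice itself. Edges of type $(i, j) \neq (1, 2)$ have no critical points, so both of their endpoints would have to be interior Y-vertices; tracing sheet labels from a leaf upward, the decoupled product structure together with the fact that every leaf edge forces its adjacent Y-vertex to lie at $x = \rho$ collapses all internal vertices onto the slice, leaving only type-$(1,2)$ edges, which precludes any Y-vertex. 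Hence the graph is a single vertical edge.

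For transversality, decouple the linearization along the slice. At a slice point $(\rho, t_0)$, the tangent to $W^u(\rho, e^b)$ is the sum of the $t$-direction and the $k$-dimensional unstable eigenspace of $\mathrm{Hess}(g_{12})(\rho)$, while the tangent to $W^s(\rho, e^a)$ is the sum of the $t$-direction and the $(n-k)$-dimensional stable eigenspace. These together span $T_{(\rho, t_0)}(U \times [e^a, e^b])$, intersecting in the $1$-dimensional $t$-direction, so the two submanifolds meet transversally along the slice and the trivial tree is transversally cut out.
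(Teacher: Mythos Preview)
Your existence argument and your transversality computation are fine and match the paper's approach (your tangent-space calculation is in fact more explicit than the paper's appeal to the dimension formula from \cite{E}).

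The gap is in uniqueness. You assert that ``every leaf edge forces its adjacent Y-vertex to lie at $x = \rho$,'' but this is not justified. A leaf edge of type $(1,2)$ has $x$-projection lying in the stable manifold $W^s_x(\rho)$ of $-\nabla g_{12}$, which is generally a positive-dimensional submanifold of $U$, not just the point $\rho$. So the adjacent Y-vertex can sit anywhere in $W^s_x(\rho)$. Likewise the root edge has $x$-projection in $W^u_x(\rho)$, which is also positive-dimensional in general. Between these, edges of type $(i,j)\neq(1,2)$ may appear via Y-splittings (e.g.\ $(1,2)\to(1,3)+(3,2)$ and later $(1,3)\to(1,2)+(2,3)$), so the label $(1,2)$ can recur on descendant edges. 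Your ``chase from the leaves'' does not preclude a graph whose root edge moves off $\rho$ along $W^u_x(\rho)$, branches through mixed labels, and eventually feeds into leaf edges approaching $\rho$ along $W^s_x(\rho)$. The transversality of $W^u(\rho,e^b)\cap W^s(\rho,e^a)$ only controls single $(1,2)$-trajectories, not such branched configurations.

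The paper closes this gap with a horizontal energy argument rather than a structural chase. Since the metric is a product, every edge points strictly toward $e^a$ in $t$, so one can slice the graph by $t$-levels. At a Y-vertex the sheet differences satisfy $g_{pq}=g_{pr}+g_{rq}$, so the total $\sum_e g_e(x)$ over all edges at a fixed $t$-slice is conserved across vertices; along each edge it is nonincreasing (the $x$-projection is a reparametrized $-\nabla g_e$-trajectory). At $t=e^b$ the sum is $g_{12}(\rho)$, at $t=e^a$ it is $K\cdot g_{12}(\rho)$ for the $K$ negative punctures. Monotonicity forces $K=1$ and every edge's $x$-projection to be constant, hence at a critical point of its $g_{ij}$; since only $(1,2)$ has one, there are no Y-vertices and the graph is the vertical slice. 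You should replace your leaf-chase paragraph with this argument.
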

\begin{proof} 
        Let $\rho$ be a Reeb chord with a neighborhood in $\tilde{L}$ as above. Then there is a neighborhood of $\rho \times [e^a,e^b]$ such that $L^{Mo}$ is the graph of $f_{i}(s)h(t)$ where we chose some local parametrisation of the two sheets of $\Lambda_-$ between which the Reeb chord lives. If $f_2>f_1$ then set $f=f_2-f_1$. In addition, $h(t)$ is a non-decreasing positive function which locally looks like $t^2$ around $e^a$ and like $-t^2$ around $e^b$ whose derivative is $0$ iff $t=e^a,e^b$. As we are using a product metric the gradient of $-f(s)h(t)$ obeys this product structure. As such the gradient in the $M$ direction is dictated by $-h(t)\grad f(s)$ of $\Lambda_-$ and the gradient in the $[e^a,e^b]$-direction is given by $-f(s)h'(t)$ which is negative if $t \neq e^a,e^b$ and $0$ if $t=e^a,e^b$. So if $\Lambda_-$ has a Reeb chord above the point $s'$. Then the gradient given by $(\Lambda_{\Gamma})_{1}-(\Lambda_{\Gamma})_2(s')$ vanishes. The gradient of $fh$ vanishes on $s \times[e^a,e^b]$ exactly above $s \times \{e^a\}$ and $s \times \{e^b\}$ and points from the positive to the negative end. Thus, we have a basic Morse flow tree above this line with a positive puncture at $s \times \{e^b\}$ and a negative puncture at $s \times \{e^a\}$ at the Reeb chords $\rho_+$ and $\rho_-$ respectively. By the geometric and formal dimension formula from \cite[Definition 3.4./3.5.]{E} it follows directly that this flow tree is transversally cut-out.

        To finish the proof, we only need to eliminate the possibility that there are any other interior flow graphs. Let $\mathcal{G}$ be any connected flow graph completely contained within this region and $e$ any edge. Then $e$ is oriented in such a way that it points towards the $e^a$-slice and no edge of $e$ is ever tangent to a $t$-slice: This can only be possible if $h'=0$ or the difference of the functions supporting $e$ vanishes. The first is excluded as we consider an interior flow graph and the other as we consider a neighborhood which has no intersections of function sheets. So the orientation of any edge is always pointing strictly downwards.

        Now let $e^a=t_0 < \dots t_{n-1} < t_n=e^b$ be a decomposition such that all vertices of the flow graph appear on the $t_i$-slices. Let $(\tilde{f}_2(s)h(t))$ and $(\tilde{f}_1(s)h(t))$ be the functions defining those sheets close to an edge living in a slice $U \times [t_i,t_{i+1}]$. As a shorthand for the function difference we will simply write $e_H(s)=(\tilde{f}_2(s)-\tilde{f}_1(s))$. Now, we define the horizontal energy loss by $H(e)=e_H(s_{i+1})-e_H(s_i)$. We immediately, see that the gradient equation of $L^{Mo}$ along each $t$-slice coincides with the respective gradient equation of $\Lambda_-$ for these function differences up to multiplication by $h(t)$. So $H(e)$ is a non-negative number as the projection of $e$ is a reparametrisation of the gradient of $e_H$ and this number is thus $0$ iff the projection is constant which for a gradient flow is only possible at a Reeb chord. So if $e$ is any edge of the flow graph not projecting to the Reeb chord this will give a positive $H(e)$. Now, if we sum over all $e_H$ for any edge which has a contribution above $t_{i+1}$ we obtain:

        \begin{align*}
            \sum e_H(s_{i+1}) = \sum e_H(s_i) + \sum H(e)
        \end{align*}
        so inductively, we obtain:
        \begin{align*}
            E(\rho) = KE(\rho) + \sum H(e)
        \end{align*}
        which is only possible if $K=1$ and all $H(e)=0$. Meaning that no edge is supported on a sheet different from the ones which support the Reeb chord. Since, we do not consider flow graphs with marked points interior $2$-valent vertices are ruled out and the only possible graph remains the trivial one. 
\end{proof}

\begin{proposition}(Energy bound) \label{Prop:Energy bound}
    Let $L^{Mo}$ be a Morse cobordism above $J^1(M \times [e^a,e^b])$ and $B_\rho$ be a basic neighborhood of a Reeb chord $\rho$ such that $\hat{L}$ is cylindrical above $B_\rho$ and $g+dt^2$ a pseudo-flat metric on $M\times [e^a,e^b]$. Then any Morse flow graph G with the condition that (i) $\rho$ is the unique positive puncture and (ii) it leaves $B_\rho$ has a minimal energy loss $e^aE_\rho$.
\end{proposition}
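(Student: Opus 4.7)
My plan is to decompose the energy budget of $G$ into horizontal and vertical pieces as in the closing argument of Lemma~\ref{Lem:Trivial tree}, and to exploit that $h(t)\geq e^a$ throughout $[e^a,e^b]$ to pass to a lower bound on the total $F$-drop.

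First I would use Lemma~\ref{Lem:Flow graph restrictions} to pin down the combinatorics of $G$: puncture vertices are $1$-valent, so the unique initial edge $e_0$ emanates from $\rho$ at the top slice $t=e^b$ and starts at $F(s_\rho,e^b)=e^b E_\rho$, where $F(s,t):=h(t)f(s)$. If $e_0$ stayed inside $B_\rho\times[e^a,e^b]$, Lemma~\ref{Lem:Trivial tree} would force it to be the trivial strip ending at the bottom copy of $\rho$, and since that puncture is also $1$-valent this would be all of $G$, contradicting the hypothesis that $G$ leaves $B_\rho$. Therefore some edge crosses the lateral boundary $\partial B_\rho\times[e^a,e^b]$.

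Next I would bound the drop of $F$ along each edge. Writing the gradient flow in the pseudo-flat metric $g+dt^2$ as $\dot s=-h(t)\nabla_s f$ and $\dot t=-h'(t)f(s)$, the drop along an edge splits into the non-negative integrals $\int h(t)^2|\nabla_s f|^2\,d\tau$ and $\int h'(t)^2 f(s)^2\,d\tau$. Rewriting the horizontal integrand as $h(t)\bigl(-\tfrac{d}{d\tau}f(s(\tau))\bigr)$ and using $h(t)\geq e^a$, the horizontal part is at least $e^a\,H(e)$, where $H(e)\geq 0$ is the horizontal drop of Lemma~\ref{Lem:Trivial tree}. Summing over all edges and telescoping at interior branching vertices, the total energy loss of $G$ is bounded below by $e^a\sum_e H(e)$.

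To finish, I would show $\sum_e H(e)\geq E_\rho$ whenever $G$ leaves $B_\rho$. The argument at the end of the proof of Lemma~\ref{Lem:Trivial tree} shows that if $G$ had remained inside $B_\rho$ then $\sum_e H(e)$ would vanish by horizontal conservation. Once $G$ leaves $B_\rho$, the monotonicity of $f$ along each gradient trajectory together with the basicity of the neighborhood (in particular, that the two sheets defining $f$ only meet over $s_\rho$) prevents the graph from recovering the value $E_\rho$ at its endpoints; tracking the horizontal contributions across sheet-changing Y-vertices with the rules from \cite{E} then gives the required lower bound. The principal obstacle I foresee is precisely this sheet-level telescoping across Y-vertices, where the pairs of sheets supporting incoming and outgoing edges can differ; I would handle it by a local case analysis using the vertex types enumerated in \cite{E}, together with the strict inequality $f<E_\rho$ on $\partial B_\rho$ inherited from basicity of $B_\rho$ around $\rho$.
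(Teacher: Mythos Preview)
Your approach differs from the paper's and has a genuine gap at the final step. You aim for $E_\rho=\ell(\rho)$ (the Reeb chord action) via the inequality $\sum_e H(e)\ge\ell(\rho)$, but this target is too strong. The horizontal drop $H(e)$ is only defined where $L^{Mo}$ is cylindrical, i.e.\ over $B_\rho$, so at best you obtain $(\text{energy})\ge e^a\sum_{e\subset B_\rho}H(e)$. Telescoping that sum produces boundary terms at each exit through $\partial B_\rho\times[e^a,e^b]$, and these are strictly positive values of sheet-difference functions (in a basic neighborhood the front projection is injective, so sheets never meet, contrary to your parenthetical). Hence $\sum_{e\subset B_\rho}H(e)<\ell(\rho)$ whenever $G$ exits. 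Concretely: if $G$ leaves $B_\rho$ and ends at a single negative puncture $\rho'_-$ with $\ell(\rho')$ close to $\ell(\rho)$, its energy $e^b\ell(\rho)-e^a\ell(\rho')$ is roughly $(e^b-e^a)\ell(\rho)$, far below $e^a\ell(\rho)$ for $b$ close to $a$. No $Y$-vertex case analysis can repair this; the bound you are trying to prove is simply false.

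The paper's argument is much more modest and sidesteps all of this. One fixes a slightly smaller $B'_\rho\subset B_\rho$ and looks only at the part of $G$ over the shell $B_\rho\setminus B'_\rho$. Cylindricality and the product metric there mean the $M$-projection of each edge is a reparametrized gradient flow line of a sheet-difference function in $\Lambda$; since $G$ exits $B_\rho$, some chain of these projections connects $\partial B'_\rho$ to $\partial B_\rho$ and hence has base length at least $d(\partial B'_\rho,\partial B_\rho)$. On the shell every $\|\grad(f_i-f_j)\|$ is bounded below by some $m>0$ (the unique critical point $s_\rho$ lies in $B'_\rho$), giving energy at least $e^a\,m\,d(\partial B'_\rho,\partial B_\rho)=:e^aE_\rho$. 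This $E_\rho$ is typically much smaller than $\ell(\rho)$, but any positive constant suffices for the application in Theorem~\ref{Thm: Basic Cobordism}.
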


\begin{proof}

    This is almost obvious: Take a slightly smaller neighborhood $B'_\rho$ and consider a Morse flow graph $G$ as above and restrict it to $(B_\rho \setminus B'_\rho) \times [e^a,e^b]$. By $g$-invariance and the cylindrical property. The projection of $G|_{\{B_\rho \setminus B'_\rho\}}$ to $J^1(B_\rho\setminus B'_\rho)$ consists of a collection of gradient flow lines. As $G$ originally left $B_\rho\times [e^a,e^b]$ there must be some sequence of flow lines $(e_1,\dots e_n)$ which lift to the upper sheet of the Reeb chord and intersect $\partial B_\rho\times [e^a,e^b]$. The length of the sum of the projected flow lines is bounded below by a geodesic and the energy loss is bounded below by the minimum of all gradient differences of all sheets. As we removed a neighborhood of the Reeb chord the norm of the gradient differences is bounded below. Thus, we arrive at an estimate $E$ which is the minimum norm of all gradient differences and the minimal length of a geodesic in the base. To obtain an estimate for the original graph, we must adapt the original gradient equation by multiplying it by $e^a$. So $E_\rho = \text{min} ||\grad(f_2-f_1)]]d(\partial B_\rho, \partial B'_\rho)$.
\end{proof}

\section{Weakly perfect surgery cobordisms for satellites}

We will now identify hypotheses which ensure that we can deform $\tilde{L}$ from Construction \ref{Con:Basic cobordism} into a 
Lagrangian cobordism $L$ which is weakly perfect in the sense
of Definition  \ref{def: weakly perfect}.

To count curves in the cobordism, we use Theorem \ref{thm: ehk flow tree count}. As a consequence, we are limited to Legendrian cobordisms which are contained in jet bundles, and which moreover have the property that all index 1 Reeb chords of the ends are contained in the jet bundle as well.

\begin{definition}
    Let $\Lambda, \Lambda' \subset Y$ be Legendrian surfaces. We say that $\Lambda$ is a \textbf{satellite} of $\Lambda'$ if $\Lambda$ is contained in a Darboux neighborhood $U$ of $\Lambda'$. 

    In this case we may consider the fiber rescalings
    $\Lambda_\sigma \subset U$.      
    We say that $\Lambda$ is \textbf{dominated} by $\Lambda'$ if $\Lambda'$ has no index $1$ Reeb chords and $\Lambda_\sigma \subset U$ for all $0<\sigma\leq1$ has no index $1$ Reeb chords escaping $U$.  
\end{definition}

\begin{theorem} \label{Thm: Basic Cobordism}
    Suppose given some $\Lambda' \subset Y$ and some $\Lambda$ dominated by $\Lambda'$.  Suppose given an admissible surgery disk $D$ for $\Lambda$.  Assume in addition: 
        \begin{enumerate}
        \item $\Lambda$ is basic (Def. \ref{legendrian basic}) around all its index $1$ Reeb chords.
        \item The Darboux chart $J^1  \Lambda'$ is compatible with the surgery-defining chart of Figure \ref{Fig:Disk-surgery}. 
        \item There are no index-$1$ Reeb chords of $\Lambda$ above the image of $D \subset J^1(\Lambda') \to \Lambda'$.  
    \end{enumerate}
    Then the disk surgery cobordism of \ref{Con:Basic cobordism} is isotopic to a weakly perfect
    disk surgery cobordism. 
\end{theorem}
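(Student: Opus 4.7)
The plan is to reduce the curve count in $\tilde L$ to a flow-tree count in the associated Morse cobordism, and then use the hypotheses together with the freedom in Construction~\ref{Con:Basic cobordism} to shrink the cobordism so as to rule out all undesired rigid trees.

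First I would use the satellite assumption and hypothesis~(2) to isotope $\tilde L$ so that $\tilde L\subset J^1(\Lambda')\times\R$ with the surgery disk $D$ inside the prescribed Darboux chart.  The domination hypothesis ensures that fiber rescaling keeps index-$1$ Reeb chords of $\Lambda_\sigma$ inside $J^1(\Lambda')$, so the flow-tree technology of Theorem~\ref{thm: ehk flow tree count} applies.  I then pass to the Morse cobordism $L^{Mo}\subset J^1(\Lambda'\times[e^a,e^b])$ equipped with a pseudo-flat metric that agrees with $g_\pm+dt^2$ over basic neighborhoods of the index-$1$ chords, and I note that Construction~\ref{Con:Basic cobordism} lets me take $b-a$ as small as desired later.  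To exhibit the trivial strips required by condition~\eqref{wpdef trivial strips} of Definition~\ref{def: weakly perfect}, I fix an index-$1$ Reeb chord $\rho$ of $\Lambda$: hypothesis~(1) gives a basic neighborhood $B_\rho$, hypothesis~(3) lets me shrink $B_\rho$ to be disjoint from $D$ so that $\tilde L$ is cylindrical above $B_\rho$, and Lemma~\ref{Lem:Trivial tree} then yields the unique transversely cut-out rigid flow tree in the vertical slice $\rho\times[e^a,e^b]$, which Theorem~\ref{thm: ehk flow tree count} converts into the trivial strip whose boundaries are vertical in the trivialization~\eqref{trivialization}.

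The remaining task is to enforce condition~(1) of Definition~\ref{def: weakly perfect}.  Let $G$ be any other rigid flow tree with positive puncture at $\rho_+\in R_1(\Lambda_D)$ and negative puncture at $\rho_-\in R_1(\Lambda)$; by Lemmas~\ref{Lem:Trivial tree} and~\ref{Lem:Flow graph restrictions} such a $G$ must exit the basic neighborhood of $\rho_+$.  Positivity of energy then forces $e^b\ell(\rho_+)\ge e^a\ell(\rho_-)$, i.e.\ $\ell(\rho_+)/\ell(\rho_-)\ge e^{a-b}$.  Since $R_1(\Lambda)$ and $R_1(\Lambda_D)$ are finite, only finitely many pairs satisfy $\ell(\rho_+)<\ell(\rho_-)$ and each such ratio is strictly below $1$; choosing $b-a$ small enough that $e^{a-b}$ exceeds all such ratios excludes every such $G$.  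The same estimate rules out higher-genus flow graphs and hence, by the limiting argument of~\cite{E}, all higher-genus curves.  A small perturbation of the metric supported away from the basic neighborhoods achieves transversality of the finite list of surviving trees without disturbing the bounds.

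The main obstacle is the delicate order of operations in the final step: the basic neighborhoods $B_\rho$ and the associated energy thresholds must be fixed before shrinking $b-a$ via Construction~\ref{Con:Basic cobordism}, and the subsequent transversality perturbation must avoid these neighborhoods while preserving exactness and cylindricity.  No new technology is required beyond careful sequencing of constructions already in place.
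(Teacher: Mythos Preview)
Your outline follows the paper's strategy closely, but there is a genuine gap in your treatment of condition~\eqref{wpdef trivial strips} of Definition~\ref{def: weakly perfect}.  That condition demands not merely the existence of the trivial strip from $\rho_+$ to its counterpart $\rho_-$, but its \emph{uniqueness} among rigid curves with those asymptotics.  Lemma~\ref{Lem:Trivial tree} gives you uniqueness only among flow graphs that stay inside the basic neighborhood; you correctly observe that any other graph $G$ must exit $B_\rho$, but you never exploit this.  Your subsequent energy inequality $e^b\ell(\rho_+)\ge e^a\ell(\rho_-)$ is just nonnegativity of $E_\omega$, which holds automatically and yields no contradiction when $\ell(\rho_+)=\ell(\rho_-)$ --- in particular when $\rho_+$ and $\rho_-$ are the two ends of the same chord.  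So a nontrivial tree from $\rho_+$ back to its own counterpart is not ruled out by your argument, and weak perfectness is not established.

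The paper closes exactly this gap via Proposition~\ref{Prop:Energy bound}: a flow graph that leaves $B_\rho$ must lose at least $e^a E_\rho$ in energy, giving the sharper inequality $e^b\ell(\rho_+)-e^a\ell(\rho_-)\ge e^a E_\rho$.  One then fixes the basic neighborhoods and the constants $E_\rho$ first, and only afterwards chooses $b-a$ small enough that $e^b\ell(\rho_{j})-e^a\ell(\rho_{i})<e^a E_{\rho_j}$ for every pair with $\ell(\rho_j)\le\ell(\rho_i)$ (including $i=j$).  This simultaneously rules out action-decreasing trees and nontrivial trees between equal-length chords.  Your closing remark about ``associated energy thresholds'' suggests you sensed this ingredient, but it never entered the estimate; inserting Proposition~\ref{Prop:Energy bound} at the point where you note $G$ exits $B_\rho$ is what makes the argument go through.
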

\begin{proof}
We will count Morse flow trees.  
Let $\rho_1,\dots,\rho_n$ be an order on the index-$1$ Reeb chords of $\Lambda$ such that the action is non-decreasing. 
Let $\rho_{i,+}$ denotes the index-$2$ Reeb chord above the positive end and $\rho_{i,-}$ the index-$1$ Reeb chord above the negative end originating from $\rho_{i}$. 

Fix a pseudo-flat metric $g+dt^2$ on $\Lambda'$.  
Let $E_1, \dots E_n$ be the energy bounds from $g$ as in Proposition \ref{Prop:Energy bound}.  Let $a<b$ be two real numbers such that $e^b \ell(\rho_{j,+}) - e^a\ell(\rho_{i,-})-e^aE_j$ is negative if $i\leq j$. This is obviously true if $a=b$ as the order of the Reeb chords was energy non-decreasing. So we can find a pair $(a,b)$ such that this is true for all $i \leq j$.

By Construction \ref{Con:Basic cobordism} we find a cobordism of length less than $e^{b-a}$. WLOG the lower end is at $e^a=1$.
Any Morse flow graph with one positive puncture at 
$\rho_{j,+}$ and one negative puncture at $\rho_{i,-}$ for $i < j$ must leave a neighborhood of $\rho_{j,+}$ which we used to define the $E_j$, and thus have energy more than $E_j$. Recall that the energy of a Morse flow graph in such a cobordism is given by $e^{b'} \ell(\rho_{j,+}) - \ell(\rho_{i,-})$. Since $b'\leq b$ this value must also be smaller than $E_j$.  This is a contradiction, so there could not have been such a curve to begin with.  In addition, by Lemma \ref{Lem:Trivial tree} there is exactly one flow graph properly contained in such a neighborhood which coincides with the trivial cylinder. 

So any other flow graph must leave such a neighborhood, but again the energy constraint of Proposition \ref{Prop:Energy bound} rules out any such graph by a similar argument.  

Thus, the flow graph count obeys the following:
    \begin{enumerate}
        \item In the slice $\rho_{i} \times [a,b]$ there is exactly one flow graph with one positive puncture at $\rho_{i,+}$ and one negative puncture at $\rho_{i,-}$.  It is homotopic to to a trivial cylinder, and its moduli space is transversely cut out.  There are
        no other Morse flow graphs with these endpoints.
        \item If $j<i$ then there are no flow graphs with positive puncture at $\rho_{j,+}$ and negative puncture at $\rho_{i,-}$.
    \end{enumerate}
Now  Theorem \ref{thm: ehk flow tree count} asserts the 
    curves 
    for $L_{\sigma,\delta}$ with respect to $J_{\sigma,\delta}$ 
    correspond to the above Morse trees.  The above enumerated properties
    of said trees then translate to the assertion that $L_{\sigma,\delta}$
    is weakly perfect.
\end{proof}

\section{Perfect disk-surgery cobordisms for cubic planar graph Legendrians}

We turn to the study of cubic planar graph Legendrians, introduced and studied in \cite{Treumann-Zaslow, Casals-Zaslow, SSZ, Scharitzer-Shende}.  Given
an edge $e$ of a graph $\Gamma$, we write $\Gamma_e$ for the result of flipping $e$, and also preserve the notation $e$ for the new edge in $\Gamma_e$. 

Casals and Zaslow have shown that
$\Lambda_\Gamma$ and $\Lambda_{\Gamma_e}$
are related by Legendrian disk surgery \cite[Theorem 4.21]{Casals-Zaslow}.  For technical reasons, we cannot directly apply Theorem \ref{Thm: Basic Cobordism} to their construction.\footnote{Specifically, this is because the contact form on the Casals-Zaslow
Darboux chart describing the surgery on 
$\Lambda_\Gamma$
differs from the standard contact form on $J^1 S^2$.  We do not know whether the Casals-Zaslow Darboux chart extends to a chart appropriate for our flow tree counting methods above, nor whether it extends to a contact form ensuring that the Legendrians are Reeb positive.  
(By contrast, the Legendrians are self-evidently Reeb positive in the original contact form.)  The sensitivity of our discussions to contact forms is an artifact of the present absence of a fully developed `skein-valued SFT'.}
Instead, we use other results of the same article \cite{Casals-Zaslow} to construct a different cobordism, to which we can apply Morse flow technology.  Unlike the cobordism of Construction \ref{Con:Basic cobordism}, this new cobordism will not be arbitrarily short, and we will have to use more of its explicit structure to rule out unwanted Morse flow graphs.

\subsection{Cobordism}

\begin{construction} \label{CPG: Initial choices}
    Consider an unknotted Legendrian sphere $U\subset \R^5 = J^1 \R^2$, with the standard flying saucer front in $\R^3$.  We choose it so the base projection to $\R^2$ has image the unit disk $D_1$.

     Let $\Gamma$ be a cubic planar graph on $S^2$, fix $e$ a distinguished edge of $\Gamma$. Then there is an embedding of $\Lambda_\Gamma$ a standard neighborhood of  $U \subset \R^5$ which fulfills the following properties:

    \begin{enumerate}
        \item The base projection of the lift of the graph $\Gamma \subset \Lambda_\Gamma$ is contained within $D_{1-\epsilon}$ for the exception of a small neighborhood of the edge $e$ which lies outside of $D_{1-\epsilon}$.
        \item The index $2$ Reeb chords are above $D_{\epsilon}$ and the index $1$ Reeb chords are contained above $D_{1-2\epsilon}\setminus D_{\epsilon}$ near the upper hemisphere of $U$. 
        \item \label{pair of sheets gradient}  For any pair of sheets with one on the upper and one on the lower hemisphere, all local gradient differences  are pointing outward, and are transverse to $D_{\delta}$ for $1-\epsilon>\delta>\epsilon$.
    \end{enumerate}
\end{construction}

\begin{proof}
    The first $2$ conditions can be initially met by observing that the index $1$ Reeb chords are defined by the initial choice of function $f_\Gamma$ and then choosing an appropriate identification with the Legendrian unknot $S^2 \subset \R^5$.

    The second one is easily observed by seeing that if we rescale to some $\sigma \Lambda_\Gamma \subset J^1(S^2)$ for small $\sigma > 0$, the gradient differences between the upper and lower hemisphere eventually become dominated by the gradients of the upper and lower hemisphere of $S^2$ which are non-vanishing for $D_{\delta}$ with $1-\epsilon>\delta>\epsilon$. Thus statement \eqref{pair of sheets gradient} is true and thus for small $\sigma>0$ the statement is also true for $\sigma \Lambda_\Gamma$.
\end{proof}

We now construct a disk surgery cobordism.  Let us first note some differences our previous cobordism in Construction \ref{Con:Basic cobordism}.  Here we use a slightly different presentation of the Clifford torus $\T_{Cl}$, namely as the Legendrian associated to the tetrahedron graph. This allows us to use the Legendrian weave calculus established by Casals-Zaslow \cite[Section 4]{Casals-Zaslow}. Additionally, the non-cylindrical region of the cobordism will not be local to the disk; instead, it sits above the entire front in Figure \ref{Fig:CPG-Cobord-1}. 

\begin{construction} \label{CPG: Cobordism}
    Let $\Lambda_\Gamma$ be a cubic planar graph Legendrian associated to a graph $\Gamma$ whose front projection satisfies the properties of Construction \ref{CPG: Initial choices}. Then for each edge $e$ bounding the large face of $\Lambda_\Gamma$ there is an embedding of $\T_{Cl}$ such that there is a cobordism $\tilde{L}$ from $\Lambda_\Gamma \sqcup \T_{Cl}$ to $\Lambda_{\Gamma_e}$ where $\Gamma_e$ is the cubic planar graph obtained from $\Gamma$ by flipping the edge $e$, see Figure \ref{Fig:Edge-Flip}. In addition, this cobordism is trivial inside $J^1(\bar{D}_{1-\epsilon})$ and consists of Legendrian isotopies, an index $1$ and an index $0$-surgery.
\end{construction}

\begin{figure}
    \begin{picture}(50,175)
    \put(-75,0){\includegraphics[width=10cm, trim=0cm 0cm 0cm 0cm, clip]{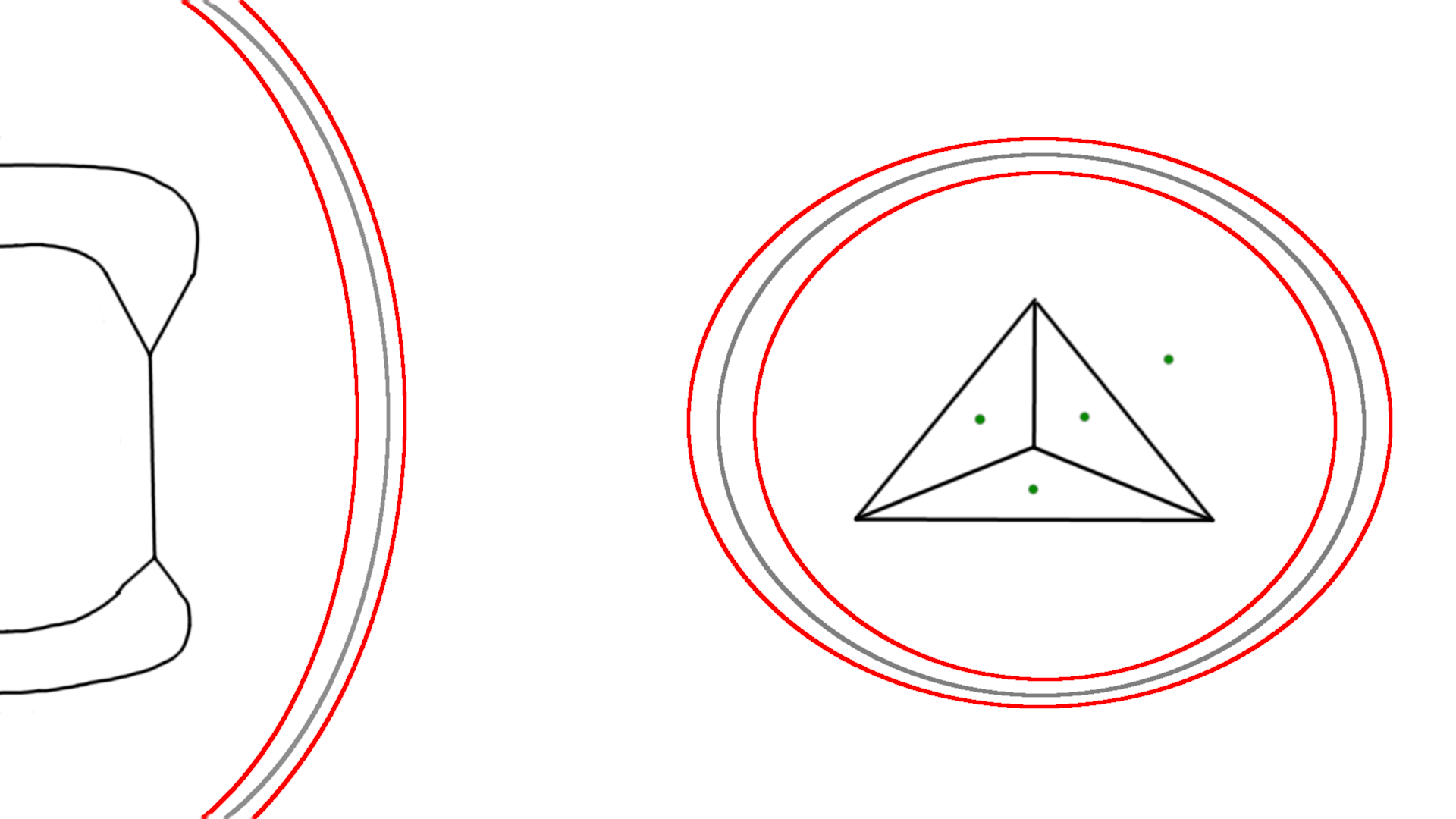}}
    
    \end{picture}
    \caption{The base projection of $\Lambda_\Gamma$ containing both the tetrahedron Legendrian (on the right) and a neigborhood of the the edge $e$. Depicted in black are the self-intersections of the front projection coming from $\Gamma \subset S^2$ while the grey lines are the self-intersections coming from the inclusion when satelliting $J^1(S^2)\rightarrow S^5$. Depicted in red are cusp-edges. The index-$1$ Reeb chords in this area are indicated in green. For a more detailed explanation, see \cite{Casals-Zaslow}.}  
    \label{Fig:CPG-Cobord-1}
\end{figure}

\begin{figure}
    \begin{picture}(50,125)
    \put(-150,0){\includegraphics[width=15cm, trim=0cm 4cm 0cm 0cm, clip]{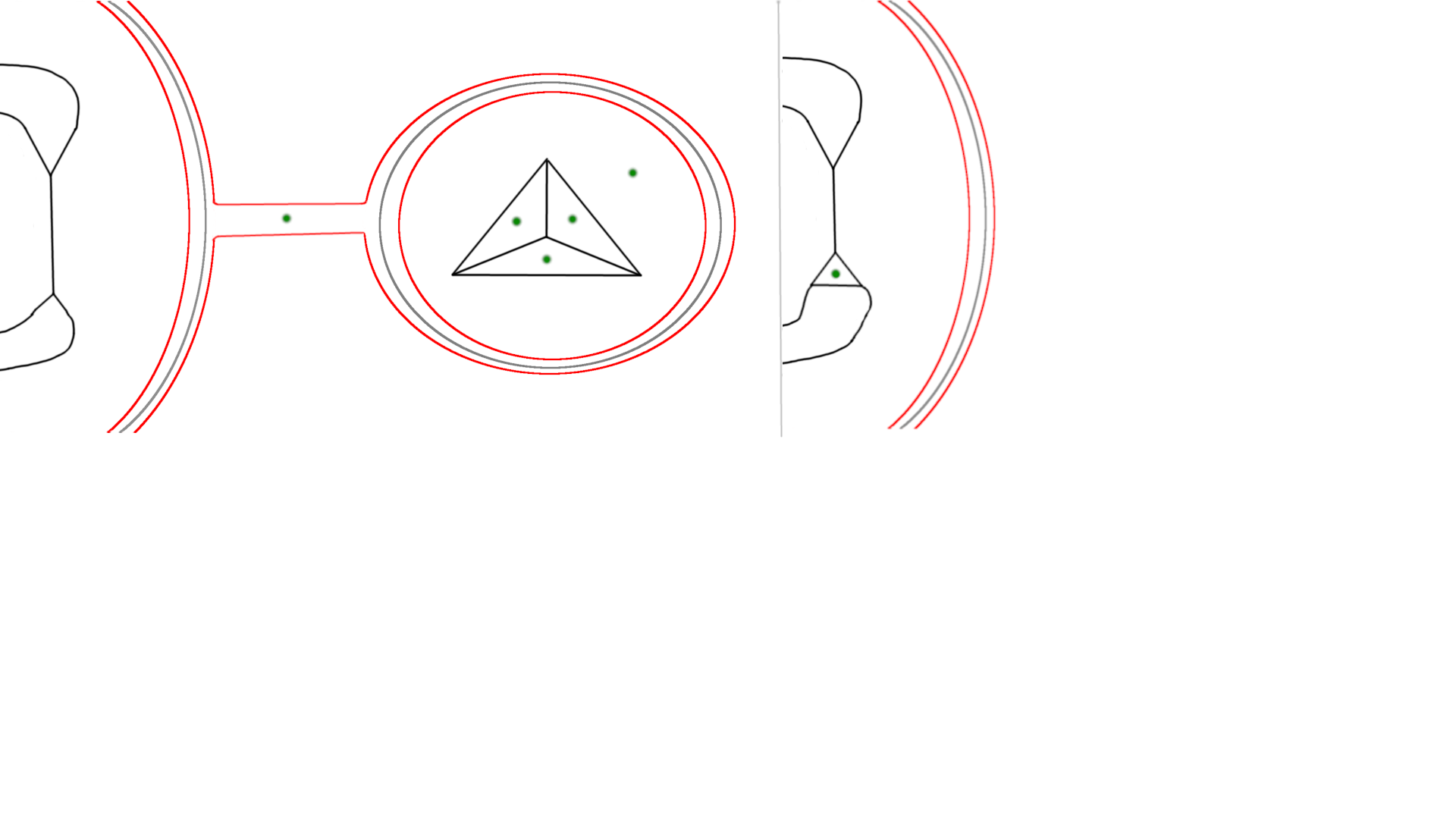}}
    
    \end{picture}
    \caption{On the left the resulting Legendrian after the index $0$-surgery which corresponds to a $1$-handle attachment. On the right the resulting simplified Legendrian. This Legendrian is obtained from the original Legendrian by replacing the vertex $v$ with a small triangle.}  
    \label{Fig:CPG-Cobord-2}
\end{figure}

\begin{proof}
    We construct a cobordism as a composition of pieces from \cite{Casals-Zaslow}.  

    \begin{enumerate}
        \item 
    Consider a neighborhood $N$ of $e$ as in Figure \ref{Fig:CPG-Cobord-1} which also contains the cusp-edges of $\Lambda_\Gamma$. Place $\T_{Cl}$ such that the base projection of $\T_{Cl}$ and $\Lambda_\Gamma$ are disjoint, see Figure \ref{Fig:CPG-Cobord-1}. Then we follow the same construction as in \cite[Theorem 4.10.4]{Casals-Zaslow}. Basically this is a Legendrian index $0$ surgery between the outer front cusp-edges, depicted in the Figure on the left of \ref{Fig:CPG-Cobord-2}. Followed by a Legendrian isotopy resulting in the graphs being       ``added" along a vertex each where the vertex on $\T_{Cl}$ is chosen arbitrarily and any vertex $v$ adjacent to $e$ will suffice. The two stages after the surgery and after the isotopy are depicted on the right in Figure \ref{Fig:CPG-Cobord-2}. The resulting Legendrian is associated to the cubic planar graph $\Gamma'$ is the same as replacing the chosen vertex $v$ with a triangle where each edge $e_i$ formerly attached to $v$ is now attached to a unique vertex of the triangle.
    \item 
    The second stage is an index $1$ surgery in the sense of \cite[Theorem 4.10.2)]{Casals-Zaslow}. In our case, we apply it to the edge $e'$ connecting the triangle to the former vertex $v'$ of $e$ which was not chosen in the previous stage. This move consists of an initial isotopy, an index $1$ surgery and then a final isotopy. We will not in detail describe this isotopy but just remark that it can be made in an arbitrarily small neighborhood of the edge $e'$. The resulting Legendrian is depicted on the left of Figure \ref{Fig:CPG-Cobord-3}.
    \item 
    Lastly, the Legendrian right now is not yet a Legendrian associated to a cubic planar graph: In the previous step we introduced an index $0$ Reeb chord on the face obtained from the triangular face and the "opposite" face before. However, as can be seen in Figure \ref{Fig:CPG-Cobord-3}. There is an additional index $1$ Reeb chord inherited from the triangular face. So a final isotopy is designed to merge this index $0$ and index $1$ Reeb chord. Again, this can be done outside $\bar{D}_{1-\epsilon}$. After this final isotopy, we have obtained a Legendrian which is associated to a cubic planar graph $\Gamma_e$ obtained from $\Gamma$ by flipping the edge $e$. This Legendrian is depicted on the right of Figure \ref{Fig:CPG-Cobord-3}.        
\end{enumerate}

\end{proof}

\begin{figure}
    \begin{picture}(50,150)
    \put(-60,0){\includegraphics[width=12cm, trim=0cm 4cm 5cm 0cm, clip]{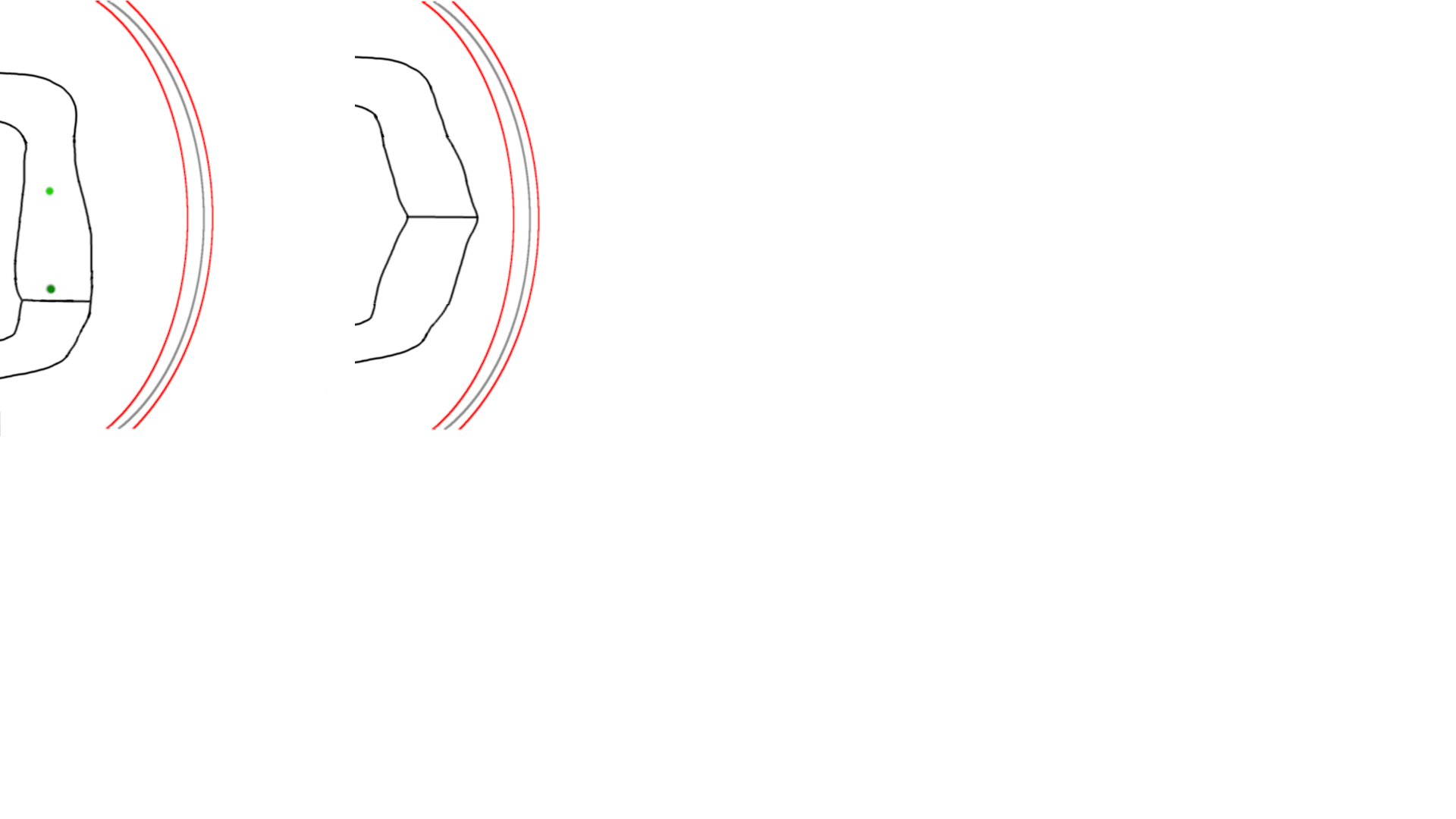}}
    
    \end{picture}
    \caption{On the left the Legendrian obtained from index $1$ surgery. The additional index $0$ Reeb chord is depicted in light green. On the right is the Legendrian after a small isotopy cancelling the Reeb chords and normalising the picture. This Legendrian has all the original Reeb chords.}  
    \label{Fig:CPG-Cobord-3}
\end{figure}

    The first stage of the above cobordism will be called the ``Clifford sum'' cobordism (following \cite[Theorem 4.10]{Casals-Zaslow}) at the vertex $v$ which was chosen.

\begin{figure}
    \begin{picture}(50,150)
    \put(-125,0){\includegraphics[width=12cm, trim=0cm 2cm 0cm 0cm, clip]{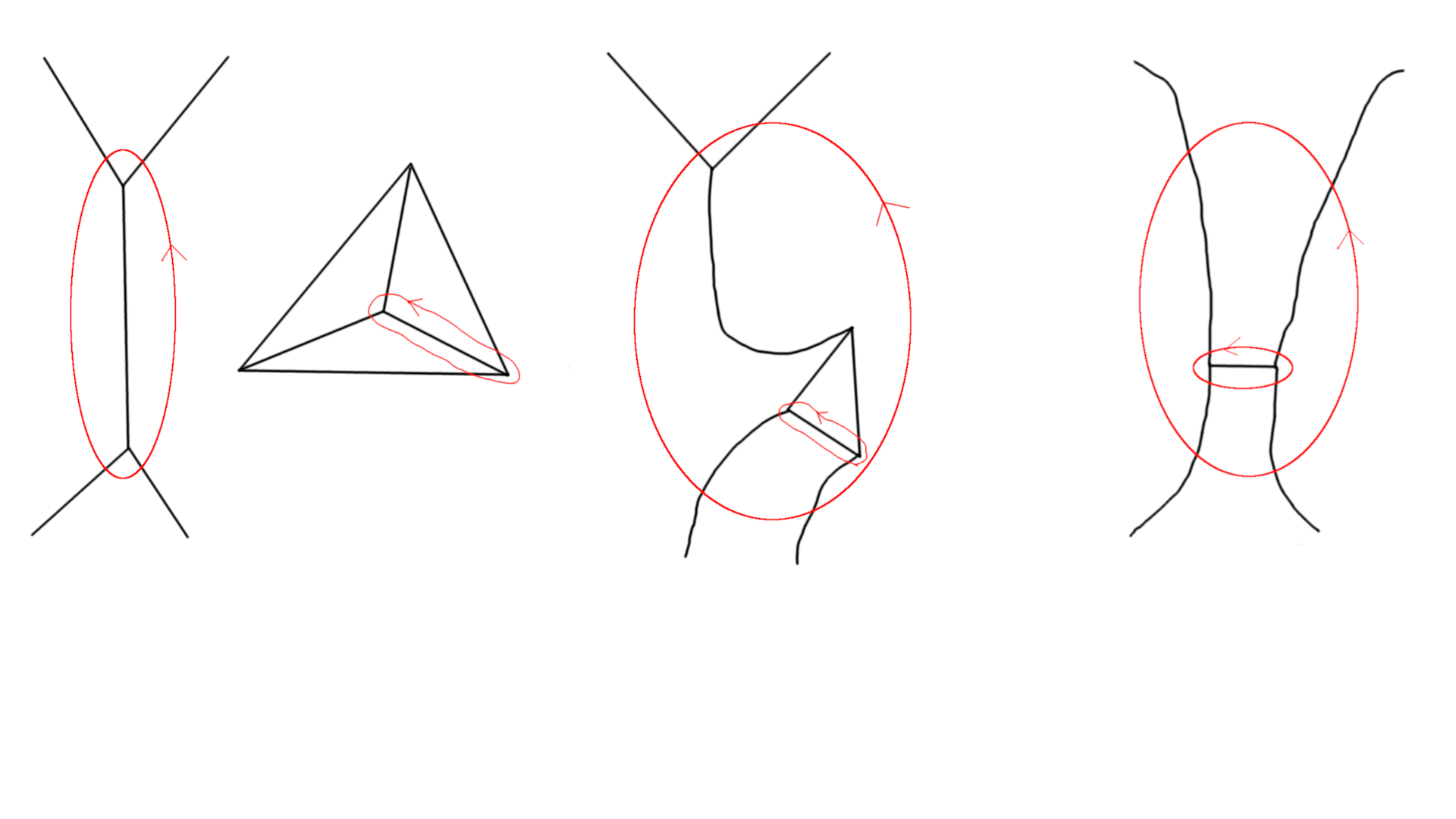}}
    
    \end{picture}
    \caption{Proof of Lemma \ref{fate of e}. Left: the negative end of the cobordism.  Middle: after index zero surgery.   Right: after index one surgery. Note that the two positively oriented lifts of the edges at the negative end of the edges have opposite orientation when oriented to the positive end.
    }  
    \label{Fig:Propagate-1}
\end{figure}

\begin{lemma} \label{fate of e}
    If we depict the cobordism as a movie of base projections, we see one of the edges $e_T$ of the tetrahedron graph get 
    connect summed with the edge $e$ of $\Gamma$.  
  
    Then the associated cycles $[-e_T] \in H_1(\mathbb{T}_{Cl})$ and 
    $[e] \in H_1(\Lambda)$
    have the same image in
    $$H_1(\mathbb{T}_{Cl}) \to H_1(\widetilde{L}) \leftarrow H_1(\Lambda)$$ 
\end{lemma}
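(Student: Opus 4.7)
The plan is to exhibit an explicit 2-chain in $\widetilde L$ whose boundary realises the claimed equality of homology classes. First, I would recall that for a cubic planar graph Legendrian, viewed as the double cover of $S^2$ branched at the vertices of $\Gamma$, a canonical cycle representative of the class $[e] \in H_1(\Lambda_\Gamma)$ is the union of the two sheet-lifts of $e$, joined at the two branch points sitting above the endpoints of $e$; the same description gives a cycle representative of $[e_T] \in H_1(\mathbb{T}_{Cl})$. In the base projection these are just a pair of arcs traversing $e$ (respectively $e_T$) with the orientations inherited from the branched cover structure.

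Next, I would trace these representatives through the three stages of Construction \ref{CPG: Cobordism}. The only stage that alters the topology in the relevant region is the index-$0$ Clifford-sum surgery, which attaches a $1$-handle at the chosen vertex $v$ of $\Gamma$ and a vertex of the tetrahedron. The attaching region of this $1$-handle is the neighborhood shown in Figure \ref{Fig:Propagate-1}. The core of the handle, together with the small strips of $\Lambda_\Gamma$ and $\mathbb{T}_{Cl}$ lying between the two branch points and the attaching spheres, assembles into a $2$-chain $\Sigma \subset \widetilde L$ which on the negative end bounds precisely the two cycle representatives chosen above (one coming from $\Lambda_\Gamma$, one from $\mathbb{T}_{Cl}$). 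The subsequent index-$1$ surgery and normalising isotopies are supported outside $\Sigma$, so they leave $\Sigma$ intact up to isotopy.

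The step I expect to be the main obstacle is the determination of the sign, i.e.\ verifying that the boundary of $\Sigma$ pairs $[e]$ with $[-e_T]$ rather than with $[e_T]$. This requires keeping track of how the two sheets of the branched double cover of $\Lambda_\Gamma$ interchange with those of $\mathbb{T}_{Cl}$ as one crosses the $1$-handle. Concretely: orient the two arcs on the $\Lambda_\Gamma$-side coherently as in $[e]$, push each arc across the handle along $\Sigma$, and record its orientation on the $\mathbb{T}_{Cl}$-side. The reversal noted in the caption of Figure \ref{Fig:Propagate-1} is the statement that this transport reverses orientation on each of the two lifts, so the induced boundary class on the $\mathbb{T}_{Cl}$-end is $-[e_T]$. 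Hence $\partial \Sigma = [e] - (-[e_T])$ in $H_1(\Lambda_\Gamma \sqcup \mathbb{T}_{Cl})$, proving that $[e]$ and $[-e_T]$ have the same image in $H_1(\widetilde L)$.
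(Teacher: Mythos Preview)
Your construction of the $2$-chain $\Sigma$ does not work. The core of the $1$-handle attached during the index-$0$ ambient surgery is a $1$-cell (an arc), not a $2$-chain, so it cannot be combined with $2$-dimensional strips in the surfaces to yield a well-defined $2$-chain in $\widetilde L$. More fundamentally, your claim that the index-$1$ surgery is irrelevant is incorrect: after attaching only the $1$-handle, the cobordism deformation retracts onto $\Lambda_\Gamma \vee \mathbb{T}_{Cl}$, so its first homology is still $H_1(\Lambda_\Gamma)\oplus H_1(\mathbb{T}_{Cl})$, in which $[e]$ and $[e_T]$ are linearly independent. Hence no $2$-chain supported in the index-$0$ region can have boundary $[e]+[e_T]$. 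The relation $[e]=[-e_T]$ in $H_1(\widetilde L)$ is produced precisely by the $2$-handle coming from the index-$1$ surgery: its attaching circle, the cycle associated to the edge $e'$ in the post--Clifford-sum graph, is what becomes null-homologous, and one has to check that this cycle is homologous to $[e]+[e_T]$ in the connect-summed surface.

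This is why the paper's argument (Figure~\ref{Fig:Propagate-1}) follows the cycle representatives through \emph{all three} stages of the movie, including the right-hand panel after the index-$1$ surgery. The actual content of the lemma is the observation, recorded in the caption, that when the two positively oriented edge-lifts at the negative end are propagated to the positive end they arrive with opposite orientations on the new edge; the required $2$-chain is then the trace of these cycles together with the core disk of the $2$-handle. Your sign discussion at the end is on the right track, but it must be carried out across the index-$1$ surgery, not the index-$0$ one.
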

\begin{proof}
    Note the cycles associated to opposite edges of the tetrahedron graph agree up to a shift in orientation.  Now the proof is given in Figure \ref{Fig:Propagate-1}. 
\end{proof}

\subsection{Perfectness}

We now show the  Morse cobordism $L^{Mo}$ associated to $\tilde{L}$ is perfect. 

We fix notation. The positive end of $L^{Mo}$ is $\Lambda_{\Gamma_e}$ and the negative end is $\Lambda_\Gamma \sqcup \T_{Cl}$. By construction the set $J^1(D_{1-\epsilon})$ contains all Reeb chords of $\Lambda_\Gamma$ and of $\Lambda_{\Gamma_e}$ (contained within the initially chosen Darboux chart $\R^5 \subset S^5$) and the cobordism is cylindrical above this set. Thus the set of Reeb chords are in $1:1$ correspondence. Each gives rise to two Reeb chords in $L^{Mo}$: one above the positive end and one above the negative end. For the Reeb chord above the negative end the index coincides with the index as in $\Lambda_\Gamma$ while the index for the positive end is raised by $1$. These will be denoted by $\rho_+$ and $\rho_-$, respectively. 

\begin{theorem} \label{CPG: Curve Count}
    The disk surgery cobordism of Construction \ref{CPG: Cobordism} is isotopic to a perfect disk surgery cobordism. 
\end{theorem}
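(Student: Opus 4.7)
The plan is to show that some representative of the isotopy class of $\tilde L$ is perfect, by counting rigid Morse flow trees on its Morse cobordism $L^{Mo}$ via Theorem \ref{thm: ehk flow tree count}. By Construction \ref{CPG: Cobordism} the cobordism $\tilde L$ is cylindrical above $J^1(\bar D_{1-\epsilon})$, and by Construction \ref{CPG: Initial choices} every index-$1$ chord of $\Lambda_\Gamma$ and $\Lambda_{\Gamma_e}$ lies above $D_{1-2\epsilon}\setminus D_\epsilon$, so $\tilde L$ is basic around every such chord. Fixing a pseudo-flat metric $g+dt^2$ on $S^2\times[e^a,e^b]$ whose base part is Euclidean on a disjoint union of small neighborhoods of the index-$1$ chord projections, Lemma \ref{Lem:Trivial tree} produces for each index-$1$ chord $\rho$ a unique transversely cut-out flow tree from $\rho_+$ to $\rho_-$ in the slice $\rho\times[e^a,e^b]$; these are the required trivial strips.

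To exclude any other rigid flow graph between index-$1$ chords at the two ends, consider such a graph $G$ with positive puncture at $\rho_+$ and negative puncture at $\sigma_-$. By Lemma \ref{Lem:Trivial tree}, $G$ must leave the basic neighborhood $B_{\rho_+}$, so Proposition \ref{Prop:Energy bound} forces $e^{b-a}\ell(\rho)\ge\ell(\sigma)+E_\rho$. When $\rho=\sigma$ this contradicts uniqueness in the basic neighborhood, so we may assume $\rho\neq\sigma$. For graphs with $\ell(\rho)\le\ell(\sigma)$ the argument of Theorem \ref{Thm: Basic Cobordism} applies after compressing $\tilde L$: each of the three stages of Construction \ref{CPG: Cobordism} (the Clifford sum at $v$, the index-$1$ surgery along $e'$, and the cancelling isotopy) can be realized with arbitrarily short length in the symplectization direction without altering the base metric or the constants $E_\rho$, so $b-a$ can be made small enough to violate the above energy inequality.

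The main obstacle is to rule out graphs with $\ell(\rho)>\ell(\sigma)$, where the compressed-cobordism bound alone is insufficient. The plan is to exploit Construction \ref{CPG: Initial choices}(3): gradient differences between upper and lower sheet pairs point strictly outward and are transverse to each $\partial D_\delta$ with $\epsilon<\delta<1-\epsilon$. Following the sign convention for flow direction at a positive puncture, this forces any flow line of $G$ lying on such a sheet pair and emanating from $\rho_+$ inside $D_{1-2\epsilon}$ to travel strictly inward in the base, so $G$ cannot escape $\bar D_{1-\epsilon}$ through such a sheet pair and return to an index-$1$ chord of $\Lambda_\Gamma$ at the negative end. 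Combined with Lemma \ref{Lem:Flow graph restrictions} and the explicit surgery geometry of Construction \ref{CPG: Cobordism}, whose non-cylindrical region above $\bar D_1\setminus\bar D_{1-\epsilon}$ contains only index-$0$ Reeb chords and chords of $\T_{Cl}$ (never index-$1$ chords of $\Lambda_\Gamma$), any excursion of $G$ through the annular region must terminate at a chord of $\T_{Cl}$ or at an index-$0$ chord, not at $\sigma_-$. This excludes the remaining graphs and yields perfectness.
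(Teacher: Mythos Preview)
Your proposal has a genuine gap in the second paragraph. You claim that ``each of the three stages of Construction \ref{CPG: Cobordism} \ldots\ can be realized with arbitrarily short length in the symplectization direction,'' and then invoke the energy argument of Theorem \ref{Thm: Basic Cobordism}. But the paper explicitly notes, at the start of Section 7, that unlike Construction \ref{Con:Basic cobordism} this cobordism \emph{cannot} be made arbitrarily short. The reason is that the Legendrian isotopies in Construction \ref{CPG: Cobordism} (merging the tetrahedron into $\Gamma$, simplifying the weave, cancelling the extra Reeb chord pair) are large-scale isotopies whose trace cobordisms have length bounded below, as discussed in the proof of Construction \ref{Con:Basic cobordism}. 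In that earlier construction the shortness came from carefully tuning the Clifford torus so that the required isotopy was infinitesimal; no such tuning is available here. So your energy inequality $e^{b-a}\ell(\rho)\ge \ell(\sigma)+E_\rho$ cannot be violated by shrinking $b-a$, and the case $\ell(\rho)\le\ell(\sigma)$ is not handled.

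Your third paragraph is also too vague to stand on its own. The edge leaving the positive puncture $\rho_+$ is carried by the two \emph{upper} sheets (index-$1$ chords of $\Lambda_\Gamma$ sit between sheets of the same hemisphere), so Construction \ref{CPG: Initial choices}(3) says nothing about its direction; you have not explained why an upper/lower edge must appear, nor why the graph is then forced into $D_\epsilon$. The paper's actual argument is a single mechanism covering all cases at once, with no energy estimate and no case split on chord lengths: start from the \emph{negative} puncture $\rho_{j,-}$, use the cyclic-order rule at vertices to show that any departure from the trivial strip forces an incoming edge on an upper/lower sheet pair, then use the outward-pointing gradient property to show such edges drive the graph radially inward into $D_\epsilon$ as one moves toward the positive end, and finally apply a maximum-principle argument in the $[e^a,e^b]$-coordinate to conclude that the positive puncture must lie above $D_\epsilon$, hence at an index-$2$ (not index-$1$) chord. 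You should replace your two-case argument with this direct geometric one.
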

\begin{proof}
  Let $L^{Mo}$ be the Morse cobordism obtained from the eventually cylindrical cobordism $\tilde{L}$ from the construction. Let $\rho_i$ ($i=1,2$) be two index $1$ Reeb chords of $\Lambda_\Gamma$. Endow $T^*(\R \times [a,b])$ with the Euclidean metric.

    We will show that the space of Morse flow graphs with one positive puncture at $\rho_{1,+}$ and a negative puncture at $\rho_{2,-}$ is empty unless $\rho_{1} = \rho_2$, in which case it contains only the trivial Morse flow tree. In particular, these spaces of Morse flow graphs are transversally cut-out.  Thus, we obtain a perfect disk surgery cobordism via Theorem \ref{thm: ehk flow tree count}.

  By Construction \ref{CPG: Initial choices} $\Lambda_\Gamma$ is basic around all its Reeb chords and by Construction \ref{CPG: Cobordism} the cobordism $\tilde{L}$ is cylindrical above $\bar{D}_{1-\epsilon} \times \R$. So in particular $\tilde{L}$ is basic around all Reeb chords of $\Lambda_\Gamma$. So, we may use Lemma \ref{Lem:Trivial tree} with respect to the euclidean metric on $M \times [e^a,e^b]$ and obtain that the space of Morse flow graphs with unique positive puncture at $\rho_+$ and unique negative puncture at $\rho_-$ contains the trivial Morse flow tree for any Reeb chord $\rho$ of $\Lambda_\Gamma$ and they are transversely cut-out. To obtain the theorem, we only have to rule out rigid Morse flow graphs which have a unique positive puncture at a Reeb chord $\rho_{i,+}$ and a negative puncture at a negative Reeb chord $\rho_{j,-}$ which is not the same as the trivial tree. We will show this by proving that a Morse flow graph having a negative puncture at $\rho_{j,-}$ must have a positive puncture at $\rho'_+$ coming from an index $2$ Reeb chord of $\Lambda_\Gamma$.

   Let $G$ be an interior Morse flow graph with a negative puncture at $\rho_{j,-}$. 
   From Lemma \ref{Lem:Flow graph restrictions} we obtain that the flow graph close to a puncture above $\rho_{j,-}$ must look like a multiple cover of a leaf which is oriented towards the puncture. Either all covers of that edge continue up the trivial flow line ($\subset \rho \times [e^a,e^b]$) to $\rho_{j,+}$ and take it as a positive puncture multiple times or there is an internal vertex $v$ which has an adjacent edge whose $1$-jet lift includes one of the sheets of the lower hemisphere. Recall that by the definition of a Morse flow graph there must be a cyclic order on the edges adjacent to a (non-puncture) vertex $v$. This obeys the rule that an inward pointing edge must be followed (or preceded) by an edge which lifts to the same sheet but is outward pointing. Thus we claim that there must be some edge $e$ pointing to $v$ which has one $1$-jet lift on the upper hemisphere and one lift on the lower hemisphere (See the next paragraph). By Construction \ref{CPG: Initial choices} these gradients are all transverse to $\partial D_{\delta}$ for some $\delta>\epsilon$. Thus as we follow $e$ up its radius will shrink. Each time we meet a vertex of $G$ we find another edge $e'$ which fulfills the same condition. Until we are inside $D_\epsilon$ which by assumption a mixed gradient flow line cannot enter. Now denote by $T$ the maximum value of the projection to $[e^a,e^b]$ any gradient flow line of $\mathcal{G}'$ achieves inside $D_\delta$. This cannot be achieved on the interior of an edge and if it were to be on a non-puncture vertex then we could find another flow line which we can follow upwards and achieve a value $T'>T$ in its interior. This is only possible if $T=e^b$, but by the argument in the proof of Lemma \ref{Lem:Flow graph restrictions},
   this cannot happen for a non-puncture vertex. Thus we have successfully ruled out Morse flow graphs which are not of the desired type.

   It remains to prove the claim that if we have a non-puncture vertex $v$ with an outgoing edge $e$ which is supported by one sheet of the upper hemisphere and one of the lower hemisphere then there must be an ingoing edge $e'$ fulfilling the same restriction (with possibly different upper or lower sheets). Above any point $p$ in $D_{\delta}$ $0<\delta< 1-\epsilon$ we have $4$ sheets which we will denote by $U^1,U^2,L^1,L^2$ where $U^i$ are the two sheets of the upper hemisphere and $L^i$ are the two sheets of the lower hemisphere. Furthermore denote by $U^i_{\pm}$ and $S^i_{\pm}$ the number of incoming directions ($U^i_+/S^i_+$) and the number of outgoing directions ($U^i_-/S^i_-$) at $v$. The restriction that there is a cyclical order on the edges adjacent to any vertex implies that an incoming lift of an edge $e_i$ on some sheet must be followed (or preceded) by an outgoing lift on the same sheet, implies $U^i_+=U^i_-$ and $S^i_+=S^i_-$ for $i=1,2$. Now consider the weighted sum $U_w=U^1_++U^2_+-U^1_--U^2_-$ and similarly $S_w=S^1_++S^2_+-S^1_--S^2_-$ both of these numbers must be $0$. Note however, that edges supported either by both upper or lower sheets doesn't change the sums. However, if we have a mixed outgoing edge this contributes $-1$ to $S_w$ and $1$ to $U_w$. So a mixed incoming edge is necessary.
\end{proof}

\subsection{Filling the Clifford torus}


Consider the Legendrian $\T_{Cl} \subset S^5$ associated
to the tetrahedron graph.  
Each edge of the graph determines a cycle on $\mathbb{T}_{Cl}$; in fact opposite edges give the same cycle.  Let us denote the resulting cycles $\ell, m, n$; one has $\ell + m + n = 0$, and any two give a basis of $H_1(\mathbb{T}_{Cl})$. 

There are three topologically distinct families of 
Lagrangian fillings of $\T_{Cl}$, coming from different smoothings of the 
Harvey-Lawson cone. 
Each of the three fillings contracts exactly one of these cycles.  If a given filling contracts $\ell$,
then in that filling $m = -n$, and one of these is positive with respect to the primitive of the symplectic form.  Moreover, the other two fillings render $\ell$  respectively positive or negative with respect to the primitive of the symplectic form on $\C^3$. 

These fillings may be glued {\em topologically} to the Clifford torus end of a cobordism coming from Construction \ref{CPG: Cobordism}.  We refer to this construction as a {\em topological Harvey-Lawson filling}.

\begin{lemma}  \label{Lem: CPG topological filling}
    Consider 
    the three possible topological Harvey-Lawson fillings of the 
    Clifford torus.  These each contract a different one of the three cycles  $\{\ell, m, n\} \in H_1(\T_{Cl})$; denote the contracted 
    cycle $\gamma$.  

    Let $e_T$ be the edge from Lemma \ref{fate of e}.
    Then the intersection pairing 
    $(\gamma, e_T)$ takes the three values $+1, 0, -1$, one on each filling.  We refer to the corresponding fillings
    as positive, forbidden, and negative, respectively.  

    Moreover, on the positive and negative fillings, the topological result of gluing in the Harvey-Lawson filling 
    is a trivial cylinder; this being false for the forbidden filling.  
\end{lemma}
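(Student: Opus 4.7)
My plan is to compute the pairing values from homology, apply Lemma \ref{Topological filling possible} for the two non-vanishing pairings, and use a separate homological obstruction via Lemma \ref{fate of e} for the forbidden filling. For the pairing values, the edge $e_T$ represents one of the three classes $\{\ell, m, n\}$ in $H_1(\mathbb{T}_{Cl}) \cong \mathbb{Z}^2$; after relabeling, take $e_T = \ell$. Since $\ell + m + n = 0$ and any two of $\{\ell, m, n\}$ form a $\mathbb{Z}$-basis, the intersection pairings yield $(\ell, e_T) = 0$ and $(m, e_T) = -(n, e_T) = \pm 1$. As the three topological Harvey-Lawson fillings contract $\ell, m, n$ respectively, the pairings $(\gamma, e_T)$ realize the three values $0, +1, -1$, proving the first assertion.

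For the positive and negative fillings I would apply Lemma \ref{Topological filling possible}. The key geometric input is that the cusp-edge locus of the tetrahedron-graph front of $\mathbb{T}_{Cl}$ represents the class $e_T$ in $H_1(\mathbb{T}_{Cl})$; I would verify this by reading off the cusp arcs from the Casals-Zaslow front in Figure \ref{Fig:CPG-Cobord-1} and computing their signed union. Granted this, both $m$ and $n$ admit embedded representatives meeting the cusp-edge locus transversely in a single point, since algebraic intersection on an oriented surface realizes geometric intersection when the number is $\pm 1$. Lemma \ref{Topological filling possible} then identifies $\tilde{L} \cup_{\mathbb{T}_{Cl}} \mathbf{T}_\gamma$ topologically with a trivial cylinder for $\gamma \in \{m, n\}$, giving the positive and negative fillings.

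For the forbidden filling (contracting $\ell = e_T$), I would rule out the trivial-cylinder conclusion by Lemma \ref{fate of e}: in $H_1(\tilde{L})$ the class $e_T$ is identified up to sign with $[\partial D] \in H_1(\Lambda_\Gamma)$. The hypothesis $[\partial D] \neq 0$ from Theorem \ref{cluster transformation from perfect cobordism} then implies that gluing $\mathbf{T}_\ell$, which kills $e_T$, would also kill $[\partial D]$ in the combined manifold; but a trivial cylinder over $\Lambda_\Gamma$ preserves $[\partial D]$ as a nonzero class of $H_1$. This incompatibility forces $\tilde{L} \cup_{\mathbb{T}_{Cl}} \mathbf{T}_\ell$ to have strictly smaller $H_1$ than $\Lambda_\Gamma \times \mathbb{R}$, ruling out the forbidden filling as a trivial cylinder. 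The main anticipated obstacle is the homological identification of the cusp-edge locus with $e_T$, which requires a careful reading of the Casals-Zaslow tetrahedron front with orientations tracked through the satellite embedding.
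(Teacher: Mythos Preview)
Your computation of the three pairing values and your homological obstruction for the forbidden filling are both sound; the latter is in fact cleaner than the paper's own argument, which only shows that a particular pair of handles fails to cancel.

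The gap is in the middle step.  Lemma~\ref{Topological filling possible} is stated and proved for the cobordism of Construction~\ref{Con:Basic cobordism}, where the Clifford torus is presented by the spinning front of Figure~\ref{Fig:Surgery-1} and the index-$(n-1)$ ambient surgery is done along the newly created cusp edge.  The CPG cobordism of Construction~\ref{CPG: Cobordism} is a \emph{different} cobordism: the Clifford torus is presented via the tetrahedron graph, and the index-$1$ surgery is a Casals--Zaslow weave move along the edge $e'$ of the connect-summed graph.  So Lemma~\ref{Topological filling possible} does not apply, and your key geometric input (that the cusp edge of the tetrahedron front represents~$e_T$) is aimed at the wrong target even if it were true.

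What the paper actually does is repeat the handle-cancellation argument from the proof of Lemma~\ref{Topological filling possible} for the new cobordism.  The attaching sphere of the $2$-handle is the lift of the surgered edge~$e'$; tracking it through the Clifford-sum stage (cf.\ Figure~\ref{Fig:Homology-map-1}) shows its homology class is $e + e_T$, not the cusp edge.  Since the contractible cycle~$\gamma$ lives on the torus side, $(\gamma, e + e_T) = (\gamma, e_T)$, and the cancellation criterion then reduces to exactly the pairing you computed.  To repair your argument, replace the appeal to Lemma~\ref{Topological filling possible} with this direct identification of the attaching circle in the CPG cobordism.
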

\begin{proof}
    Proven similarly to Lemma \ref{Topological filling possible}. In that proof we require that the index $2$ Morse singularity of the cobordism $L_e$ and the index $1$ Morse singularity in $\mathbf{T}_{Cl}$ are cancellable. This is only possible if (after some choice of metric) their stable and unstable manifolds have one algebraic intersection. As the unstable manifold of the index $2$ manifold is homotopic to the $e+e_T$ cycle and the stable manifold is homotopic to the contractible cycle. This is impossible if $e_T$ is the contractible cycle. If $e_T$ is not the contractible cycle it is some generator of homology and thus has one algebraic intersection with the contractible cycle.
\end{proof}

\subsection{Graph mutation and skein-valued cluster transformations}

\begin{theorem} \label{cubic planar graph cluster}
    Let $\Gamma$ be a cubic planar graph, and $e$ a non-loop edge
    of $\Gamma$.  We write $e_+$ for the corresponding oriented simple closed curve, and $e_-$ for the curve with opposite orientation.  Then we have the following equations, holding, respectively,
    in completions of the skein in the directions of 
    $e_+$ and $e_-$: 
    $$ \mathbf{A}(\Lambda_{\Gamma_e}) \mathbf{E}(e_+) = 
    \mathbf{E}(e_+)  \mathbf{A}(\Lambda_\Gamma) $$
    $$ \mathbf{A}(\Lambda_{\Gamma_e}) \mathbf{E}(e_-) = 
    \mathbf{E}(e_-)  \mathbf{A}(\Lambda_\Gamma) $$
\end{theorem}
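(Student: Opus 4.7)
The plan is to assemble the machinery developed in the preceding sections: produce a perfect disk surgery cobordism associated to the edge flip, equip it with a standard topological trivialization in both possible ways, and then invoke Theorem \ref{cluster transformation from perfect cobordism} directly.

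First, apply Construction \ref{CPG: Cobordism} to the edge $e$. Since $e$ is a non-loop edge of $\Gamma$, we may after an isotopy of $\Gamma$ arrange that $e$ bounds the large face, so the construction applies and yields an exact Lagrangian cobordism $\tilde{L}$ from $\Lambda_\Gamma \sqcup \T_{Cl}$ to $\Lambda_{\Gamma_e}$. By Theorem \ref{CPG: Curve Count}, a small perturbation of $\tilde{L}$ is a perfect disk surgery cobordism $L$ in the sense of Definition \ref{def: perfect}.

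Second, construct the standard topological trivializations. By Lemma \ref{Lem: CPG topological filling}, of the three topological Harvey-Lawson fillings of $\T_{Cl}$, two (the positive and negative) yield, after gluing, a trivial cylinder $\R \times \Lambda_\Gamma = \R \times \Lambda_{\Gamma_e}$ on the level of topology, while the third is forbidden. By Lemma \ref{fate of e}, the edge $e_T$ of the tetrahedron graph at the Clifford end is identified up to sign with the edge $e$ of $\Gamma$ through the cobordism. Consequently, the cycle $\partial D$ of Definition \ref{disk surgery cobordism} is, in the positive and negative trivializations respectively, the two oriented simple closed curves $e_+$ and $e_-$. Choose compatible brane data (vector field, linking 4-chain, spin structure) on $L$ extending over the topological filling; the existence of such a choice follows from the fact that the complement is topologically cylindrical, so obstructions can be avoided.

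Third, apply Theorem \ref{cluster transformation from perfect cobordism} separately to the positive and negative topological trivializations. Completing along the corresponding cone (containing $e_+$ or $e_-$), the theorem yields
\begin{equation*}
\mathbf{A}(\Lambda_{\Gamma_e}) \mathbf{E}(e_\pm) = \gamma_\pm \, \mathbf{E}(e_\pm) \, \mathbf{A}(\Lambda_\Gamma),
\end{equation*}
where $\gamma_\pm$ is a signed monomial in the framing variable $a$, arising as the skein class of the trivial strip $\Psi_{Y\times \R, L}(\rho_+, \rho_-)$ from Lemma \ref{Lem:General-breaking}. It remains to identify $\gamma_\pm$ as $1$; this is a bookkeeping step using the explicit strips supplied by Lemma \ref{Lem:Trivial tree} (homotopically straight cylinders between matching chords), together with a check that the 4-chain $K$ and spin structure conventions fixed in Section \ref{what perfect cobordisms do} assign the trivial decoration to these strips. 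The symmetry between the positive and negative fillings (they differ by a change of orientation of $e$) should force $\gamma_+ = \gamma_-^{-1}$, and the explicit local picture of the trivial strips then gives $\gamma_\pm = 1$.

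The bulk of the geometric content has already been carried out in the previous sections. The main point that remains nontrivial is the careful tracking of the orientation of $\partial D$ and the verification that the two admissible topological Harvey-Lawson fillings produce exactly the two orientations $e_+$ and $e_-$; the framing-monomial calculation is expected to be routine but is the one residual computation required.
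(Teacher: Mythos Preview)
Your proposal is correct and follows essentially the same route as the paper: invoke Theorem~\ref{CPG: Curve Count} for perfectness, Lemma~\ref{Lem: CPG topological filling} (together with Lemma~\ref{fate of e}) for the two standard trivializations inducing the two orientations of $e$, and then apply Theorem~\ref{cluster transformation from perfect cobordism}. The paper's proof is a three-line version of exactly this.

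One point worth noting: the paper's main-text proof simply writes ``the result follows from Theorem~\ref{cluster transformation from perfect cobordism}'' without addressing the signed framing monomial $\gamma$ that appears in that theorem. You are more careful in flagging this as a residual computation. In fact the paper does not set $\gamma=1$ in the body either; the precise determination is deferred to the appendix (Construction~\ref{Con: 4-chain of CPG corbordism} and Proposition~\ref{Fixed framing CPG cluster}), where an explicit 4-chain is built and the scaling of $\mathbf{E}$ is fixed. So your instinct that this step requires a separate local calculation is correct, and your identification of it as ``routine but required'' matches how the paper treats it.
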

\begin{proof}
    In Theorem \ref{CPG: Curve Count}, we have constructed perfect cobordisms 
    for the Legendrian disk surgeries 
    associated to mutations of cubic planar graphs. 
    Lemma 7.7 provides each with two standard trivializations, one inducing each orientation of the loop corresponding to $e$.  Now the result follows from Theorem \ref{cluster transformation from perfect cobordism}.
\end{proof}

\section{The necklace graph}

The ``necklace graph'' $\Lambda_{Neck, g}$ 
(see Fig. \ref{Fig:Necklace}) has the virtue that its associated 
Legendrian has an exact filling which is topologically the genus $g$ handlebody
 \cite[Theorem 4.10.1, Theorem 4.17]{Casals-Zaslow}.  

Several of the main results of 
\cite{SSZ} concern properties of sequences of q-cluster transformations associated to 
sequences of graph mutations beginning with the necklace graph, and having a certain positivity property.  
We will give geometric counterparts of said results here, 
and correspondingly prove that 
certain explicit skein-valued lifts of the formulas written in \cite{SSZ} are in fact skein-valued curve counts.


\begin{figure}
    \begin{picture}(50,125)
    \put(-150,0){\includegraphics[width=15cm, trim=0cm 4cm 0cm 0cm, clip]{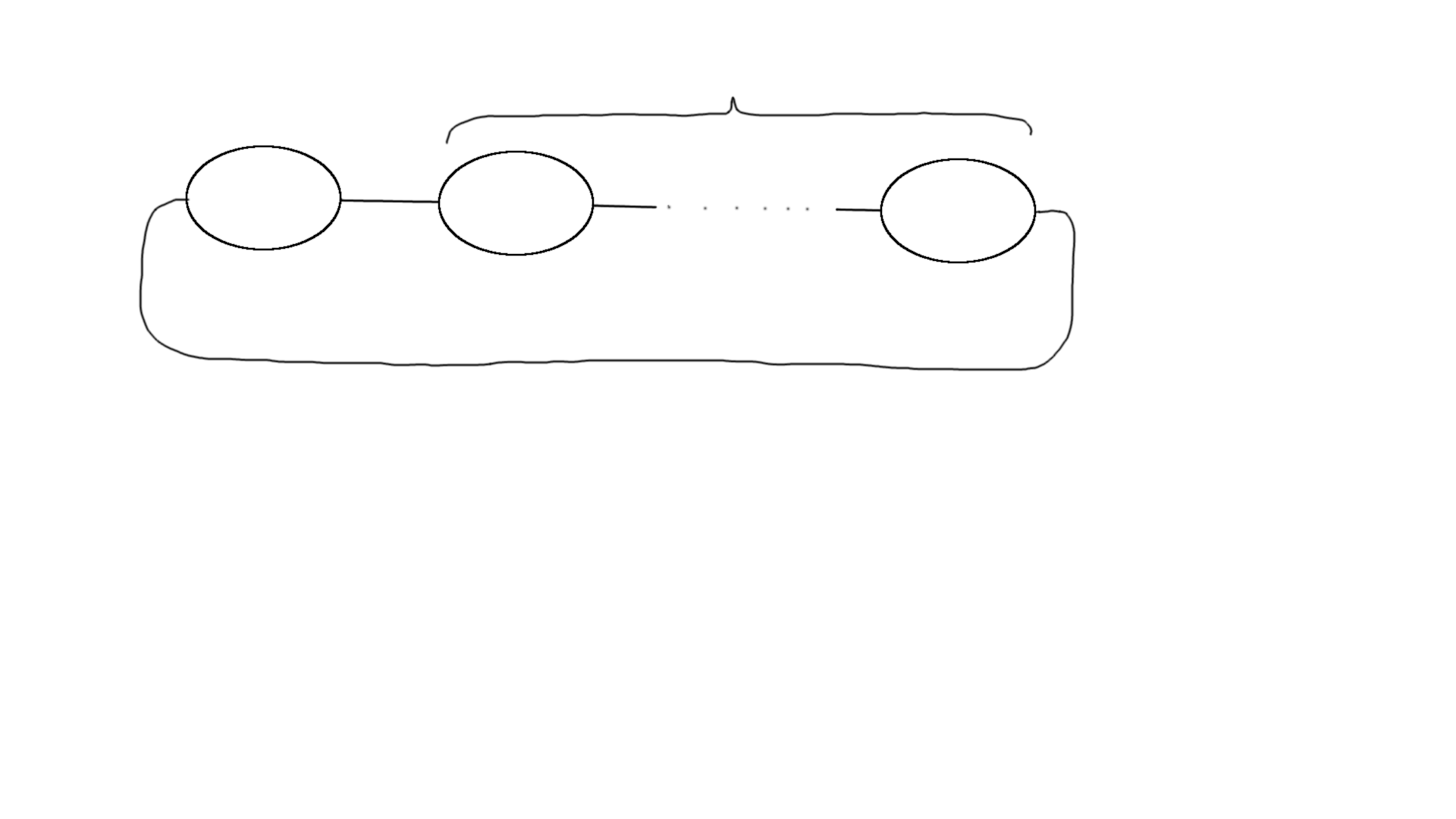}}
    \put(63,112){$g$}
    
    \end{picture}
    \caption{The necklace graph of genus $g$.}   
    \label{Fig:Necklace}  
\end{figure}

For the remainder of the article, we invert the following quantities associated to pairs $(\lambda,\mu)$ of Young diagrams:
\begin{equation} \label{clambdamu} c_{\lambda,\mu} = a^{-1}(q^{-1/2} - q^{1/2})\sum_{\square \in \lambda}q^{-cn(\square)} - a(q^{-1/2} - q^{1/2})\sum_{\square \in \mu} q^{cn(\square)} \quad \quad (\lambda,\mu) \neq (\emptyset,\emptyset)
\end{equation}
The latter are the eigenvalues of the difference of the diagonal operator and the operator given by taking disjoint union with an unknot, see \cite{Gilmer-Zhong-connectsum}, and we will need to divide by them in the proof of Corollary \ref{cor injective}.

\subsection{Lemma on skeins of handlebodies}

Let $L_g$ be a handlebody of genus $g$, with boundary 
$\Sigma_g$.  
Fix a disk $D \subset L_g$.
Denote by $Sk_k(L_g)$ the submodule of $Sk(L_g)$ generated by framed links whose geometric intersection number with $D$ is $\leq k$.  
Evidently $[\bigcirc] - [\partial D]$ preserves $Sk_k(L_g)$.  
We have the following result of Gilmer and Zhong: 

\begin{lemma} \cite[Lemma 2.1]{Gilmer-Zhong-connectsum} \label{Gilmer-Zhong-lemma}
    Any  $L \in Sk_k$ can be expanded in a finite sum $\Psi = \sum_{i=1}^n a_i [L_i]$, where the $[L_i] \in Sk_k$ satisfy 
    $([\bigcirc] - [\partial D])[L_i] = c_i [L_i] \pmod {Sk_{k-1}}$, and
    the $c_i$ are among the $c_{\lambda,\mu}$ of \eqref{clambdamu}. 
\end{lemma}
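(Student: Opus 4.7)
The plan is to reduce to a local statement in a tubular neighborhood of the disk $D$, then apply the classical idempotent decomposition for tangles to diagonalize the operator $[\bigcirc] - [\partial D]$ on the associated graded of the filtration.

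First I would fix a tubular neighborhood $N(D) \cong D^2 \times [-1,1]$ with $D = D^2 \times \{0\}$ and isotope a representative $L$ of a class in $Sk_k(L_g)$ so that $L \cap N(D)$ consists of $j \le k$ vertical strands (with orientations) intersecting $D$ transversely, while all the topological complexity of $L$ outside $N(D)$ is pushed to one component, say the $[-1, 0)$ side. The operator $[\bigcirc] - [\partial D]$ is supported in $N(D)$: $[\bigcirc]$ adds a small unknot, while $[\partial D]$ inserts a meridional circle in the $D^2 \times \{0\}$ slice encircling exactly the $j$ strands of $L$. After this reduction the problem becomes purely local: one must diagonalize the operator ``encircle the strands with $\bigcirc - m$'' acting on the space of $j$-strand tangles in the cylinder $D^2 \times [-1,1]$ modulo HOMFLYPT relations, with prescribed endpoints on $D^2 \times \{-1\}$.

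Second I would invoke the standard idempotent basis for such tangle spaces (Aiston–Morton, or the skein/Hecke dictionary as in the solid torus case). For $j$ strands with given orientations—say $j = p + q$ with $p$ upward and $q$ downward strands after canceling turnbacks—the tangle space decomposes, modulo tangles that factor through fewer strands (i.e.\ modulo $Sk_{k-1}$ after gluing back), as a sum over pairs of Young diagrams $(\lambda, \mu)$ with $|\lambda| = p$, $|\mu| = q$ of subspaces spanned by tangles obtained by inserting the minimal idempotents $e_\lambda \otimes e_\mu$ into the $p$ upward and $q$ downward strands respectively. Choosing the $[L_i]$ to be the elements obtained by inserting such idempotents into $L$ on the $[-1, 0)$ side yields the expansion $L = \sum a_i [L_i]$ demanded by the lemma.

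Third I would compute the eigenvalues. The meridional loop $[\partial D]$ encircling the $j$ strands acts, inside the tangle space, as the ``full-twist'' central element in the appropriate Hecke-algebra module; on the idempotent piece labeled by $(\lambda, \mu)$ this is a scalar expressible via the content statistic on the boxes of $\lambda$ and $\mu$, which is exactly the classical formula giving
\begin{equation*}
[\bigcirc] - [\partial D] \;\equiv\; c_{\lambda, \mu} \pmod{Sk_{k-1}}
\end{equation*}
with $c_{\lambda,\mu}$ as in \eqref{clambdamu}. The sign and the $a^{\pm 1}(q^{-1/2}-q^{1/2})$ coefficients come from the relative orientation (upward vs.\ downward strands) and the HOMFLYPT smoothing relation.

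The main obstacle I anticipate is the bookkeeping in step three: one must verify that the meridian around a mixed collection of upward and downward strands acts with precisely the eigenvalue $c_{\lambda,\mu}$ of \eqref{clambdamu}, and show that lower order terms really lie in $Sk_{k-1}$ after gluing (i.e.\ that the reduction to the cylinder did not secretly trade filtration for something harder to control). Granting the standard Hecke-algebra computation—essentially the spectral decomposition of Jucys–Murphy sums, as used by Gilmer–Zhong in the solid-torus case—the rest is a matter of matching conventions.
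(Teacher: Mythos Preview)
The paper does not give its own proof of this lemma: it is simply quoted from Gilmer--Zhong \cite[Lemma 2.1]{Gilmer-Zhong-connectsum}, and the surrounding corollaries use it as a black box. So there is nothing in the paper to compare your argument against directly.

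That said, your outline is essentially the standard argument and is correct in structure: localize near $D$, decompose the tangle space by Aiston--Morton idempotents indexed by pairs $(\lambda,\mu)$, and compute the eigenvalue of the meridian-encirclement on each summand via contents. One terminological slip: in step three you call the action of $[\partial D]$ the ``full-twist central element''. That is not quite right --- the full twist is a braid group element acting by conjugation, whereas encircling the strands by a meridian is a different (though related) operator, sometimes called the meridian map or the ``annular closure'' action. Both are diagonalized by the same idempotents and both have eigenvalues expressible via contents, but the formulas differ; the $c_{\lambda,\mu}$ of \eqref{clambdamu} are specifically the meridian eigenvalues (unknot minus encirclement), not the full-twist eigenvalues. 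If you chase the computation with this correction in mind, the argument goes through. Your anticipated obstacle --- checking that turnback terms land in $Sk_{k-1}$ --- is genuine but routine: any term in which a strand turns back has a representative meeting $D$ in fewer than $j$ points geometrically.
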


\begin{corollary} \label{cor injective}
    The endomorphism $([\bigcirc] - [\partial D])$ is injective on
    $Sk_k/Sk_{k-1}$. 
\end{corollary}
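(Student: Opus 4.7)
The plan is to apply Lemma \ref{Gilmer-Zhong-lemma} to diagonalize the endomorphism on $Sk_k/Sk_{k-1}$ and then use the invertibility of the nonzero $c_{\lambda,\mu}$. First I would use the lemma to expand any $\Psi \in Sk_k$ as a finite sum $\sum_i a_i L_i$ with $([\bigcirc]-[\partial D]) L_i \equiv c_i L_i \pmod{Sk_{k-1}}$ and each $c_i$ among the $c_{\lambda,\mu}$. Passing to the quotient, the classes $[L_i]$ span $Sk_k/Sk_{k-1}$ and become honest eigenvectors of the induced endomorphism with eigenvalues $c_i$.

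Next I would group the expansion by eigenvalue and use the standard fact that eigenvectors of a linear endomorphism attached to distinct eigenvalues are linearly independent. This shows that $Sk_k/Sk_{k-1}$ decomposes as a direct sum of eigenspaces, one for each eigenvalue of the induced operator that actually appears.

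Then I would observe that any eigenvalue $c_i$ arising from a nonzero class in $Sk_k/Sk_{k-1}$ corresponds to a pair $(\lambda_i,\mu_i)$ with $|\lambda_i|+|\mu_i|=k \geq 1$, and in particular is not the zero eigenvalue $c_{\emptyset,\emptyset}$. By the hypothesis that the $c_{\lambda,\mu}$ with $(\lambda,\mu)\neq(\emptyset,\emptyset)$ have been inverted in our ring of scalars, every such $c_i$ is a unit. Hence on each eigenspace the endomorphism acts as multiplication by a unit, and the direct-sum decomposition ensures injectivity on the whole of $Sk_k/Sk_{k-1}$: a nonzero class $[\Psi] = \sum_\lambda [\Psi_\lambda]$ has some nonzero component $[\Psi_\lambda]$ with invertible $\lambda$, and $([\bigcirc]-[\partial D])[\Psi] = \sum_\lambda \lambda [\Psi_\lambda]$ has that component scaled by the unit $\lambda$, hence remains nonzero.

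The main obstacle will be the bookkeeping around Lemma \ref{Gilmer-Zhong-lemma}: one must confirm that the eigenvalues attached to nonzero classes in $Sk_k/Sk_{k-1}$ are precisely those with $|\lambda|+|\mu|=k$ (excluding $c_{\emptyset,\emptyset}=0$), and that the collection of eigenspace components of the induced endomorphism genuinely assembles into a direct sum. Once these essentially linear-algebraic points are in place, invertibility of the relevant $c_i$ immediately gives the claim.
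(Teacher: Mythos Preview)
Your strategy is natural, and you have correctly flagged the key issue: whether the eigenspace components assemble into a direct sum. But this is not ``essentially linear-algebraic'' bookkeeping. The standard fact that eigenvectors with distinct eigenvalues are linearly independent is a statement about vector spaces; here $Sk_k/Sk_{k-1}$ is a module over a ring (the localization of $\Z[a^\pm, q^{\pm 1/2}]$ at the $c_{\lambda,\mu}$), and nothing in Lemma~\ref{Gilmer-Zhong-lemma} tells you it is torsion-free. The Vandermonde-style argument for independence only yields $\prod_{j\ne i}(c_i - c_j)\cdot v_i = 0$ from a relation $\sum_i v_i = 0$, and the differences $c_i - c_j$ are not among the elements that have been inverted. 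So to push your approach through you would need to establish torsion-freeness of $Sk_k/Sk_{k-1}$, or go back into \cite{Gilmer-Zhong-connectsum} and extract a genuine eigenbasis --- real work beyond what the lemma as stated provides.

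The paper sidesteps this entirely. It fixes $L \in Sk_k \setminus Sk_{k-1}$, lets $M \subset Sk_k/Sk_{k-1}$ be the \emph{finitely generated} submodule spanned by the $[L_i]$ appearing in the expansion of $L$, and observes that $[\bigcirc]-[\partial D]$ restricts to a \emph{surjective} endomorphism of $M$: a preimage of $\sum b_i[L_i]$ is $\sum c_i^{-1}b_i[L_i]$, using only that each $c_i$ is a unit. Then one invokes the theorem of Vasconcelos that any surjective endomorphism of a finitely generated module over a commutative ring is an isomorphism. Since $[L]\in M$ is nonzero, it cannot lie in the kernel. No direct-sum decomposition, no torsion-freeness, and no verification that $|\lambda|+|\mu|=k$ is required.
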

\begin{proof}
    Fix $L \in Sk_k \setminus Sk_{k-1}$; expand $L$ as per Lemma \ref{Gilmer-Zhong-lemma}.  
        Let $M \subset Sk_k / Sk_{k-1}$ be the submodule generated by the $[L_i]$
    over the coefficient ring.     Then $([\bigcirc] - [\partial D])$ is a surjective endomorphism of $M$: indeed, for an element $\sum a_i[L_i]$, a preimage is $\sum \frac{a_i}{c_i}[L_i]$.  (Recall we have inverted the $c_{\lambda, \mu}$.)  We recall that 
    any surjective endomorphism of a finitely generated module is an isomorphism \cite{Vasconcelos}.  It follows that $L$ is not in the kernel
    of $([\bigcirc] - [\partial D])$. 
\end{proof}

\begin{corollary}
    \label{skein meridians cut}
    Fix a disk $D \subset L_g$.  If $L \in Sk(L_g)$ satisfies
    $([\bigcirc] - [\partial D]) L = 0$, then $L$ is the image of
    an element of $Sk(L_g \setminus D)$.
\end{corollary}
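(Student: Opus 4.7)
The plan is to proceed by induction on the minimal integer $k$ such that $L \in Sk_k(L_g)$. For the base case $k = 0$, the element $L$ is by definition a linear combination of framed links disjoint from $D$, and therefore already lies in the image of the natural map $Sk(L_g \setminus D) \to Sk(L_g)$, so there is nothing to do. (Unpacking the definition, $Sk_0(L_g)$ is generated by classes of links with zero geometric intersection with $D$, i.e.\ by links sitting inside $L_g \setminus D$.)

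For the inductive step, the key preliminary point is that the operator $[\bigcirc] - [\partial D]$ preserves the filtration $Sk_\bullet(L_g)$. Indeed, $\bigcirc$ is a small disjoint unknot away from everything, and $\partial D$ can be isotoped into a collar of $\partial L_g$ so as to be disjoint from $D$; so multiplying a representative link $K$ by either of these does not increase the geometric intersection number with $D$. Consequently $[\bigcirc] - [\partial D]$ descends to an endomorphism of each subquotient $Sk_k(L_g)/Sk_{k-1}(L_g)$, and by Corollary \ref{cor injective} this descended endomorphism is injective.

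Now suppose $k \geq 1$ is the minimal level with $L \in Sk_k(L_g)$, so the class $\bar L \in Sk_k(L_g)/Sk_{k-1}(L_g)$ is nonzero. The hypothesis $([\bigcirc] - [\partial D]) L = 0$ descends to $([\bigcirc] - [\partial D]) \bar L = 0$ in the quotient, and the injectivity statement forces $\bar L = 0$, i.e.\ $L \in Sk_{k-1}(L_g)$, contradicting minimality of $k$. Hence the induction terminates at $k = 0$, and $L$ is the image of an element of $Sk(L_g \setminus D)$ as required. The main subtlety is precisely the filtration-preservation statement in the middle paragraph; once that is in hand the argument is a one-line consequence of Corollary \ref{cor injective}, so essentially all of the real work has already been carried out in Lemma \ref{Gilmer-Zhong-lemma} and Corollary \ref{cor injective}.
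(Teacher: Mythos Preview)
Your proof is correct and follows essentially the same approach as the paper's: take $k$ minimal with $L \in Sk_k$, observe that Corollary \ref{cor injective} forces $L \in Sk_{k-1}$, contradiction unless $k=0$. The paper's version is more terse (a single sentence), but the logic is identical; your added remarks on why $Sk_0(L_g)$ coincides with the image of $Sk(L_g\setminus D)$ and why the operator preserves the filtration are helpful clarifications that the paper either states elsewhere or leaves implicit.
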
 
\begin{proof}
    Take $k$ minimal so that $L \in Sk_k$.  
     $([\bigcirc] - [\partial D]) \Psi = 0$ implies $\Psi=0 \in Sk_k/Sk_{k-1}$ or in other words $\Psi \in Sk_{k-1}$, which is a contradiction (unless $k=0$). 
\end{proof}
\begin{remark}
    The above corollaries are essentially due to Gilmer and Zhong, who 
    however only state them in the case that the disk is separating 
    \cite[Lemma 2.2]{Gilmer-Zhong-connectsum}. The fact that
    their arguments work in general was earlier observed in \cite[Lemma 7.2]{HSZ}.
\end{remark}

Fix $\lambda \in H^1(L_g)$.  We write $Sk^{\ge 0}(L_g)$
for the elements on which $\lambda \ge 0$, and $Sk^{>0}(L_g)$ for the strictly positive elements.  Recall that, a priori, curve counts land in a completion 
$\widehat{Sk}^{\geq 0}_\lambda (L_g)$.  This completion has, by definition, 
the property that the contribution of curves of bounded $\int \lambda$ to the coefficient of some fixed $q^a$ can be written using only finitely many knots.  (The reason curve counts land here is Gromov compactness.)  

We do not know how to generalize the above corollaries to this completion.  
However, we can generalize them to the smaller subset 
$\widetilde{Sk}^{\geq 0}_\lambda(L_g)\subset \widehat{Sk}^{\geq 0}_\lambda (L_g)$ with the property that the contribution of any fixed $H_1(L_g)$ degree can be written using only
finitely many knots:

\begin{corollary} \label{skein meridians cut completion}
 If $\Psi \in 1 + \widetilde{Sk}^{> 0}_\lambda(L_g)$ and $([\bigcirc]-\partial D) \Psi = 0$ then $\Psi \in 1 + \widetilde{Sk}^{> 0}_\lambda(L_g \setminus D)$.
\end{corollary}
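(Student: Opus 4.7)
The plan is to decompose $\Psi$ by the $H_1(L_g;\Z)$-grading and apply Corollary \ref{skein meridians cut} to each homogeneous piece. Both $\bigcirc$ and $\partial D$ are null-homologous in $L_g$ (the latter because it bounds $D$), so the operator $[\bigcirc] - [\partial D]$ preserves the $H_1$-grading on $Sk(L_g)$, and hence also on the completion $\widetilde{Sk}_\lambda(L_g)$.

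Using the defining property of $\widetilde{Sk}$, I would write
\begin{equation*}
\Psi \;=\; 1 \;+\; \sum_{\alpha \,:\, \lambda(\alpha) > 0} \Psi_\alpha,
\end{equation*}
where each $\Psi_\alpha \in Sk(L_g)$ is a \emph{finite} sum of framed links of homology class $\alpha$. The strict inequality $\lambda(\alpha) > 0$ forces $\alpha \neq 0$, so the class-zero part of $\Psi$ is exactly the empty link $1$. The class-zero component of the equation $([\bigcirc] - [\partial D])\Psi = 0$ is then automatically satisfied, because the operator vanishes on the empty link: both $\bigcirc$ and $\partial D$ are unknotted (the latter via the disk $D$) in $L_g$, so their skein classes agree. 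For each $\alpha$ with $\lambda(\alpha) > 0$, the class-$\alpha$ component of the equation reads $([\bigcirc] - [\partial D]) \Psi_\alpha = 0$ in the uncompleted $Sk(L_g)$, and Corollary \ref{skein meridians cut} produces a lift $\widetilde{\Psi}_\alpha$ of $\Psi_\alpha$ to $Sk(L_g \setminus D)$.

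To finish, I would observe that $\lambda$ on $L_g \setminus D$ is pulled back from $L_g$, so the lift $\widetilde{\Psi}_\alpha$ may be chosen supported in classes of $H_1(L_g \setminus D)$ mapping to $\alpha$ under the inclusion-induced map (any contributions in other classes are linearly independent of $\Psi_\alpha$ and so must already vanish in $Sk(L_g)$). All such classes have $\lambda$-value equal to $\lambda(\alpha) > 0$. Reassembling $1 + \sum_\alpha \widetilde{\Psi}_\alpha$ then yields the desired element of $1 + \widetilde{Sk}^{>0}_\lambda(L_g \setminus D)$.

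The only point requiring care is the compatibility of the $H_1$-decomposition with the completion $\widetilde{Sk}$, but this is built into the definition — each homology-graded piece of an element of $\widetilde{Sk}$ is by assumption a finite sum, so Corollary \ref{skein meridians cut} applies piece by piece. I expect no serious obstacle beyond this bookkeeping; the whole argument is essentially a reduction of the completed statement to the uncompleted one via the $H_1$-grading.
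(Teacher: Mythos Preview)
Your proposal is correct and takes essentially the same approach as the paper: decompose $\Psi$ into its $H_1(L_g)$-homogeneous pieces and apply Corollary \ref{skein meridians cut} to each. The paper's proof is a one-liner to this effect; you have filled in the bookkeeping (why the operator preserves the grading, why each piece is a finite sum, why the lifts reassemble in the correct completion), and your extra discussion of the class-zero piece is harmless but unnecessary, since $1$ trivially lies in the image of $Sk(L_g\setminus D)$.
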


\begin{proof}
    Decompose $\Psi = \sum_{c \in H_1} \Psi_c$ into homology classes and apply Lemma \ref{skein meridians cut} to each.
\end{proof}

We do not know a priori that curve counts live in
$\widetilde{Sk}^{\geq 0}_\lambda(L_g)$: it could be that curves of increasing genera have boundaries in the same $H^1(L_g)$ class, but tracing out ever more complicated knots. 
The counts of interest here do however have this property: 

\begin{lemma} \label{Lift lives in subcompletion}
    Any finite product of elements $ \mathbf{E}(e_i)$, where $\lambda(e_i)>0$, is in $1+ \widetilde{Sk}^{> 0}_\lambda(L_g)$. 
\end{lemma}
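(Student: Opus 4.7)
The plan is to first establish the claim for a single factor $\mathbf{E}(e_i)$, and then show that $1 + \widetilde{Sk}^{>0}_\lambda(L_g)$ is closed under multiplication under the given $\lambda$-positivity hypothesis.

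For a single factor, recall (as cited in the discussion of Lemma \ref{unique solution for clifford}) that there is an Aiston--Morton basis of $Sk^+(\mathbf{T})$ indexed by Young diagrams, and that the basis element associated to $\mu$ has homology class $|\mu|$ times the longitude. Hence there are only finitely many basis elements in any fixed homology degree, and the projection of $\mathbf{E}$ onto degree $n$ is a finite sum of such basis elements. Satelliting along $e_i$, this projection becomes a finite sum of links in $L_g$ lying in the homology class $n \cdot [e_i] \in H_1(L_g)$. Since $\lambda(e_i) > 0$, it follows that $\mathbf{E}(e_i) \in 1 + \widetilde{Sk}^{>0}_\lambda(L_g)$.

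For a product, write $\mathbf{E}(e_j) = 1 + \sum_{n \geq 1} \Phi^{(j)}_n$, where $\Phi^{(j)}_n$ is a finite $\Z[a^{\pm 1}, q^{\pm 1/2}]$-combination of links of homology class $n \cdot [e_j]$. Multiplying out,
$$\mathbf{E}(e_1) \cdots \mathbf{E}(e_k) = \sum_{\mathbf{n} \in \Z_{\geq 0}^k} \Phi^{(1)}_{n_1} \cdots \Phi^{(k)}_{n_k},$$
and each summand lies in the class $c_{\mathbf{n}} := \sum_j n_j [e_j] \in H_1(L_g)$. Fix any target class $c$; the tuples contributing to it satisfy $\sum_j n_j \lambda(e_j) = \lambda(c)$, and since every $\lambda(e_j) > 0$ and every $n_j \geq 0$, each $n_j$ is bounded by $\lambda(c)/\lambda(e_j)$. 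Hence only finitely many tuples $\mathbf{n}$ contribute, and for each one the product $\Phi^{(1)}_{n_1} \cdots \Phi^{(k)}_{n_k}$ is a finite sum of links. Thus the total contribution to class $c$ is a finite combination of links, establishing membership in $1 + \widetilde{Sk}^{>0}_\lambda(L_g)$.

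The main substantive input is the first step: that in the Aiston--Morton description of $\mathbf{E}$, each individual homology degree involves only finitely many basis elements (so $\mathbf{E}$ itself already lies in the finer completion $\widetilde{Sk}$ of the solid torus). Once that is in hand, the closure under finite products is an elementary consequence of $\lambda$-positivity, which prevents arbitrarily long words from decomposing any fixed class.
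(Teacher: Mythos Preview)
Your proof is correct and follows essentially the same approach as the paper: the single-factor case via the graded basis of $Sk^+(\mathbf{T})$, and the product case via the observation that $\lambda$-positivity of the $e_i$ bounds the tuples $(n_1,\dots,n_k)$ contributing to a fixed homology class. The paper phrases the second step by restricting to a strictly convex cone in $H_1$ containing all the $[e_i]$ (after noting that $\widetilde{Sk}^{\ge 0}_\lambda$ is \emph{not} closed under products in general), which amounts to exactly the bound you derive explicitly.
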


\begin{proof}
    Let us check first that this holds for a single $\mathbf{E}(e)$.  By \cite[Theorem 1.1]{unknot}, $\mathbf{E}(e)$ can be written in the following form $\sum_\lambda c_\lambda W_\lambda $ where $c_\lambda$ is some element of $\widetilde{R}$ and $W_\lambda$ is a link whose homology class is given by $|\lambda|[e]$. Thus, only finitely many elements of the above sum belong to any given homology class. 
    
    It is not quite immediate to pass to products, because $\widetilde{Sk}^{\ge 0}_\lambda(L_g)$ is not closed under products.  However, the further subset
    where $H_1(L_g)$ classes are constrained to any strictly convex cone inside $\lambda \ge 0$ is closed under products.  For any finite product of the $\mathbf{E}(e_i)$, we may find such a convex cone containing all $e_i$ and compute there. 
\end{proof}

\begin{corollary}
    If $\Psi = \mathbf{E}(e_1) \ldots \mathbf{E}(e_n)$ for $\lambda(e_i) > 0$, and $([\bigcirc]-\partial D)\Psi=0$ for some $D$, then $\Psi \in 1+ \widetilde{Sk}^{> 0}_\lambda(L_g \setminus D)$.
\end{corollary}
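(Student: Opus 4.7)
The plan is to simply combine the two results immediately preceding the corollary. By Lemma \ref{Lift lives in subcompletion}, any finite product $\mathbf{E}(e_1) \cdots \mathbf{E}(e_n)$ with $\lambda(e_i) > 0$ already lies in $1 + \widetilde{Sk}^{>0}_\lambda(L_g)$, since each $\mathbf{E}(e_i)$ has that property and the relevant convex-cone subset is closed under multiplication. In particular, the hypothesis $\Psi = \mathbf{E}(e_1) \cdots \mathbf{E}(e_n)$ places $\Psi$ in the subcompletion where Corollary \ref{skein meridians cut completion} applies.

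Having secured $\Psi \in 1 + \widetilde{Sk}^{>0}_\lambda(L_g)$, the second hypothesis $([\bigcirc] - \partial D)\Psi = 0$ is exactly the input needed for Corollary \ref{skein meridians cut completion}, which yields $\Psi \in 1 + \widetilde{Sk}^{>0}_\lambda(L_g \setminus D)$.

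There is essentially no obstacle: the work has been done in assembling the Gilmer--Zhong-type injectivity statement (Corollaries \ref{cor injective}, \ref{skein meridians cut}, \ref{skein meridians cut completion}) and in verifying the finiteness property of $\mathbf{E}$-products (Lemma \ref{Lift lives in subcompletion}). The only mild point to keep in mind, already addressed in the proof of Lemma \ref{Lift lives in subcompletion}, is that $\widetilde{Sk}^{\ge 0}_\lambda(L_g)$ is not itself closed under products; one must restrict to a strictly convex cone in $\{\lambda \ge 0\}$ containing all $e_i$. Since there are only finitely many $e_i$, such a cone exists, and the finite-support-per-homology-class property is preserved within it, so the product lies where required. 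From there the conclusion is one citation of Corollary \ref{skein meridians cut completion}.
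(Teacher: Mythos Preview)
Your proposal is correct and matches the paper's approach: the corollary is stated without proof precisely because it is the immediate combination of Lemma \ref{Lift lives in subcompletion} and Corollary \ref{skein meridians cut completion}, exactly as you outline.
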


In the other direction, we have the following much simpler result:  

\begin{lemma} \label{annihilators must bound}
    Let $c$ be a simple closed curve on $\Sigma_g$.  Let $\Psi \in 1+ \widehat{Sk}_\lambda^{>0}(L_g)$.  
    If $(\bigcirc - c) \Psi = 0$, then $c$ must bound in $L_g$. 
\end{lemma}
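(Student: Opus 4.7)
The plan is to prove the contrapositive: suppose $[c] \neq 0$ in $H_1(L_g,\mathbb{Z})$, and derive a contradiction from $(\bigcirc - c)\Psi = 0$.  Since $\bigcirc = (a-a^{-1})/z$ is a scalar in the skein, this equation rearranges to $c \cdot \Psi = \tfrac{a-a^{-1}}{z}\,\Psi$, and my strategy is to match the two sides $\lambda$-degree by $\lambda$-degree.  The key preparatory input is that $Sk(L_g)$ is naturally $H_1(L_g,\mathbb{Z})$-graded, because the HOMFLYPT skein relations are local and preserve total homology class; therefore a knot $c \subset L_g$ with $[c] \neq 0$ represents a nonzero element of $Sk(L_g)$ lying in a different graded component from the empty link.  (If needed, this can be made rigorous by passing to the linking skein $Lk(L_g)$, where different homology classes remain linearly independent.)  The assumption $\Psi \in 1 + \widehat{Sk}_\lambda^{>0}(L_g)$ says precisely that the $\lambda = 0$ component of $\Psi$ is the empty link.

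Now the lowest-$\lambda$ comparison.  Writing $\Psi = 1 + \Psi'$ with $\Psi' \in \widehat{Sk}_\lambda^{>0}(L_g)$, the right-hand side $\tfrac{a-a^{-1}}{z}(1+\Psi')$ has its $\lambda = 0$ component equal to the nonzero scalar $\tfrac{a-a^{-1}}{z}$, while the lowest nonzero $\lambda$-component of the left-hand side $c + c\Psi'$ is $c$ itself, sitting in $\lambda$-degree $\lambda([c])$.  I would then split on cases.  If $\lambda([c]) > 0$, the $\lambda = 0$ part of $c\,\Psi$ vanishes while that of $\bigcirc\,\Psi$ is nonzero.  If $\lambda([c]) = 0$, matching $\lambda = 0$ components forces $c = \tfrac{a-a^{-1}}{z}$ in $Sk(L_g)$, impossible because the two sides lie in different $H_1$-graded components.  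Finally, if $\lambda([c]) < 0$, the $\lambda = \lambda([c])$ component of $\bigcirc\,\Psi$ is zero whereas that of $c\,\Psi$ equals $c \neq 0$.  Each case yields the required contradiction.

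The main obstacle in writing this up cleanly is the preliminary nonvanishing assertion---that a homologically nontrivial knot in a handlebody represents a nonzero skein class; everything else is purely formal bookkeeping with the $\lambda$- and $H_1$-filtrations on $\widehat{Sk}_\lambda^{\ge 0}(L_g)$.
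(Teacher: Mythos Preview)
Your proof is correct and follows essentially the same argument as the paper's own proof: a case analysis on the sign of $\lambda([c])$, followed by inspecting the leading $\lambda$-term of $(\bigcirc - c)\Psi$, with the $\lambda([c])=0$ case resolved via the $H_1$-grading on the skein. The ``main obstacle'' you flag---that a homologically nontrivial knot is nonzero in $Sk(L_g)$---is indeed just the $H_1$-grading, which the paper uses implicitly; your remark about passing to the linking skein is a correct and clean way to see it.
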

\begin{proof}
    If $\lambda(c) < 0$, then the leading term of $(\bigcirc - c) \Psi$ is $c \ne 0$.  If $\lambda(c) > 0$ then the leading term is $\bigcirc \ne 0$.  Thus 
    $\lambda(c) =0$, and the leading term is $\bigcirc - c$.  As this must vanish  in $Sk(L_g)$, we conclude $c$ bounds.  
\end{proof}

\subsection{Curve counts and the necklace graph Legendrian}

\begin{figure}
    \begin{picture}(20,100)
    \put(-150,-70){\includegraphics[width=14cm, trim=0cm 3cm 4cm 0.5cm, clip]{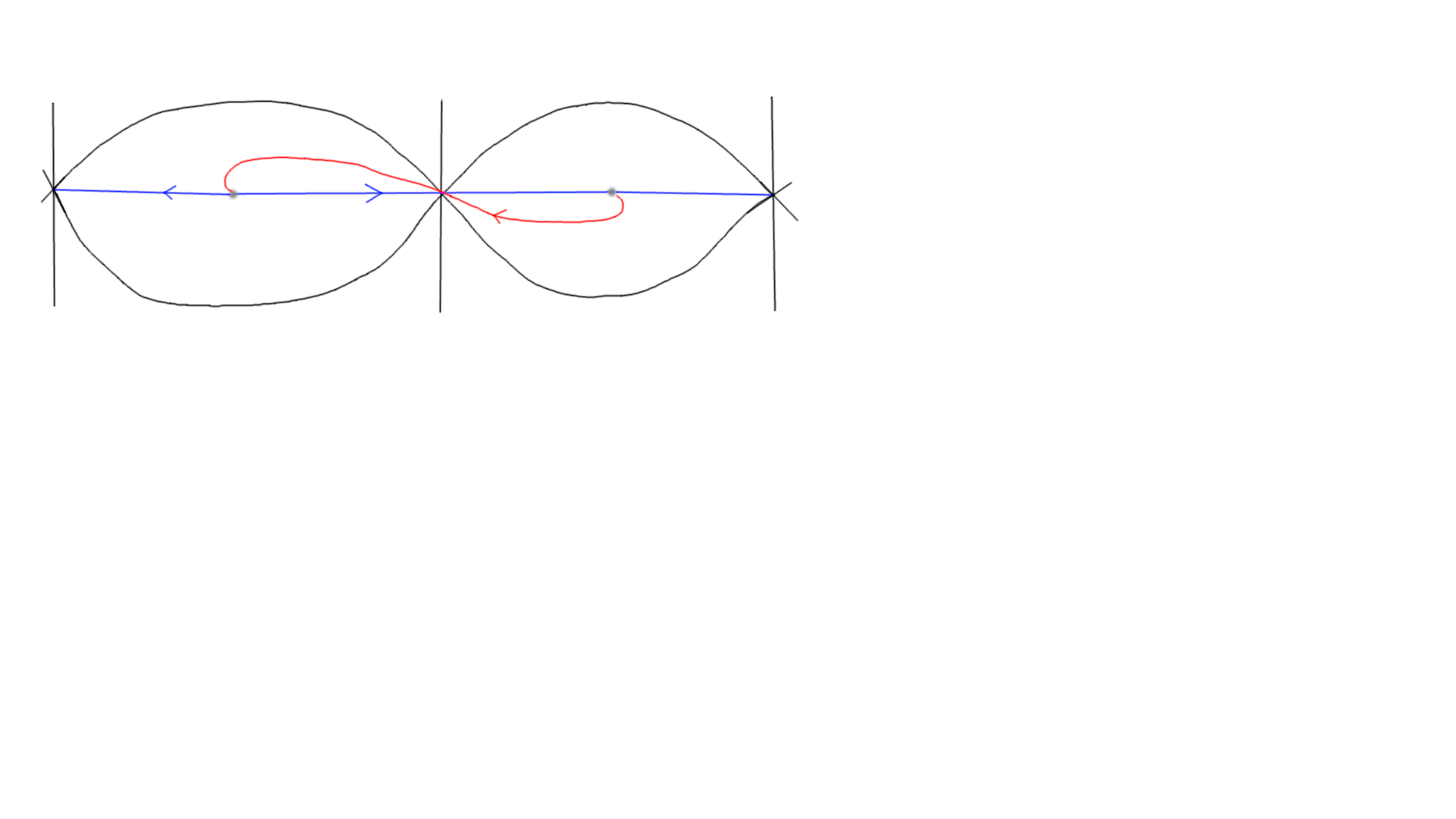}}

    \end{picture}
    \caption{We describe the element $\mathbf{A}(f)$,
    when $f$ is a bigon face of a graph $\Gamma$,
    by specializing \cite[Figure 7]{Scharitzer-Shende}. 
    What is drawn is the region of $\Lambda_\Gamma$ above a neighborhood of $f$; the outer two $6$-valent vertices are identified with one another. In black are the lifts of the edges of the graph $\Gamma$, in blue the tangles of $\mathbf{A}(f)$,   in red a choice of capping path we used in \cite[Figure 7]{Scharitzer-Shende}.}  
    \label{Fig:Pearl}
\end{figure}

We recall a very special case of the main theorem
of our previous article \cite{Scharitzer-Shende}. For the Reeb chord $\rho$ associated to a bigon of $\Lambda_{Neck, g}$, 
and $E_i$ either edge of the face, one has
\begin{equation} \label{necklace A formula}
    \mathbf{A}(\rho) = a^{-1} \bigcirc+[E_i].
\end{equation}
See Figure \ref{Fig:Pearl}.

\begin{remark} \label{Rem: -a^{-1} contribution unknot}
    The explicit formula 
    for $\mathbf{A}(\rho)$
    may be confusing: on the 
    one hand, 
    the unknot and $[E_i]$
    become isotopic in the filling, and on the other, 
    we have claimed the image
    of $\mathbf{A}(\rho)$ is zero.  The point is that we are working in a skein with sign/linking lines.  What necessarily happens in the filling is that  $[E_i]$ must cross some such lines  as it is contracted, introducing an appropriate power of $-a$, in this case $-a^{-1}$. 
\end{remark}

\begin{figure}
    \begin{picture}(20,100)
    \put(-175,0){\includegraphics[width=12cm, trim=0cm 3cm 0cm 1cm, clip]{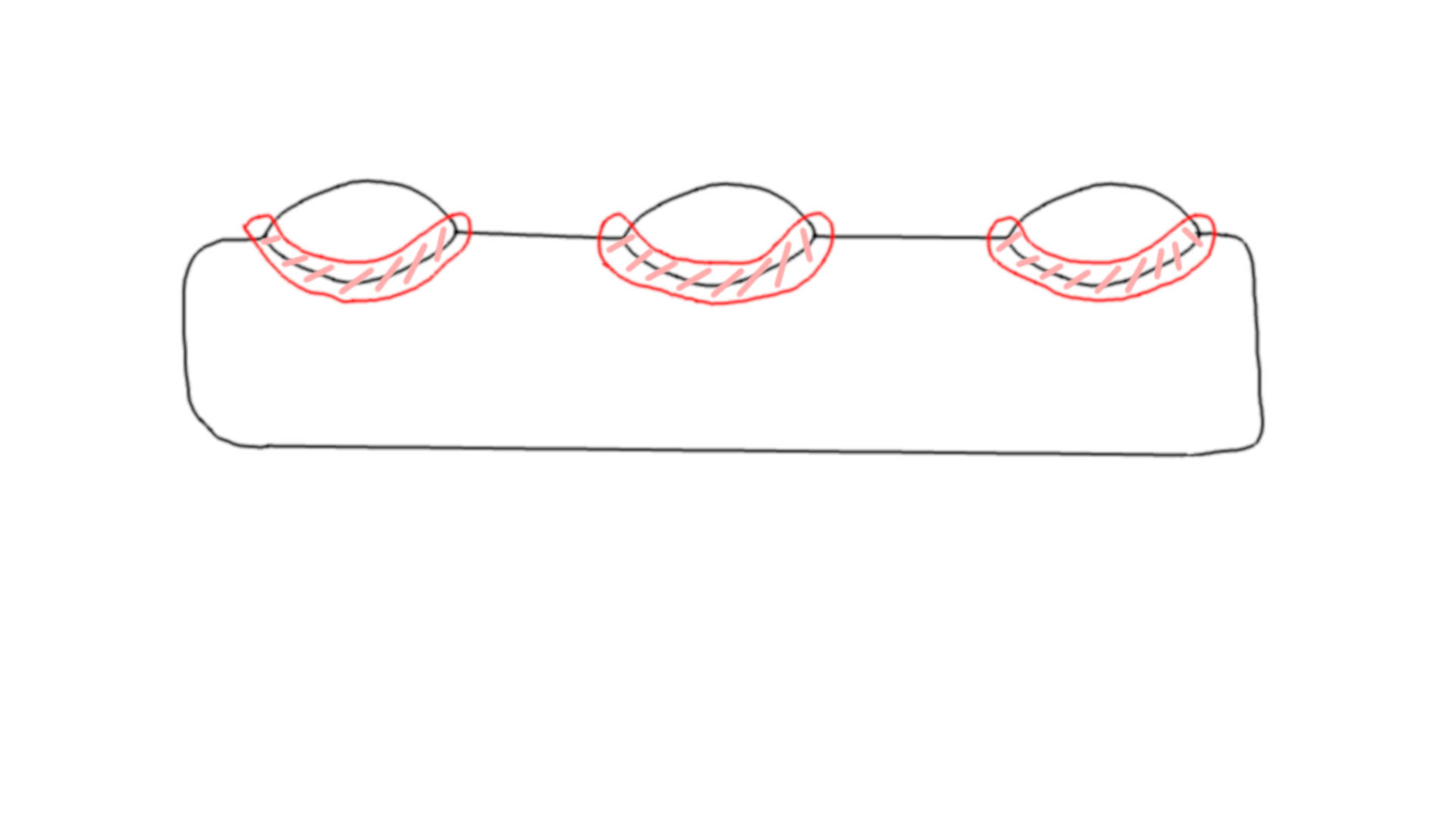}}

    \end{picture}
    \caption{The necklace graph $\Lambda_{Neck, 2}$, and a collection of disks whose lifts to $\Lambda_{\Gamma_{Neck, 2}}$ give disconnecting cylinders. 
    }  
    \label{Fig:Pearl-Removal}
\end{figure}

\begin{lemma} \label{necklace separate}
    Choose one edge of the bigons of the necklace graph.  The corresponding cycles
    separate $\Lambda_{\Gamma_{Neck, g}}$ into two punctured disks.
\end{lemma}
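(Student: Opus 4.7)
The plan is to identify each lifted cycle $\tilde e_i$ as the core of an embedded annulus in $\Lambda_{\Gamma_{Neck,g}}$, obtained as the preimage of a disk in $S^2$ enclosing the corresponding bigon, and then to identify the complement of those annuli as the preimage of a $(g+1)$-holed sphere under an unbranched double cover which turns out to be trivial. First, I would enumerate the bigons of the necklace graph as $B_0, \ldots, B_g$, and choose for each $i$ a closed disk $D_i \subset S^2$ containing $\overline{B_i}$ in its interior, such that the $D_i$'s are pairwise disjoint and each $\partial D_i$ is disjoint from the graph; these are precisely the disks of Figure \ref{Fig:Pearl-Removal}. Let $p : \Lambda_{\Gamma_{Neck,g}} \to S^2$ denote the double branched cover. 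By Riemann--Hurwitz $\chi(p^{-1}(D_i)) = 2\chi(D_i) - 2 = 0$, the $-2$ accounting for the two vertex-branch points inside $D_i$; orientability of $\Lambda_{\Gamma_{Neck,g}}$ together with the fact that $p^{-1}(\partial D_i)$ is an unbranched double cover of $S^1$ with two components then force $p^{-1}(D_i)$ to be an annulus. The chosen edge $e_i$ is an arc in $D_i$ joining the two branch points, so its lift $\tilde e_i$ is a single embedded circle sitting essentially inside $p^{-1}(D_i)$, hence as a core.

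Next I would analyze the complement $S = S^2 \setminus \bigsqcup_i \inr(D_i)$, which is a $(g+1)$-holed sphere containing no branch points of $p$. The restriction $p^{-1}(S) \to S$ is an unbranched double cover, classified by a class $\alpha|_S \in H^1(S;\mathbb{Z}/2)$. The key observation is that this class is trivial: it evaluates on each $[\partial D_i]$ as the number of branch points enclosed by $D_i$ reduced mod $2$, and $D_i$ encloses exactly two vertices. Since $H_1(S;\mathbb{Z}/2)$ is generated by the classes $[\partial D_i]$, we conclude $\alpha|_S = 0$, so the cover is trivial and $p^{-1}(S)$ consists of two disjoint copies of the $(g+1)$-holed sphere.

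To conclude, I would observe that each annulus $p^{-1}(D_i)$ deformation retracts onto its core $\tilde e_i$; therefore cutting $\Lambda_{\Gamma_{Neck,g}}$ along the $\tilde e_i$'s produces the same pair of connected components as removing the interiors of the annuli, each of which is isotopic (as a compact surface with boundary) to the $(g+1)$-holed sphere $S$, i.e., to a disk with $g$ punctures. These are the two punctured disks of the statement. The only subtle point in this strategy is the triviality of the classifying class on $S$, which relies precisely on each $D_i$ enclosing an even number of branch points---an arithmetic feature of bigons that would fail for odd-gons.
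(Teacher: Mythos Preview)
Your proof is correct and follows essentially the same approach as the paper's: both take neighborhoods in $S^2$ whose preimages are the annuli around the chosen cycles, observe that the restriction of $p$ to the complement is an unbranched double cover, and argue that this cover is trivial. The paper's argument is terser---it simply asserts triviality ``by inspection'' from Figure~\ref{Fig:Pearl-Removal}---whereas you make the reason explicit via the $\mathbb{Z}/2$-classifying class and the observation that each $D_i$ encloses exactly two branch points. This is a genuine improvement in rigor over the paper's appeal to the figure, and your closing remark about why the parity argument is specific to bigons is a nice touch.
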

\begin{proof}
    A small thickening of these cycles branch-cover a small neighborhood of the corresponding edges.  In the complement of these neighborhoods, the map $\Lambda_{\Gamma_{Neck, g}} \to S^2$ is an unbranched cover, which one sees by inspection is disconnected. 
    See Figure \ref{Fig:Pearl-Removal}.  
\end{proof}

\begin{corollary} \label{5-term:Necklace uniqueness}
    Let $L$ be any genus $g$ handlebody filling $\Lambda_{Neck, g}$.  Fix any 
    $\lambda \in H_1(L)$.  
    If $\Psi \in 1 + \widetilde{Sk}_\lambda^{> 0}(L)$ and $\mathbf{A}(\Lambda_{Neck, g}) \Psi = 0$, then $\Psi = 1$.   
\end{corollary}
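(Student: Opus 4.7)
The plan is to apply Corollary \ref{skein meridians cut completion} once per bigon, cutting $L$ along a system of compressing disks, so that $\Psi$ is confined to a three-manifold on which $\widetilde{Sk}^{>0}_\lambda$ is trivial.

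First, I would recast each operator $\mathbf{A}(\rho_i)$ into the shape $\bigcirc - \partial D_i$ required by the corollary. Formula \eqref{necklace A formula} gives $\mathbf{A}(\rho_i) = a^{-1}\bigcirc + [E_i]$ for either edge $E_i$ of the $i$-th bigon; I pick one edge per bigon so that the resulting cycles $E_1,\dots,E_g$ are the ones supplied by Lemma \ref{necklace separate}. Inspecting the handlebody structure of the filling $L$, each $E_i$ bounds a compressing disk $D_i \subset L$, and the $D_i$ may be chosen pairwise disjoint. Remark \ref{Rem: -a^{-1} contribution unknot} identifies the linking-line factor $-a^{-1}$ introduced when $E_i$ is isotoped through $D_i$ into the interior, yielding the skein identity $[E_i] = -a^{-1}[\partial D_i]$ in $Sk(L)$. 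Therefore $\mathbf{A}(\rho_i) = a^{-1}(\bigcirc - [\partial D_i])$ as an operator on $Sk(L)$, and the hypothesis $\mathbf{A}(\rho_i)\Psi = 0$ becomes $(\bigcirc - [\partial D_i])\Psi = 0$ for each $i$.

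I then iterate Corollary \ref{skein meridians cut completion}. Applied to $D_1$ it gives $\Psi \in 1 + \widetilde{Sk}^{>0}_\lambda(L \setminus D_1)$; since the remaining $D_2,\dots,D_g$ are disjoint from $D_1$, the equations $(\bigcirc - [\partial D_i])\Psi = 0$ pass through the restriction unchanged, and after $g$ applications $\Psi \in 1 + \widetilde{Sk}^{>0}_\lambda(L \setminus \bigsqcup_{i=1}^g D_i)$. By Lemma \ref{necklace separate}, the surface $\partial L$ is cut by the $E_i$ into two planar pieces; correspondingly the handlebody $L$ is cut by the $D_i$ into two three-balls. In a disjoint union of three-balls, $H_1$ vanishes, so $\widetilde{Sk}^{>0}_\lambda$ is trivial, forcing $\Psi = 1$.

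The main obstacle I anticipate is the rewriting step: making rigorous the equality $[E_i] = -a^{-1}[\partial D_i]$ in $Sk(L)$, including the framings, so that the precise form of Corollary \ref{skein meridians cut completion} applies. For general $\Psi$ one also needs that the isotopy from $E_i \subset \partial L$ to $\partial D_i$ can be performed in a tubular neighborhood of $D_i$ disjoint from a suitable representative of $\Psi$; the finite $H_1$-degree feature of $\widetilde{Sk}^{>0}_\lambda$ is convenient here, since at each homology degree only finitely many knots appear and one can generically arrange the neighborhood of $D_i$ disjoint from all of them.
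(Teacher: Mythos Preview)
Your overall strategy matches the paper's: rewrite each bigon operator as $\bigcirc - [\partial D_i]$, then iterate the meridian-cutting corollary to confine $\Psi$ to the skein of a ball. The one substantive gap is your sentence ``Inspecting the handlebody structure of the filling $L$, each $E_i$ bounds a compressing disk $D_i \subset L$.'' The statement allows $L$ to be \emph{any} genus $g$ handlebody filling of $\Lambda_{Neck,g}$, and for an arbitrary such filling there is no a priori reason the particular curves $E_i$ should bound --- different handlebody fillings of the same surface have different systems of compressing disks. The paper closes this gap with Lemma~\ref{annihilators must bound}: once you know $(\bigcirc - [E_i])\Psi = 0$ for some $\Psi \in 1 + \widehat{Sk}^{>0}_\lambda(L)$, that lemma forces $E_i$ to bound in $L$. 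So the bounding of the $E_i$ is itself a consequence of the hypothesis $\mathbf{A}(\Lambda_{Neck,g})\Psi = 0$, not of any assumed structure on $L$. Once you insert this step, the rest of your argument goes through exactly as in the paper (and your use of the completion version, Corollary~\ref{skein meridians cut completion}, is the correct one).

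Your worry about the rewriting $[E_i] = -a^{-1}[\partial D_i]$ is legitimate; the paper handles it by pulling the sign/linking lines past $\mathbf{A}$ at the outset so that the operator literally reads $\bigcirc - [E_i]$, which is the same maneuver packaged slightly differently.
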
 
\begin{proof}
We pull the sign/linking lines past the $\mathbf{A}(\Lambda_{Neck, g})$ so that the expression \eqref{necklace A formula}  for the generating elements becomes the more familiar $\bigcirc - [E_i]$.  By Lemma \ref{annihilators must bound}, the $E_i$  appearing in the formula 
\eqref{necklace A formula} bound disks
in the filling.  
Since, per Lemma \ref{necklace separate}, these cycles separate $\Lambda_{Neck,g}$ into two punctured disks, we may ensure the disks in the filling correspondingly separate the filling into two balls.   

Iteratively applying Corollary \ref{skein meridians cut}
implies that image of $\Psi$ in $L_g$ is in the skein of a ball, hence is a scalar.  
Finally, the leading term of $\Psi$ is the scalar $1$, so we are done.
\end{proof}

\begin{theorem} \label{uniqueness} 
    Let $S = (C_1, \ldots, C_n)$ be a composable sequence of cobordisms, with positive end  $\Lambda$ and negative end $\Lambda_{Neck, g}$.  
    Let $L$ be the topological filling of $\Lambda$ obtained from the standard filling $L_g$ of $\Lambda_{Neck, g}$ by the smooth identification 
    $\Lambda \cong \Lambda_{Neck, g}$ using the trivialization of $S$. 
    Assume the controlling $\lambda \in H_1(\Lambda)$ vanishes on meridians of $\Lambda_{Neck, g}$, so extends to $L$. 

    Then  $\mathcal{A}(\Lambda) \Psi = 0 \in \widehat{Sk}_\lambda(L)$
    has a unique solution in the subcompletion $\Psi \in 1 + \widetilde{Sk}_\lambda^{>0}(L)$, namely the image of $\Psi_S$ in $\widehat{Sk}_\lambda(L)$.

\end{theorem}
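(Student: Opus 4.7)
The strategy is to transport the annihilation equation back to $\Lambda_{Neck, g}$ via $\Psi_S$ and then invoke Corollary \ref{5-term:Necklace uniqueness}. The key input is the intertwining
\[
\mathbf{A}(\Lambda)\, \Psi_S = \Psi_S\, \mathbf{A}(\Lambda_{Neck, g})
\]
from Corollary \ref{composable cobordism formula}, combined with the formal invertibility of $\Psi_S$.

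First I would establish invertibility in an appropriate subcompletion. Since $\lambda(\partial D_i) > 0$ for every $i$, the cycles $[\partial D_1], \ldots, [\partial D_n]$ span a strictly convex subcone $C$ of the open half-space $\{\lambda > 0\} \subset H_1(\Lambda; \R)$. Adapting the argument of Lemma \ref{Lift lives in subcompletion} inside $C$, both $\Psi_S$ and its geometric-series inverse $\Psi_S^{-1} = 1 - (\Psi_S - 1) + (\Psi_S - 1)^2 - \cdots$ lie in $1 + \widetilde{Sk}^{>0}_\lambda(\R \times \Lambda)$, with $H_1$-support concentrated in $C$.

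I would then set $\Phi := \Psi_S^{-1} \cdot \Psi \in Sk(L)$, view it as an element of $Sk(L_g)$ via the trivialization of $S$, and check that it lies in $1 + \widetilde{Sk}^{>0}_\lambda(L_g)$. Writing $\Psi_S^{-1} = 1 + y$ with $y$ supported on classes in $C$, and $\Psi = 1 + z$ with $z \in \widetilde{Sk}^{>0}_\lambda(L)$, the only potentially infinite contribution $y \cdot z$ has coefficient at a fixed class $c$ equal to a sum over decompositions $c = c_1 + c_2$ with $c_1 \in C$ and $\lambda(c_2) > 0$; strict convexity of $C$ makes the set of such $c_1$ finite, so each homology coefficient of $\Phi$ is a finite sum of knots. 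The intertwining then gives
\[
\mathbf{A}(\Lambda_{Neck, g})\, \Phi = \Psi_S^{-1}\, \mathbf{A}(\Lambda)\, \Psi = 0,
\]
so Corollary \ref{5-term:Necklace uniqueness} forces $\Phi = 1$, and hence $\Psi = \Psi_S \cdot 1$ is the image of $\Psi_S$ in $\widehat{Sk}_\lambda(L)$.

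I expect the main technical obstacle to be the subcompletion verification: one must ensure that acting by $\Psi_S^{-1}$ on an element of $1 + \widetilde{Sk}^{>0}_\lambda$ does not leak out of $\widetilde{Sk}$ into the coarser completion $\widehat{Sk}$. Strict convexity of the cone $C$ generated by the surgery boundaries is exactly what makes these homology-class-by-homology-class sums finite; without the hypothesis $\lambda(\partial D_i) > 0$ the argument would fail.
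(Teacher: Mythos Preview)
Your proposal is correct and follows essentially the same route as the paper: invert $\Psi_S$, use the intertwining from Corollary \ref{composable cobordism formula} to reduce to $\mathbf{A}(\Lambda_{Neck,g})(\Psi_S^{-1}\Psi)=0$, and apply Corollary \ref{5-term:Necklace uniqueness}. You are in fact more careful than the paper on one point: the paper simply notes that $\Psi_S^{-1}\Psi$ has leading term $1$ and invokes Corollary \ref{5-term:Necklace uniqueness}, whereas you explicitly verify membership in the subcompletion $\widetilde{Sk}$ via the cone argument, which is what that corollary actually requires. One small omission: you only argue uniqueness, not that $\Psi_S$ is itself a solution; the paper cites Corollary \ref{necklace annihilates} (equivalently Corollary \ref{relation for admissible filling}) for this, and you should do the same.
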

\begin{proof}
    By Corollary \ref{necklace annihilates}, $\Psi_S$ is indeed a solution.  Suppose given some other solution, $\Psi \in 1 + \widetilde{Sk}_\lambda^{>0}(L)$. 
    We multiply $\mathbf{A}(\Lambda)\Psi = 0$ by
    $\Psi_S^{-1}$ (note the inverse exists as usual for something in a completion with invertible leading term).  Then we have 
    $$0 = \Psi_S^{-1} \mathbf{A}(\Lambda)\Psi = \mathbf{A}(\Lambda_{Neck, g}) \Psi_S^{-1} \Psi $$
    Now $\Psi$ has leading term $1$ by hypothesis, and $\Psi_S$ is the product of elements with leading term $1$, so also 
    $\Psi_S^{-1} \Psi$ has leading term $1$.  By 
    Lemma \ref{5-term:Necklace uniqueness}, $\Psi_S^{-1} \Psi = 1$.\footnote{Note the product means: compose the operations of pushing in the given element from the boundary, not: push both operations in from the boundary in different copies of $L$ and then multiply using the product structure on the skein of a handlebody.  We never use the product structure on the skein of a handlebody.}
    Applying $\Psi_S$ on both sides, we have $\Psi_S = \Psi$. 
\end{proof}

\section{Composable cobordisms from admissible mutations}
In order to apply our results on composable cobordisms
(Corollary \ref{composable cobordism formula}, Theorem \ref{uniqueness}), we need to check when sequences of cobordisms are composable.  

In particular, suppose given some cubic planar graph $\Gamma = \Gamma_0$,  putative controlling element $\lambda \in H_1(\Lambda_\Gamma)$, sequence of edges $e_1, e_2, \ldots, e_n$ and sequence of signs $\epsilon_1, \epsilon_2, \ldots, \epsilon_n \in \{\pm\}$.  
We write $\Gamma_i$ for the graph obtained by mutating $\Gamma_{i-1}$ at $e_i$.  We write $C_i$ for the corresponding disk surgery cobordism of sign $\epsilon_i$ (recall the sign prescription in Lemma \ref{Lem: CPG topological filling}).  

We would like an algorithmic way of deciding
when the sequence $(C_1, C_2, \ldots, C_n)$ is a composable sequence controlled by $\lambda$.  In other words, we should check when the class of the circle $[e_{i-1}] \in H_1(\Gamma_i)$ are all positive for $\lambda$ under the identifications $ H_1(\Gamma) = H_1(\Gamma_i)$ induced by the cobordism trivializations.  

The main difficulty in doing so is to give an expression for how the homology transforms under the cobordism trivializations.  Fortunately, this was already done in \cite{SSZ}.  We recall their result (which however was proven for a somewhat different presentation of the cobordisms, so we provide a proof in our context).

\begin{lemma}\cite{SSZ}
\label{homology mutation}
    Let $\Lambda_\Gamma$ be a CPG Legendrian and $e \in \Gamma$ an edge. Fix a positive topological Harvey-Lawson filling of the cobordism $L$ associated to the edge $e$. Then the isomorphism 
    $H_1(\Lambda_{\Gamma_e}) \xrightarrow{\sim}  H_1(\Lambda_\Gamma) $ is given by the following formulas
    for homology classes of loops associated to edges of the respective graphs, where $\tilde{e}'$ denotes the edge corresponding to $e'$ after the edge flip at $e$:
    \begin{align*}
        \tilde{e}' \mapsto & e' & \text{$e'$ not adjacent to $e$} \\
        \tilde{e} \mapsto & -e
    \end{align*}
    Let $e_1,\dots e_4$ be the edges adjacent to $e$ such that, in the counterclockwise cyclic order at vertices, $e_2$ and $e_4$ precede $e$ and $e_1,e_3$ come after $e$, see Figure \ref{Fig:Homology maps}. Then the induced map satisfies also:
    \begin{align*}
        \tilde{e}_i \mapsto &e_i+e && i=1,3 \\
        \tilde{e}_i \mapsto &e_i && i=2,4
    \end{align*}
    The corresponding map for negative mutation is:
    \begin{align*}
        \tilde{e}_i \mapsto &e_i && i=1,3 \\
        \tilde{e}_i \mapsto &e_i+e && i=2,4
    \end{align*}
    
    Figure \ref{Fig:Homology maps} also depicts this map for the edges around $e$.
\end{lemma}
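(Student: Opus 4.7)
The plan is to compute the map $H_1(\Lambda_{\Gamma_e}) \to H_1(\Lambda_\Gamma)$ directly from the trivialization $C \cup_{\mathbb{T}_{Cl}} \mathbf{T} \cong \R \times \Lambda$, using the concrete description of the cobordism $C$ from Construction \ref{CPG: Cobordism} (a Clifford sum, followed by an index-$1$ Legendrian surgery, followed by a Legendrian isotopy) together with the filling data from Lemmas \ref{fate of e} and \ref{Lem: CPG topological filling}.

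First I would treat the edges far from $e$. By Construction \ref{CPG: Cobordism}, $C$ is cylindrical over $J^1(\bar D_{1-\epsilon})$, and in particular is trivial on a neighborhood of any edge $e'$ of $\Gamma$ not adjacent to $e$ and not equal to $e$. Its lifted cycle sits inside a trivial sub-cylinder of the cobordism, so the trivialization must carry $\tilde e'$ to $e'$.

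Next I would address $\tilde e$ itself. Lemma \ref{fate of e} already gives $[-e_T] = [e]$ in $H_1(\tilde L)$, where $e_T$ is the tetrahedron edge connect-summed with $e$ during the Clifford sum. Inspecting the last two steps of Construction \ref{CPG: Cobordism} (cf.\ Figure \ref{Fig:CPG-Cobord-3}) should identify the new edge $\tilde e$ at the positive end with the class of $e_T$ carried into the Clifford-torus piece, and Lemma \ref{Lem: CPG topological filling} guarantees that the positive filling $\mathbf{T}$ preserves $[e_T]$ (since the contracted cycle $\gamma$ satisfies $(\gamma, e_T) = +1$ rather than being equal to $e_T$). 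Chaining these three identifications yields $\tilde e \mapsto -e$.

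Finally, for the four adjacent edges $e_1,\dots,e_4$, I would analyze which ones pick up an extra copy of $[e]$ under the handle attachments via a Mayer--Vietoris calculation on the cobordism decomposition: the Clifford sum at a chosen vertex $v$ glues in the tetrahedron along a small disk that separates the four adjacent edges into two pairs, and the subsequent index-$1$ surgery and isotopy localize at the opposite vertex. The expectation is that exactly the pair of adjacent edges lying on the same side as the contracted cycle of $\mathbf{T}$ gains an extra $[e]$, and by Lemma \ref{Lem: CPG topological filling} the positive versus negative fillings contract cycles of opposite sign relative to $e_T$, interchanging the two pairs and accounting for the asymmetry. The hard part will be the bookkeeping of signs and cyclic orders --- which tetrahedron edge is $e_T$, which vertex of $e$ is chosen as $v$, the orientation conventions for $\tilde e$ versus $e$, and which cycle $\gamma$ the positive filling contracts --- but once these are fixed consistently with Figure \ref{Fig:Homology maps}, all four formulas reduce to diagrammatic checks on Figures \ref{Fig:CPG-Cobord-1}--\ref{Fig:CPG-Cobord-3}.
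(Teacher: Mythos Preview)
Your overall strategy---trace the edge classes through each stage of Construction~\ref{CPG: Cobordism} and then cap with the filling---is exactly the one the paper uses, and your treatment of non-adjacent edges is the same.  The difference lies in how the adjacent edges $e_1,\dots,e_4$ and $\tilde e$ are handled.

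The paper does not use Mayer--Vietoris.  Instead it exploits the \emph{face relations}: for any face $f$ of a cubic planar graph, $\sum_{e'\in\partial f}[e']=0$ in $H_1(\Lambda_\Gamma)$.  After the Clifford sum at a vertex $v$, the graph $\Gamma$ is replaced by one in which $v$ becomes a small triangle with tetrahedron labels $0,U,-U$ (Figure~\ref{Fig:Homology-map-1}).  Writing the face relation for each of the three faces touching this triangle, both before and after the sum, gives a linear system $\phi(e_0)+\phi(e_3)+U=e_0+e_3$, $\phi(e_3)+\phi(e_4)-U=e_3+e_4$, $\phi(e_4)+\phi(e_0)=e_4+e_0$, whose unique solution is $\phi(e_0)=e_0-U$, $\phi(e_3)=e_3$, $\phi(e_4)=e_4+U$.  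The index-$1$ surgery then glues in a disk that sets $e_0-U=0$, so $U=e$ everywhere; this simultaneously gives $\tilde e\mapsto -e$ (from the triangle edge labeled $-U$) and the formulas for $\tilde e_1,\dots,\tilde e_4$.  The positive/negative dichotomy then just reflects which cycle $\gamma$ the chosen filling contracts.

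This face-relation approach is cleaner than Mayer--Vietoris here because the face relations already present $H_1(\Lambda_\Gamma)$ as edges modulo explicit linear relations, so the ``bookkeeping of signs and cyclic orders'' you anticipate collapses to solving three linear equations.  Your route via Lemma~\ref{fate of e} for $\tilde e$ is valid but slightly roundabout: in the paper $\tilde e\mapsto -e$ falls out of the same computation rather than requiring a separate identification of $\tilde e$ with $e_T$.
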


\begin{figure}
    \begin{picture}(20,185)
    \put(-150,0){\includegraphics[width=12cm, trim=0cm 0cm 0cm 0cm, clip]{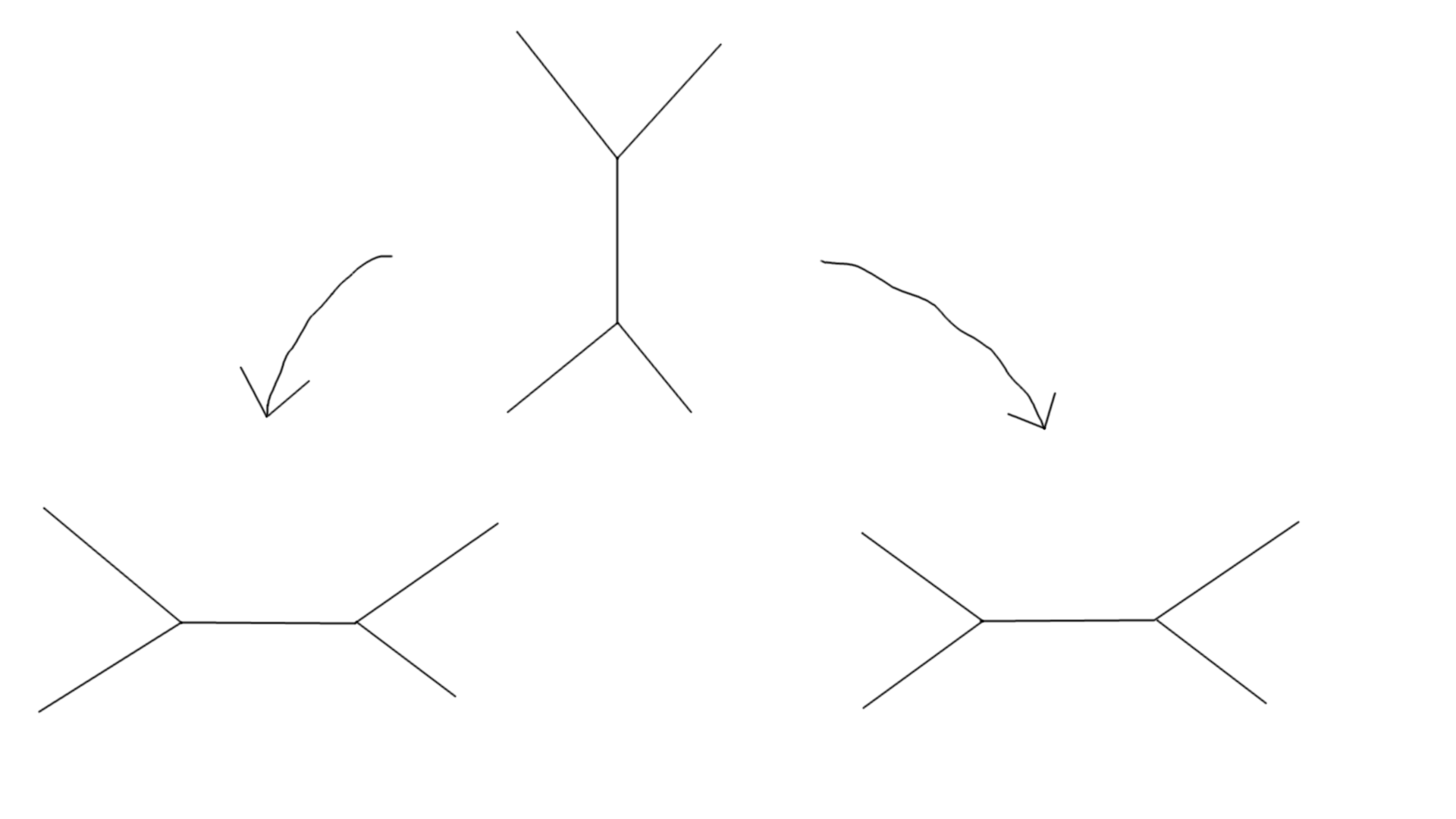}}
    \put(-90,120){$-$}
    \put(75,120){$+$}
    
    \put(0,132){$e$}
    \put(5,162){$e_1$}
    \put(-27,162){$e_2$}
    \put(-27,112){$e_3$}
    \put(8,112){$e_4$}

    \put(-92,50){$-e$}
    \put(-65,60){$e_1$}
    \put(-120,60){$e_2+e$}
    \put(-120,30){$e_3$}
    \put(-82,30){$e_4+e$}
    
    \put(90,50){$-e$}
    \put(110,60){$e_1+e$}
    \put(70,60){$e_2$}
    \put(70,30){$e_3+e$}
    \put(125,30){$e_4$}

    \end{picture}
    \caption{
    Positive and negative mutation of edge labels, per \cite{SSZ}. 
    }  
    \label{Fig:Homology maps}
\end{figure}

\begin{figure}
    \begin{picture}(20,155)
    \put(-150,-30){\includegraphics[width=12cm, trim=0cm 0cm 0cm 0cm, clip]{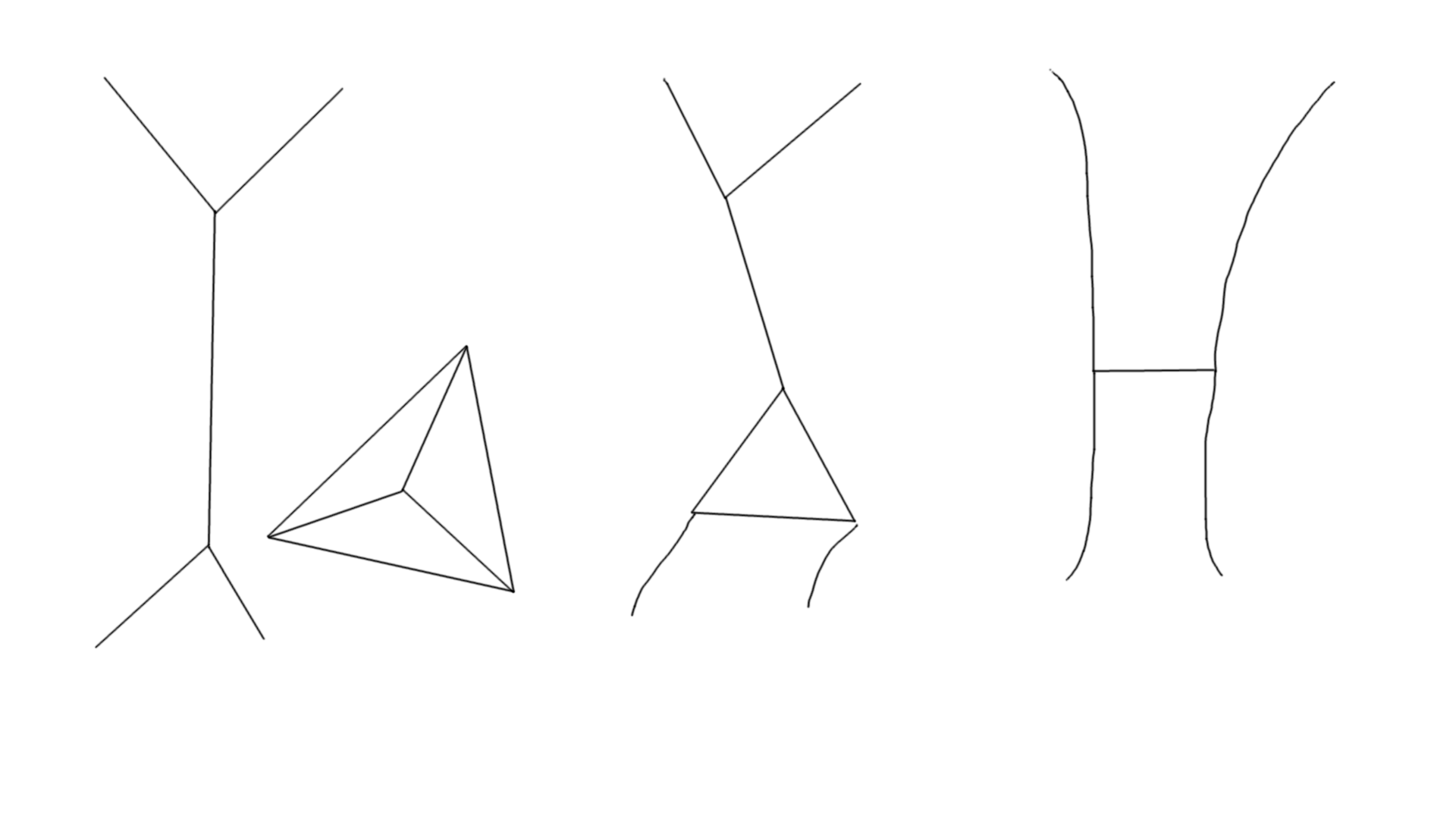}}

    \put(-95,70){$e$}
    \put(-88,120){$e_1$}
    \put(-120,120){$e_2$}
    \put(-125,25){$e_3$}
    \put(-90,25){$e_4$}

    \put(-70,33){$0$}
    \put(-75,22){$U$}
    \put(-85,55){$-U$}
    \put(-30,45){$0$}
    \put(-48,55){$U$}
    \put(-50,40){$-U$}
    
    \put(30,90){$e-U$}
    \put(35,120){$e_1$}
    \put(0,120){$e_2$}
    \put(-15,35){$e_3$}
    \put(50,30){$e_4+U$}
    \put(15,55){$U$}
    \put(42,55){$0$}
    \put(20,30){$-U$}
    
    \put(110,55){$-e$}
    \put(155,120){$e_1$}
    \put(70,120){$e_2+e$}
    \put(85,30){$e_3$}
    \put(145,30){$e_4+e$}

    \end{picture}
    \caption{On the left a neighborhood of the edge $e$ and a tetrahedron graph with a choice of basis coming from the chosen smoothing such that $V=0$. In the middle after identifying the lower edge of $\Gamma$ and the lower left edge of the tetrahedron graph. On the right the result after gluing in the disk which sets $e-U=0$.}  
    \label{Fig:Homology-map-1}
\end{figure}

\begin{proof}

    We will do the calculation in the case of negative mutation. The case of positive mutation is proven similarly.
    
    Consider the first stage of the Cobordism from Construction \ref{CPG: Cobordism}, i.e. the  ``Clifford sum'' cobordism. The cobordism is a cylinder in a neighborhood of all edges not adjacent to the two vertices which merge. As we chose negative mutation the gluing can be done as indicated in the middle of Figure \ref{Fig:Homology-map-1}. The other cases of negative mutation are proven similarly. As this part of the cobordism only concerns a neighborhood of the two merging vertices, we record that this cobordism maps all other edges by the identity. The same is true for the remaining edges of the tetrahedron graph Legendrian. Now, recall that each face must fulfill the condition that the product over all edges adjacent to a face must vanish. So we obtain the equation for the remaining $3$ edges:

    \begin{align*}
        (\Sigma_{e' \in F,e'\neq e_{i},e_j} e')+e_i+e_j & =0 & i,j \in \{0,3,4\} &,  i \neq j \\
        (\Sigma_{e' \in F,e'\neq e_{i},e_j} e' )+\phi(e_i)+\phi(e_j)+e_{i,j} & =0 & i,j \in \{0,3,4\} &,  i \neq j 
    \end{align*}

where $e=:e_0$, $e_{3,4}=-U$, $e_{0,3}=U$ and $e_{0,4}=0$. $i,j$ are considered to be unordered. Setting the corresponding pairs equal, we obtain the following equation:

\begin{align*}
    \phi(e_0)+\phi(e_3)+U &= e_0+e_3 \\ 
    \phi(e_3)+\phi(e_4)-U &= e_3+e_4 \\
    \phi(e_4)+\phi(e_0) &= e_4+e_0
\end{align*}

A straightforward verification shows:

\begin{align*}
    \phi(e_0)&=e_0-U \\
    \phi(e_3)&=e_3 \\
    \phi(e_4)&=e_4+U
\end{align*}

Next, we consider the result of the $1$-surgery which corresponds to gluing in a $2$-disk. Again all edges not adjacent to it just propagate upwards, so the homology map maps them by the identity. Furthermore, it kills the $\phi(e)$ edge which enforces on the homology level the equation $e_0-U=0$. So the edge labeled $-U$ from $\Lambda_{\Gamma_v}$ can now be labeled $-e$. It remains to label the two remaining edges. To do so, we consider $e_1+\phi(e_0)+U$ and $e_2+\phi(e_0)+U$ which is represented by a loop which goes from one end of the triangle around $\phi(e_0)$ to the end of either $e_1$ or $e_2$. As can be seen in Figure \ref{Fig:Homology-map-1}. These avoid the edge which vanishes by the surgery. So finally $\phi$ can be modified by: $\phi(e_1)=e_1$,  $\phi(e_2) = e_2 + e$ and $\phi(e_0)=-e_0$.
\end{proof}

For comparison with \cite{SSZ}, let us recall their definition of `admissible sequence of mutations'.  

\begin{definition}(\cite{SSZ})
    Let $(U_j, V_j)$ be the basis for $H_1(\Gamma_{Neck})$ in which the 
    classes of the cycles associated to the edges
    are as depicted in Figure \ref{Fig:Necklace-basis}.
    We will call this the standard basis for the necklace graph Legendrian. 
    
    Let $(e_1,\dots e_n)$ be a sequence of edges and $(\epsilon_1,\dots,\epsilon_n)$ a sequence in $\{1,-1\}$.  Consider the sequence $\Gamma_0=\Gamma_{Neck,g}$,  $\Gamma_i=(\Gamma_{i-1})_{e_i}$.  This is said to be {\em admissible}
    if the homology class of $\epsilon_i e_i$ written in the basis $(U_j,V_j)$ has no negative coefficients in the $U_j$, and at least one of said coefficients is positive.  
\end{definition}

\begin{remark}
The `seeds' of \cite{SSZ} are q-lifts of the choice of homology basis, and their transformation relation \cite[Figure 4.2.2., page 20/21]{SSZ}
is compatible with Lemma \ref{homology mutation}.     
\end{remark}

We translate this as follows: 

\begin{lemma} \label{admissible implies composable}
     Let $\lambda \in H^1(\Lambda_{\Gamma_{Neck}})$
     vanish on the $V_j$ and be positive on the $U_i$. 
    If a sequence $(e_1,\dots,e_n)$ and $(\epsilon_1,\dots,\epsilon_n)$ defines an admissible sequence of mutations then
    the corresponding sequence of disk surgery cobordisms is composable and controlled by $\lambda$. 
    Conversely, if the sequence is composable for all such $\lambda$, then it is admissible. 
\end{lemma}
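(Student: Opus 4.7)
The plan is to reduce the statement to a straightforward linear algebra exercise after unpacking what ``composable controlled by $\lambda$'' means in terms of the basis $(U_j, V_j)$.

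First, I would unpack the composability condition of Corollary \ref{composable cobordism formula}: the sequence $(C_1, \dots, C_n)$ is composable controlled by $\lambda$ precisely when, writing $\lambda_i \in H^1(\Lambda_{\Gamma_i})$ for the pullback of $\lambda$ through the composition of cobordism trivializations $\Lambda_{\Gamma_{Neck}} \cong \Lambda_{\Gamma_1} \cong \cdots \cong \Lambda_{\Gamma_{i-1}}$, one has $\lambda_{i-1}(\partial D_i) > 0$ for every $i$. Next, using Lemma \ref{Lem: CPG topological filling} together with the sign convention of Theorem \ref{cubic planar graph cluster}, I would identify the boundary class of the surgery disk as $[\partial D_i] = \epsilon_i [e_i] \in H_1(\Lambda_{\Gamma_{i-1}})$, where $\epsilon_i \in \{+,-\}$ records whether $C_i$ uses the positive or negative Harvey-Lawson filling.

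Next I would compare this with admissibility. The homology class ``$\epsilon_i e_i$ written in the basis $(U_j, V_j)$'' means exactly the iterated pullback of $\epsilon_i [e_i] \in H_1(\Lambda_{\Gamma_{i-1}})$ to $H_1(\Lambda_{\Gamma_{Neck}})$ via the cobordism trivializations, and by Lemma \ref{homology mutation} this iterated pullback is computed by composing the explicit formulas for positive/negative mutation according to the signs $\epsilon_1, \dots, \epsilon_{i-1}$. Writing this iterated class as $c_i = \sum_j a_{i,j} U_j + \sum_j b_{i,j} V_j$, the definition of admissibility says that for every $i$: all $a_{i,j} \geq 0$, and some $a_{i,j} > 0$.

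Now the equivalence reduces to linear algebra. Since $\lambda(V_j) = 0$ and $\lambda(U_j) > 0$, one has $\lambda_{i-1}(\partial D_i) = \lambda(c_i) = \sum_j a_{i,j}\, \lambda(U_j)$. Admissibility of the sequence implies each summand is non-negative and at least one is strictly positive, hence $\lambda(c_i) > 0$, proving the forward direction. For the converse: if composability holds for every $\lambda$ that vanishes on the $V_j$ and is strictly positive on the $U_j$, then by choosing such $\lambda$ with $\lambda(U_{j_0})$ very large and the other $\lambda(U_j)$ very small, positivity of $\lambda(c_i)$ forces $a_{i,j_0} \geq 0$ for every $j_0$; and evaluating on the choice $\lambda(U_j) = 1$ then forces $\sum_j a_{i,j} > 0$, so some $a_{i,j} > 0$. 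This is precisely admissibility.

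The main obstacle, and the only non-formal part, is verifying the sign identification $[\partial D_i] = \epsilon_i [e_i]$ together with the compatibility between the positive/negative cobordism trivializations of Lemma \ref{Lem: CPG topological filling} and the two homology maps recorded in Lemma \ref{homology mutation}. Once this bookkeeping is done, the rest of the argument is the elementary duality between non-negative integer combinations and evaluation against strictly positive linear functionals.
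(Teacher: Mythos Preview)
Your proposal is correct and follows the same logical skeleton as the paper's proof: reduce composability to the inequality $\lambda(\epsilon_i e_i) > 0$ via the sign identification $[\partial D_i] = \epsilon_i[e_i]$, then observe this matches the admissibility condition. The paper's proof is almost entirely devoted to the step you flag as ``the main obstacle'': it verifies the sign identification concretely by tracking the induced $1$-form on the Clifford torus through Lemma \ref{fate of e} (so that $\lambda'(e_T) = -\lambda(e_i)$), locating $e_T$ relative to the contractible cycle $\gamma$ in Figure \ref{Fig:Homology-map-1}, and then invoking Lemma \ref{Lem: CPG topological filling} to conclude that $\lambda(e_i) > 0$ permits the positive filling. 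By contrast, your write-up is more explicit on the surrounding linear algebra and actually supplies an argument for the converse direction (varying $\lambda$ over the open cone to force each $a_{i,j} \ge 0$), which the paper's proof leaves implicit. So the two are complementary: the paper does the geometric bookkeeping you defer, and you do the elementary duality argument the paper omits.
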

\begin{proof}
     Consider the induced $1$-form $\lambda'$ on $\mathbf{T}$ which is given by $\lambda(e_i + e_T)=0$ and thus $\lambda'(e_T)=-\lambda(e_i)$ where $e_T$ is the edge from Lemma \ref{fate of e}. Note that in the case of positive mutation, see Figure \ref{Fig:Homology-map-1}, the edge $e_T$ succeeds (in the anticlockwise cyclical order) the contractible edge $\gamma$. Thus by the intersection pairing for their lifts $(\gamma,e_T)=-1$ and thus $(\gamma,\cdot)=-\lambda'$ which by Lemma \ref{Lem: CPG topological filling} implies that we can positively fill the associated cobordism. So if $\lambda$ evaluates positively on $e_i$ we can compose the positive mutation cobordism. One proves the case for negative mutation similarly. 
\end{proof}

\begin{corollary} \label{admissible sequences determined}
        Let $\Gamma$ be a cubic planar graph which can be obtained from $\Gamma_{Neck, g}$ by a sequence of admissible mutations.  Let $L$ be the handlebody filling of $\Gamma$ obtained by composing the disk surgery cobordisms with the standard filling of 
    $\Gamma_{Neck, g}$.  Then for appropriate $\lambda$,  
    $$\mathcal{A}(\Lambda_\Gamma) \cdot \Psi = 0 \in \widehat{Sk}_\lambda(L_g)$$ 
    has a unique solution $\Psi = 1 + \cdots$, namely the appropriate product of $\mathbf{E}(\partial D)$ for the mutation disks.  
    
    In particular, given any Lagrangian $L' \subset \C^3$
    filling $\Sigma_\Gamma$ which has the same topology (i.e. contracts the same cycles) as $L$, the skein-valued count of curves on $L'$ is this $\Psi$. 
\end{corollary}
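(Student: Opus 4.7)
The proof is a synthesis of existence, uniqueness, and comparison with the geometric curve count, each drawing on machinery developed earlier in the paper.

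First I would set up the controlling form and the cobordism sequence. By hypothesis, $\Gamma$ is reached from $\Gamma_{Neck,g}$ by an admissible sequence of edges $(e_1,\ldots,e_n)$ with signs $(\epsilon_1,\ldots,\epsilon_n)$. Choose $\lambda \in H^1(\Lambda_{\Gamma_{Neck,g}})$ vanishing on the $V_j$ and strictly positive on the $U_j$. Lemma \ref{admissible implies composable} then shows that the resulting sequence $S = (C_1,\ldots,C_n)$ of disk surgery cobordisms, produced by Construction \ref{CPG: Cobordism}, made perfect via Theorem \ref{CPG: Curve Count}, and topologically trivialized as in Lemma \ref{Lem: CPG topological filling}, is composable and controlled by $\lambda$. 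Since $\lambda$ annihilates meridians of $\Lambda_{Neck,g}$, it extends through the topological trivializations to a class on the handlebody filling $L$.

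For existence, I would invoke Corollary \ref{composable cobordism formula} to form
\[
\Psi_S = \mathbf{E}(\partial D_n) \cdots \mathbf{E}(\partial D_1),
\]
which satisfies $\mathbf{A}(\Lambda_\Gamma)\Psi_S = \Psi_S \mathbf{A}(\Lambda_{Neck,g})$. Combining this with Corollary \ref{relation for admissible filling} applied to the standard exact filling of $\Lambda_{Neck,g}$ gives that the image of $\mathbf{A}(\Lambda_\Gamma)\Psi_S$ in $\widehat{Sk}_\lambda(L)$ vanishes, so $\Psi_S$ solves the stated equation. Lemma \ref{Lift lives in subcompletion} locates $\Psi_S$ inside $1 + \widetilde{Sk}_\lambda^{>0}(L)$, so uniqueness follows directly from Theorem \ref{uniqueness}, applied with $\Lambda_{Neck,g}$ as the negative end of the composed cobordism.

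For the ``in particular'' statement, let $L' \subset \C^3$ be a Lagrangian filling of $\Sigma_\Gamma$ sharing the topology of $L$. Lemma \ref{wave function} gives $\mathbf{A}(\Lambda_\Gamma)\Psi(L') = 0$ in $\widehat{Sk}_\lambda(L')$, and the topological identification $L' \cong L$ transports this annihilation to $\widehat{Sk}_\lambda(L)$ together with its skein action. The main obstacle is to verify that $\Psi(L')$ lies in the smaller subcompletion $1 + \widetilde{Sk}_\lambda^{>0}$, which is strictly stronger than a priori membership in $\widehat{Sk}^{\geq 0}_\lambda$; this is what is needed to feed $\Psi(L')$ into Theorem \ref{uniqueness}. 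I would argue this finiteness from Gromov compactness applied to curves bounded on $L'$ with prescribed $H_1$-class and hence prescribed area, invoking the Calabi--Yau and Maslov-zero hypotheses to guarantee that only finitely many knot types can appear at each fixed homology class. Once this is in hand, Theorem \ref{uniqueness} forces $\Psi(L') = \Psi_S$, completing the proof.
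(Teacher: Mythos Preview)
Your argument for the first part (existence and uniqueness of $\Psi_S$) is correct and matches the paper's proof, which simply cites Theorem \ref{uniqueness} and Lemma \ref{admissible implies composable}; you have spelled out the intermediate steps in more detail, but the logic is the same.

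There is, however, a genuine gap in your treatment of the ``in particular'' clause. You correctly identify that to apply Theorem \ref{uniqueness} to $\Psi(L')$ one must know $\Psi(L') \in 1 + \widetilde{Sk}_\lambda^{>0}(L)$ rather than merely $\widehat{Sk}_\lambda^{\ge 0}(L)$. But your proposed justification via Gromov compactness does not work, and the paper says so explicitly just before Lemma \ref{Lift lives in subcompletion}: ``We do not know a priori that curve counts live in $\widetilde{Sk}^{\geq 0}_\lambda(L_g)$: it could be that curves of increasing genera have boundaries in the same $H^1(L_g)$ class, but tracing out ever more complicated knots.'' Gromov compactness bounds the number of curves of fixed area \emph{and fixed genus}; it does not prevent the boundary knot types from becoming arbitrarily complicated as the genus grows at fixed $H_1$-class. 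Your appeal to Calabi--Yau and Maslov-zero hypotheses does not repair this: those control index, not boundary complexity.

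The paper's own proof is terse and does not visibly address this point either, so one should read the ``in particular'' as applying to those $L'$ for which $\Psi(L')$ is known to lie in the subcompletion (or, more charitably, as a statement the authors expect to hold but have only fully justified when $\Psi(L')$ happens to land in $\widetilde{Sk}$). In any case, you should not claim to have proved the finiteness by Gromov compactness; either leave it as a stated hypothesis on $L'$, or flag it as the same open issue the paper acknowledges.
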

\begin{proof}
    Follows from Theorem \ref{uniqueness} and Lemma \ref{admissible implies composable}. 
\end{proof}

\begin{remark} \label{composable more general}
    We see from Lemma \ref{admissible implies composable} that the notion of composable cobordism is much more general than that of admissible mutation, even in the context of cubic planar graphs. 
\end{remark}

The following is useful in calculations: 

\begin{lemma} \label{check homology only}
    Let $(e_1, \ldots, e_n)$ and $(\epsilon_1, \ldots, \epsilon_n)$ be the edges and signs defining a composable sequence $S$ of mutations beginning at the necklace graph, and let $(e'_1, \ldots, e'_m)$ and $(\epsilon'_1, \ldots, \epsilon'_m)$ be another, call it $S'$.

    Assume that the two mutation sequences induce the same map in homology.  Then $\Psi_S = \Psi_{S'} \in \widehat{Sk}(L)$, where $L$ is the standard filling of the necklace graph Legendrian. 
\end{lemma}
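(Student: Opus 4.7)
My plan is to invoke Theorem \ref{uniqueness} to characterize both $\Psi_S$ and $\Psi_{S'}$ as the unique solution of the same annihilation equation in the same subcompletion of $\widehat{Sk}(L)$, whence they must be equal.

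First, I would observe that since both sequences start at $\Lambda_{Neck,g}$ and induce the same map on $H_1$, they necessarily end at the same cubic planar graph $\Gamma$, with common positive end $\Lambda = \Lambda_\Gamma$. Moreover, the controlling class on $L$ (obtained by pushing the class $\lambda_\Lambda \in H^1(\Lambda)$ to $H^1(L)$ along the trivialization) depends only on the induced map $H_1(\Lambda) \to H_1(L)$, which by hypothesis is the same for $S$ and $S'$; call the common class $\lambda$. By Lemma \ref{Lift lives in subcompletion} applied to each of the two sequences, both $\Psi_S$ and $\Psi_{S'}$ then lie in the same subcompletion $1 + \widetilde{Sk}^{>0}_\lambda(L)$. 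By Corollary \ref{relation for admissible filling}, each annihilates $\mathbf{A}(\Lambda)$ acting on $\widehat{Sk}(L)$ through the identification $\partial L \cong \Lambda$ furnished by its own trivialization.

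Next, I would apply Theorem \ref{uniqueness}. For each sequence, the theorem identifies $\Psi_S$ (respectively $\Psi_{S'}$) as the unique element of $1+\widetilde{Sk}^{>0}_\lambda(L)$ satisfying the corresponding annihilation equation. The core of that uniqueness argument, which via the conjugation relation $\mathbf{A}(\Lambda)\Psi_S = \Psi_S \mathbf{A}(\Lambda_{Neck})$ reduces to Corollary \ref{5-term:Necklace uniqueness}, depends only on the $\lambda$-graded leading-term structure of elements of $\widehat{Sk}(L)$ and on the operator $\mathbf{A}(\Lambda_{Neck})$ at the necklace end (which does not depend on any choice of trivialization of $S$ or $S'$). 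Since both $\Psi_S$ and $\Psi_{S'}$ satisfy the premise of the uniqueness argument relative to this common necklace end with the common grading $\lambda$, they must coincide.

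The main obstacle I expect is ensuring that the two trivializations really produce comparable data: a priori they differ by a self-diffeomorphism of $\partial L$ which is trivial on $H_1$ (an element of the Torelli group), and one would need to check that such a diffeomorphism does not disturb the uniqueness characterization. The cleanest way to dispatch this, I think, is to work not in $\widehat{Sk}(L)$ directly but in the skein of the topologically composed cobordism \emph{before} gluing in the standard necklace filling: here the relation $\mathbf{A}(\Lambda)\Psi = \Psi\mathbf{A}(\Lambda_{Neck})$ holds intrinsically in the skein of $\R\times\Lambda$, and only the induced $H_1$-map is needed to match the completions and to push the identity $\Psi_S \Psi_{S'}^{-1} = 1$ — established at the necklace end via Corollary \ref{5-term:Necklace uniqueness} — back up to the equality in $\widehat{Sk}(L)$.
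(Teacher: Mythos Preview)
Your proposal correctly identifies the main obstacle --- the two trivializations differ by an element of the Torelli group, so the action of $\mathbf{A}(\Lambda)$ on $\widehat{Sk}(L)$ is not a priori the same for $S$ and $S'$ --- but your workaround does not close the gap. Working in $\widehat{Sk}(\R\times\Lambda)$ one has $\mathbf{A}(\Lambda)_S\,\Psi_S = \Psi_S\,\mathbf{A}(\Lambda_{Neck})$ and $\mathbf{A}(\Lambda)_{S'}\,\Psi_{S'} = \Psi_{S'}\,\mathbf{A}(\Lambda_{Neck})$, where $\mathbf{A}(\Lambda)_S$ and $\mathbf{A}(\Lambda)_{S'}$ denote the transports of $\mathbf{A}(\Lambda)$ via the two trivializations; these are different skein operators unless the trivializations agree on isotopy classes of curves, not merely on $H_1$. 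Consequently, if you form $\Psi_{S'}^{-1}\Psi_S$ (or $\Psi_S\Psi_{S'}^{-1}$) and try to conjugate, you are forced at the middle step to identify $\mathbf{A}(\Lambda)_S$ with $\mathbf{A}(\Lambda)_{S'}$, which is exactly the unproven Torelli statement. So you never obtain an equation of the form $\mathbf{A}(\Lambda_{Neck})\cdot(\cdots)=0$ to which Corollary~\ref{5-term:Necklace uniqueness} applies.

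The paper's proof avoids this by replacing the algebraic inverse $\Psi_{S'}^{-1}$ with the \emph{geometric} reverse: it forms the honest composable sequence $(e_1,\ldots,e_n,\tilde e'_m,\ldots,\tilde e'_1)$ with signs $(\epsilon_1,\ldots,\epsilon_n,-\epsilon'_m,\ldots,-\epsilon'_1)$, using the homology hypothesis precisely to verify composability of the second half. This produces a genuine composable sequence from $\Lambda_{Neck}$ back to $\Lambda_{Neck}$, so Corollary~\ref{relation for admissible filling} yields $\mathbf{A}(\Lambda_{Neck})\cdot(\Psi_{\overline{S}'}\Psi_S)=0$ in $\widehat{Sk}(L)$. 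Because Corollary~\ref{5-term:Necklace uniqueness} holds for \emph{any} genus~$g$ handlebody filling of $\Lambda_{Neck,g}$, the residual Torelli twist on the boundary is harmless, and one concludes $\Psi_{\overline{S}'}\Psi_S=1$. The same reasoning applied to $S'$ in place of $S$ gives $\Psi_{\overline{S}'}\Psi_{S'}=1$, and cancelling $\Psi_{\overline{S}'}$ on the left finishes the proof. The key idea you are missing is that one must return all the way to the necklace end --- where the uniqueness statement is insensitive to the filling --- rather than compare at the $\Lambda$ end.
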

\begin{proof}
    The force of the lemma is that we {\em do not} assume that the trivializations induce the same map on simple closed curve classes, so we can't simply invoke the uniqueness result of Theorem \ref{uniqueness} directly.  
    
    Instead, we reason as follows.  Since the maps are the same in homology, it follows that in fact
    $(e_1, \ldots, e_n, \tilde e_m, \ldots, \tilde e_1)$
    and $(\epsilon_1, \ldots, \epsilon_n, - \epsilon'_m, \ldots, -\epsilon'_1)$ define a composable sequence,  now from the necklace graph to itself.  We write $\overline{S}'$ for the second half of this sequence.  
    
    By Corollary \ref{relation for admissible filling}, we find that the image of  $\Psi_{\overline{S}'} \Psi_S$ in $\widehat{Sk}(L)$ is annihilated by $\mathbf{A}(\Lambda_{Neck})$.  Now by Theorem  \ref{uniqueness} and Lemma \ref{Lift lives in subcompletion}, we may conclude $\Psi_{\overline{S}'} \Psi_S = 1 \in \widehat{Sk}(L)$.  On the other hand, we may apply exactly the same reasoning to conclude 
    $\Psi_{\overline{S}'} \Psi_{S'} = 1 \in \widehat{Sk}(L)$.
    Cancelling
    on the left gives the desired result.
\end{proof}

\begin{figure}
    \begin{picture}(50,125)
    \put(-150,0){\includegraphics[width=15cm, trim=0cm 4cm 0cm 0cm, clip]{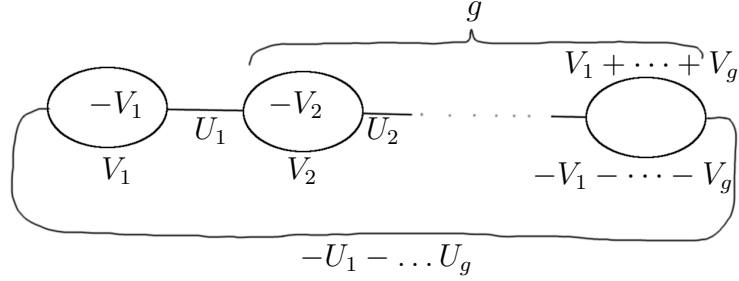}}
    \put(63,112){$g$}
    \put(-75,50){$V_1$}
    \put(-80,75){$-V_1$}

    \put(-5,50){$V_2$}
    \put(-12,75){$-V_2$}
    
    \put(-40,65){$U_1$}
    \put(25,65){$U_2$}

    \put(100,90){$V_1+\dots +V_g$}
    \put(88,48){$-V_1-\dots-V_g$}

    \put(0,17){$-U_1-\dots U_g$}
    
    \end{picture}
    \caption{The necklace graph of genus $g$ with a special choice of basis. (The quantum torus version of this basis appears in \cite[page 54]{SSZ}.) }   
    \label{Fig:Necklace-basis}  
\end{figure}

We will now study how the diffeomorphism
$\Lambda_\Gamma \cong \Lambda_{\Gamma_e}$ act on isotopy classes of simple closed curves, rather than just on homology.

\begin{lemma}\label{dehn twist}
    Fix an edge $e$ of a CPG $\Gamma$.  We consider the mutation at $e$.  As always, denote by $[\mathrm{edge}]$ the oriented closed curve associated to the edge.  
    We preserve the notations $\tilde e$, $e_i$, $\tilde e_i$ from Lemma \ref{homology mutation}.  

    Under the diffeomorphism $\Lambda_\Gamma \cong \Lambda_{\Gamma_e}$, we find that $[\tilde e]$ is the same curve as $[e]$ but with reversed orientation. 
    
    If the mutation was positive then for $i=2,4$ the closed curves $[\tilde e_i]$ and $[e_i]$ are homotopic while if $i=1,3$ $[\tilde e_i]$ is 
    the Dehn twist of $[e_i]$ around $[e]$. If the mutation is negative then the two cases are reversed.
\end{lemma}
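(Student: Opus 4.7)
The plan is to reduce everything to a local topological computation inside a neighborhood $N$ of the surgery region, and then use the already-established homology map to pin down which mapping class is produced.

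First, recall from Construction \ref{CPG: Cobordism} that the cobordism $\tilde L \cup_{\T_{Cl}} \mathbf{T}$ is a trivial cylinder over $\Lambda_\Gamma \cap J^1(\bar D_{1-\epsilon})$; in particular, the induced diffeomorphism $\Lambda_\Gamma \cong \Lambda_{\Gamma_e}$ is supported in a neighborhood $N$ of the lift of a disk on $S^2$ containing $e$ and the inner endpoints of $e_1,\dots,e_4$. So every edge-curve $[\tilde e']$ with $e'$ not adjacent to $e$ is isotopic outside $N$ to $[e']$, and I only need to track what happens to the five edge-curves $[\tilde e], [\tilde e_1], [\tilde e_2], [\tilde e_3], [\tilde e_4]$ inside $N$.

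Next I would handle $[\tilde e]$ by direct inspection. In the sequence of moves of Figures \ref{Fig:CPG-Cobord-1}, \ref{Fig:CPG-Cobord-2}, \ref{Fig:CPG-Cobord-3}, the vertical cylinder traced out by the new edge $\tilde e$ glues to the Morse flow of the 1-handle (coming from the index-$1$ surgery) and then to the meridional disk of the Harvey–Lawson filling. Read from the picture, the boundary of this 2-chain hits $[e]$ once (with orientation opposite to that of $[e]$), in agreement with the class $-e$ from Lemma \ref{homology mutation}. Since $[\tilde e]$ and $[e]$ then both lie on a standard branch of the front near $e$ and agree as unoriented curves, they are isotopic with reversed orientation.

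For $[\tilde e_i]$ with $i=2,4$ (in the positive-mutation case), the corresponding lifts of the edges meeting the ``outer'' corners of the triangle that replaces $e$ can be pushed out of $N$ inside $\Lambda_{\Gamma_e}$: their parts inside $N$ bound obvious disks, together with the relevant part of $[e_i]$, against the cusp edges visible in Figure \ref{Fig:CPG-Cobord-3}. Thus $[\tilde e_i] \simeq [e_i]$ for $i=2,4$. For $i=1,3$, the analogous isotopy is obstructed by the edge of the triangle that carries the wrap around the contracted locus of $e$; making this precise, one sees $[\tilde e_i]$ differs from $[e_i]$ by a simple loop around $[e]$. Now I would invoke the standard fact that a homeomorphism supported in an annular neighborhood of a simple closed curve $\gamma$, and acting on a transverse curve $c$ by $c \mapsto c + k\gamma$ homologically with $k = \pm 1$, is isotopic to $T_\gamma^{\pm 1}$ restricted to $c$; the sign is then fixed by matching with the formula $\tilde e_i = e_i + e$ from Lemma \ref{homology mutation}. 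The case of negative mutation is entirely symmetric, with the roles of $\{1,3\}$ and $\{2,4\}$ swapped.

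The main obstacle is making the step ``$[\tilde e_i]$ for $i=1,3$ wraps around $[e]$ exactly once'' fully rigorous, as opposed to claiming it directly from the figures. The cleanest way I can see to do this is to show that the three cobordism pieces (Clifford sum, index-1 surgery, Reeb-cancelling isotopy) can each be chosen supported in nested neighborhoods, and then carry out a standard isotopy extension argument; the intersection number with $[e]$ is preserved by the last two and picks up a single contribution from the Clifford sum, which is where the branching profile of the front changes. Once these local intersection counts are verified, the Dehn twist identification is automatic.
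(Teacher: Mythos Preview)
Your approach is essentially the same as the paper's, but you have overcomplicated it and in doing so created a phantom gap. The paper's proof is a one-line reduction: the diffeomorphism is supported in an annulus $A \subset \Lambda_\Gamma$ (namely the branched double cover over a disk in $S^2$ containing $e$ and its two endpoint vertices---two branch points, so the cover is an annulus with core $[e]$), and in an annulus the isotopy class rel boundary of a properly embedded arc is determined by its relative homology class. Since Lemma~\ref{homology mutation} already computes that relative homology action, you are done: $\tilde e_i \mapsto e_i + e$ for $i=1,3$ forces the arc to be the Dehn twist of $[e_i]$ about $[e]$, and $\tilde e_i \mapsto e_i$ for $i=2,4$ forces the arc to be isotopic to $[e_i]$.

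You in fact state this ``standard fact'' yourself, but then fail to notice that it already closes the argument. Your attempt to first verify by picture-tracking that $[\tilde e_i]$ ``wraps once'' for $i=1,3$, and your proposed fix of carrying intersection numbers through the three cobordism stages, are both unnecessary: the wrapping number \emph{is} the relative homology class in the annulus, and that was computed in Lemma~\ref{homology mutation}. The only thing you should make explicit that you currently leave implicit is that your neighborhood $N$ really is an annulus (not just some neighborhood), since that is what makes homology sufficient to determine isotopy classes of arcs.
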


\begin{figure}
    \begin{picture}(20,155)
    \put(-100,-30){\includegraphics[width=12cm, trim=0cm 3cm 5cm 0cm, clip]{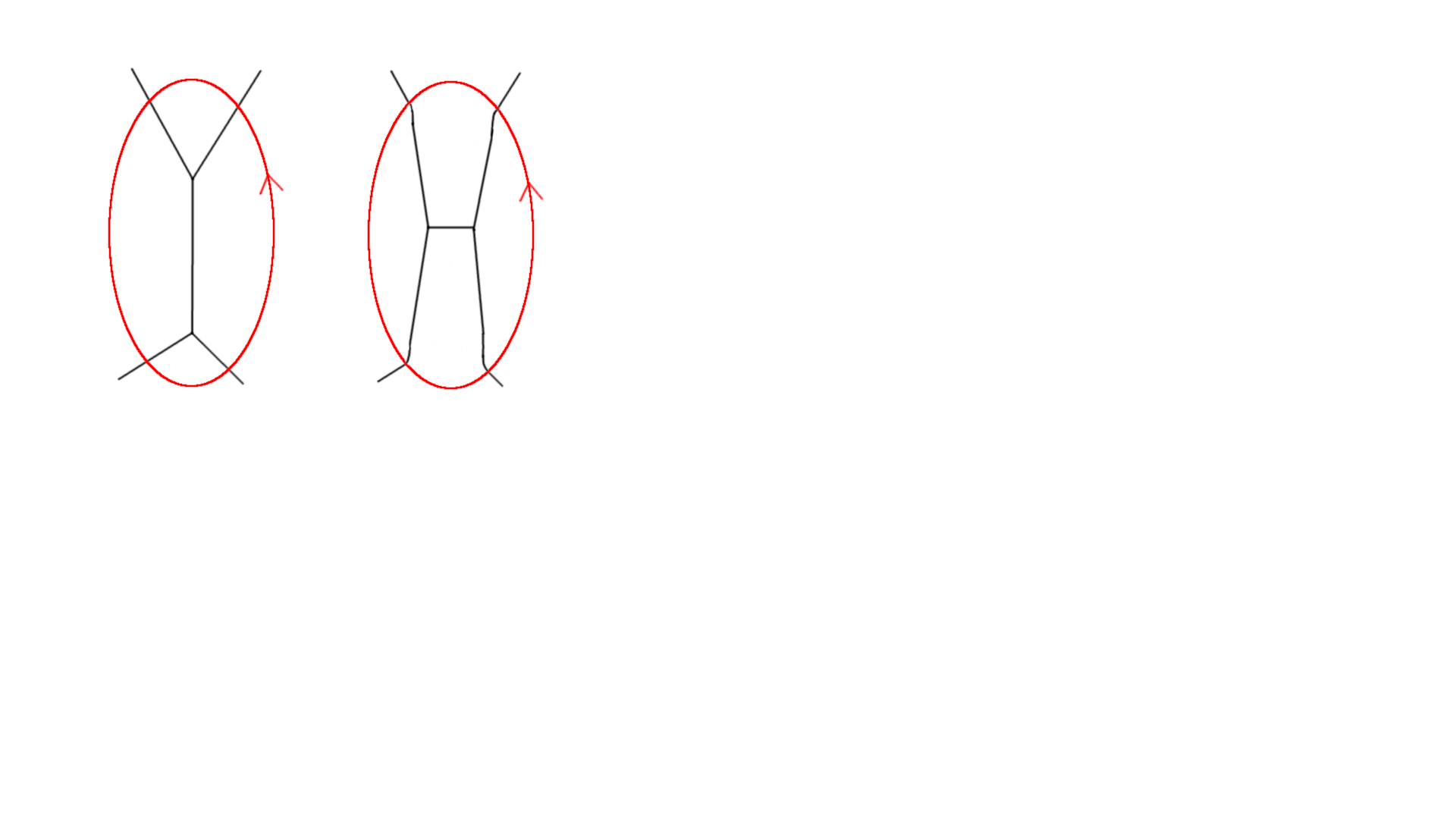}}

    \end{picture}
    \caption{Neighborhoods of the edges $e$ and $\tilde{e}$. Indicated in red is the curve which has two lifts both of which lift to a neighborhood of the boundary. Note that the curve on the left is the positively oriented lift of $e$ while the curve on the right is the negatively oriented lift of $\tilde{e}=-e$.  }  
    \label{Fig:Dehn-Twist-Annulus}
\end{figure}

\begin{figure}
    \begin{picture}(20,250)
    \put(-170,0){\includegraphics[width=15cm, trim=0cm 0cm 0cm 0cm, clip]{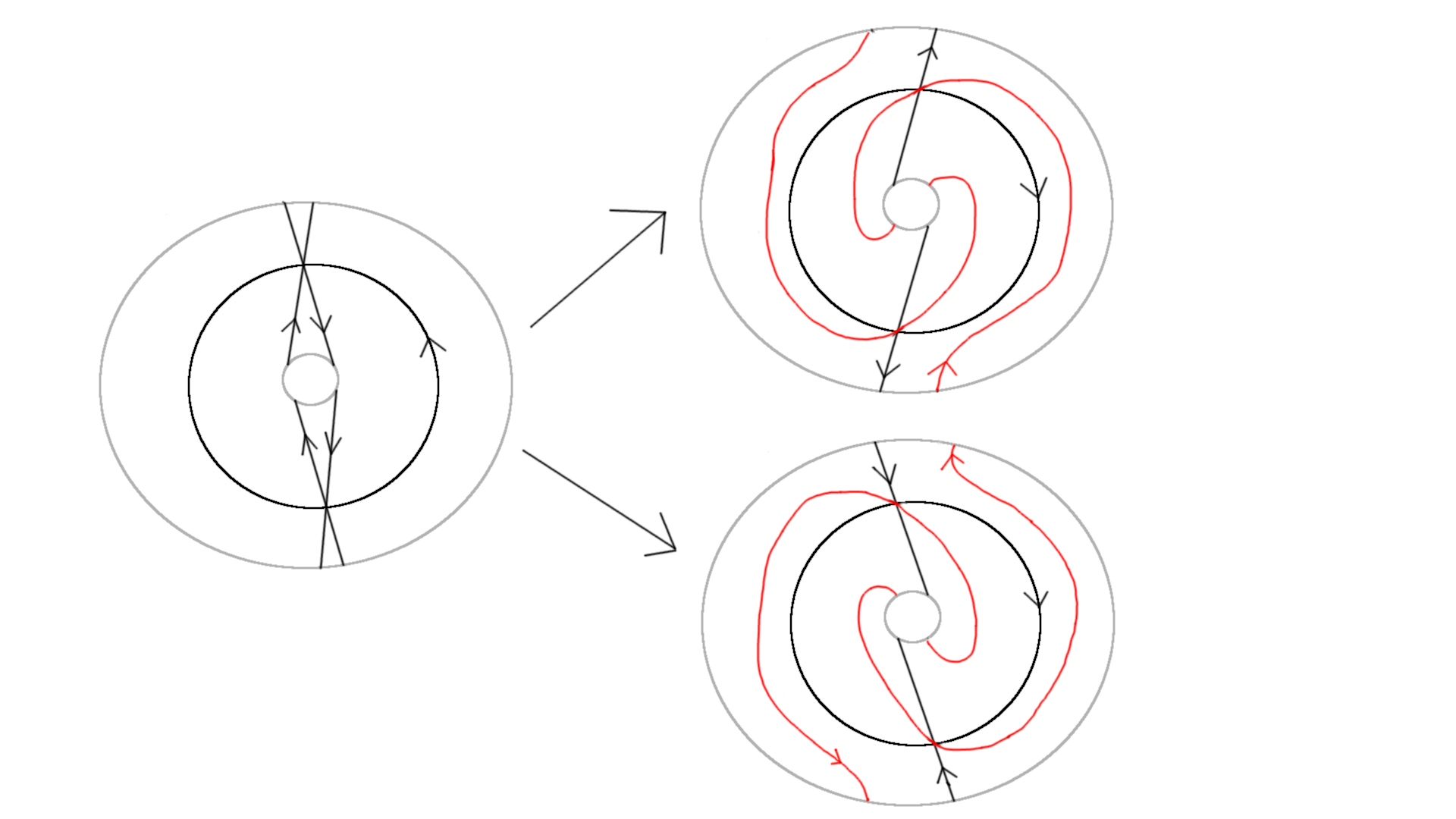}}

    \end{picture}
    \caption{On the left the annulus $A$ from Figure \ref{Fig:Dehn-Twist-Annulus} when viewed on the surface underlying $\Lambda_\Gamma$ and $\Lambda_{\Gamma_e}$. Indicated in black and red are the double lifts of the edges within this neighborhood. On the left pre-surgery and on the right after negative (above), resp. positive (below) mutation. In red are the curves which transform by Dehn twist while in black are the ones which do not.}  
    \label{Fig:Half-Dehn}
\end{figure}

\begin{proof}
    Basically the point is that the transformation is confined to an annulus, and in an annulus the action on relative homology determines the action on the space of simple paths. 

    In more detail, the statement about $[e]$ and $[\tilde{e}]$ is proven in Figure \ref{Fig:Propagate-1}. To observe the other case, let us fix positive mutation and a topological trivialisation of $L_e \# \mathbf{T} \cong \Lambda_\Gamma \times \R$ as in Lemma \ref{Lem: CPG topological filling}. Outside the annulus $A$ indicated in Figure \ref{Fig:Dehn-Twist-Annulus} this trivialisation coincides with the cylindrical structure of $(\Lambda_\Gamma \setminus A) \times \R$ and with $\Lambda_\Gamma \times (-\infty, -R]$ and $\Lambda_{\Gamma_e} \times [R,\infty)$ for $R$ large enough. Outside of the cylinder $A\times \R$ the double lifts of $e_1,\dots,e_4$ and $\tilde{e}_1,\dots \tilde{e}_4$ coincide. 
    
    So, we fix a chart of $A \times \{R\}$ such that the double lifts edges $e_1,\dots,e_4,e$ in $A$ look as on the left of Figure \ref{Fig:Half-Dehn}, compare \cite[Section 4.6]{Treumann-Zaslow}. We extend this chart to $A \times [-R,R]$ (using the trivialisation) and draw the edges $\tilde{e}_1,\dots \tilde{e}_4$ above $A$ in this chart. Then by the condition that the homology classes of $[\tilde{e}_i]$ and $[e_i]$ are related as in Lemma \ref{homology mutation}. There is up to homotopy only the possibility for the $[\tilde{e}_i]$ to transform by Dehn twist around $[e]$ if $i=1,3$ and (up to homotopy) coincide with the $[e_i]$ if $i=2,4$. As the $[\tilde{e}_i]$ are inserted in a single layer $\Sigma \times \{R\}$ they are framed with the blackboard framing and thus they do not link with themselves, so the $[\tilde{e}_i]$ are completely determined. The resulting lifts are depicted in the top right of Figure \ref{Fig:Half-Dehn}. One proves the case of negative mutation similarly and this situation is depicted in Figure \ref{Fig:Half-Dehn} on the bottom right.
\end{proof}

\section{Towards a skein-valued pentagon relation} \label{sec: 5 terms} \label{sec: Full 5-term}

\begin{figure}
   \includegraphics{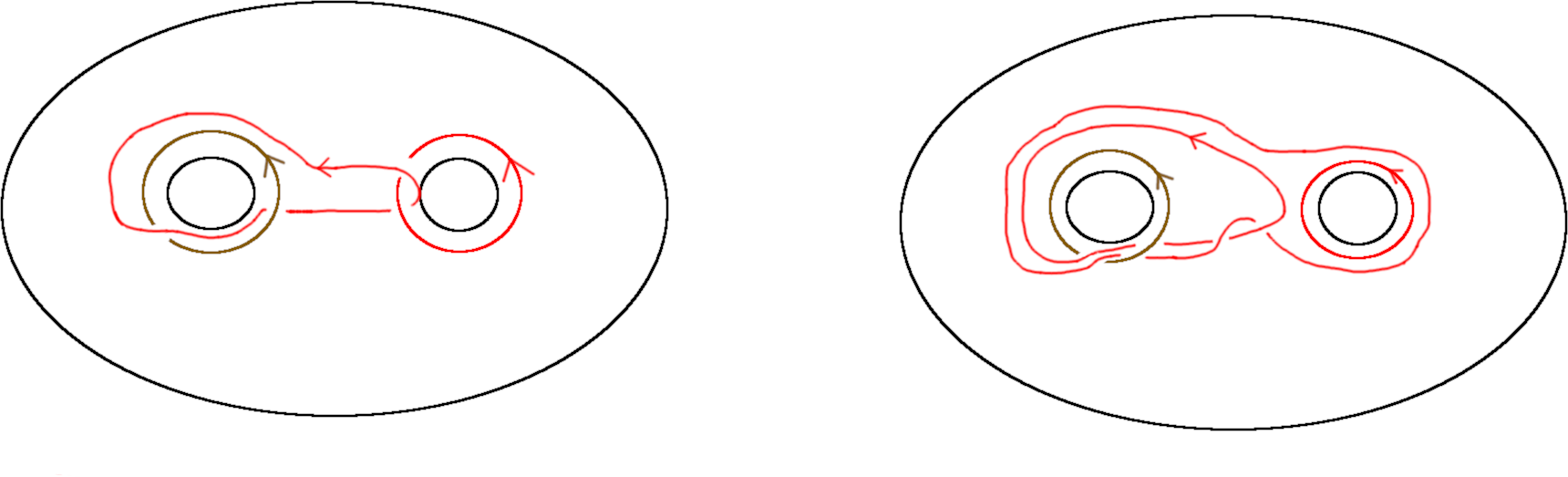}
    \caption{The links of equation \eqref{not five term}.  The $e_0$ terms are drawn in brown.}  
    \label{Fig:Pentagon}
\end{figure}

\begin{figure}
    \begin{picture}(20,600)
    \put(-150,-30){\includegraphics[width=10cm, trim=0cm 5cm 0cm 0cm, clip]{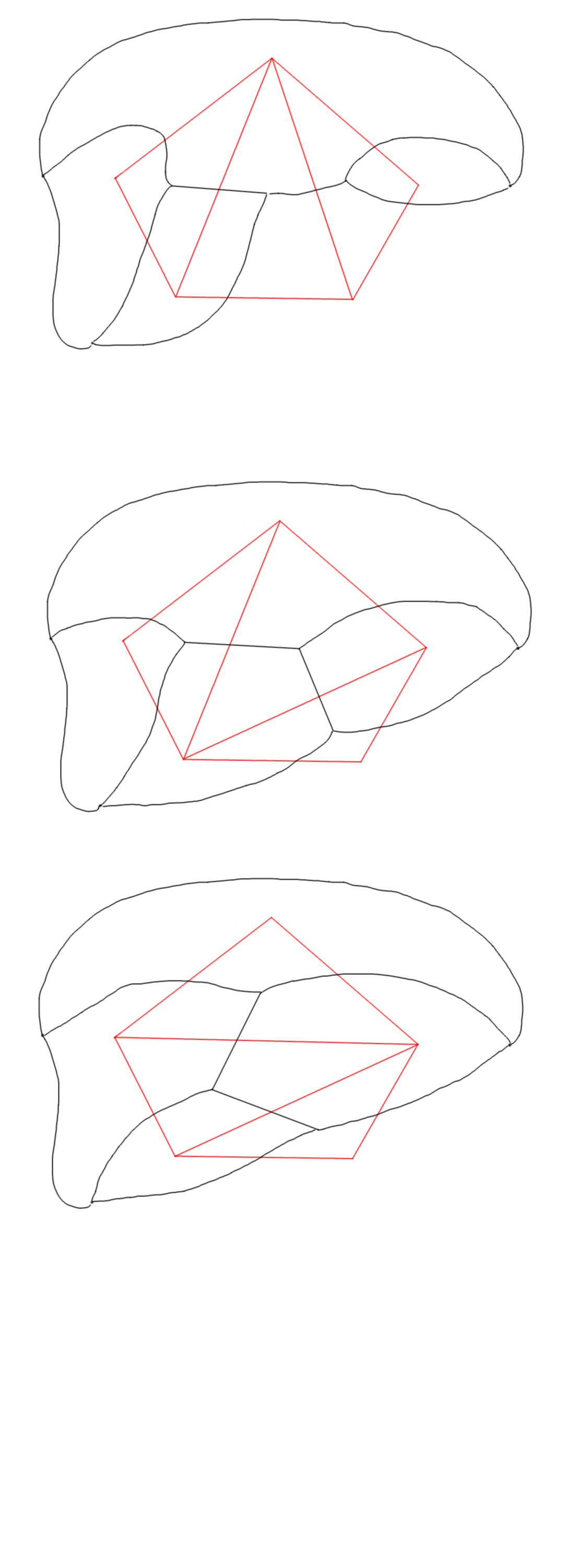}}

    \put(-60,500){$U_1-V_2$}
    \put(0,495){$U_2$}
    \put(50,540){$V_1+V_2$}
    \put(50,490){$-V_1-V_2$}
    \put(-25,600){$-U_1-U_2$}
    \put(-95,550){$-V_1$}
    \put(-165,475){$U_1+V_1$}
    \put(-100,475){$-U_1$}
    \put(-25,470){$V_2$}

    \put(-40,285){$U_1-V_2$}
    \put(20,250){$-U_2$}
    \put(40,305){$V_1+V_2+U_2$}
    \put(60,240){$-V_1-V_2$}
    \put(-25,365){$-U_1-U_2$}
    \put(-110,300){$-V_1$}
    \put(-157,240){$U_1+V_1$}
    \put(-95,240){$-U_1$}
    \put(-20,200){$V_2+U_2$}

    \put(-20,85){$V_2-U_1$}
    \put(-20,50){$-U_2$}
    \put(30,120){$V_1+U_1+U_2$}
    \put(65,45){$-V_1-V_2$}
    \put(-25,165){$-U_1-U_2$}
    \put(-110,110){$-V_1 $}
    \put(-162,20){$U_1+V_1$}
    \put(-93,20){$-V_2$}
    \put(-20,10){$V_2+U_2$}

    \end{picture}
    \caption{At the top the  graph $\Gamma$ obtained after one mutation of the necklace graph.  Middle,  the graph obtained after negatively mutating at the edge of $\Gamma$ labelled $U_2$.  Bottom, the graph obtained by mutating the middle graph at the edge labelled $U_1-V_2$ label.}  
    \label{Fig:5-term-1}
\end{figure}

\begin{figure}
    \begin{picture}(20,600)
    \put(-150,-30){\includegraphics[width=10cm, trim=0cm 6cm 0cm 0cm, clip]{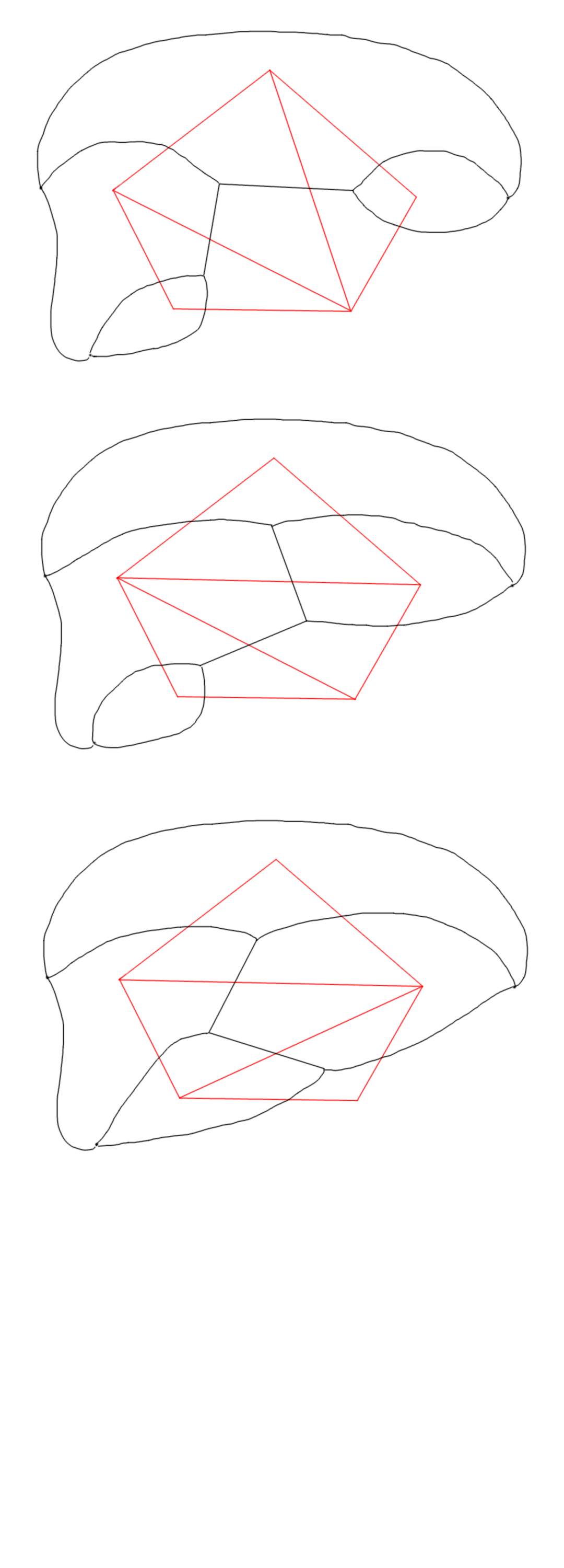}}

    \put(-40,455){$V_2-U_1$}
    \put(-32,485){$U_1+U_2-V_2$}
    \put(50,500){$V_1+V_2$}
    \put(50,448){$-V_1-V_2$}
    \put(-25,560){$-U_1-U_2$}
    \put(-110,505){$-V_1$}
    \put(-165,425){$U_1+V_1$}
    \put(-110,425){$-V_2$}
    \put(-50,400){$V_2$}

    \put(-30,255){$U_2$}
    \put(0,285){$V_2-U_1-U_2$}
    \put(41,314){$V_1+U_1+U_2$}
    \put(50,250){$-V_1-V_2$}
    \put(-25,365){$-U_1-U_2$}
    \put(-110,310){$-V_1$}
    \put(-160,240){$U_1+V_1$}
    \put(-105,240){$-V_2$}
    \put(-50,200){$V_2$}

    \put(-20,85){$V_2-U_1$}
    \put(-20,50){$-U_2$}
    \put(30,120){$V_1+U_1+U_2$}
    \put(65,45){$-V_1-V_2$}
    \put(-25,165){$-U_1-U_2$}
    \put(-110,110){$-V_1 $}
    \put(-162,20){$U_1+V_1$}
    \put(-110,20){$-V_2$}
    \put(-10,8){$V_2+U_2$}

    \end{picture}
    \caption{The other path of mutations. At the top the graph obtained from $\Gamma$ by negatively mutating at the edge labelled by $U_1-V_2$. The middle graph is obtained from negative mutation at the edge labelled $U_1+U_2-V_1$. The bottom graph is obtained by negative mutation at the edge labelled by $U_2$.}  
    \label{Fig:5-term-2}
\end{figure}

Consider the  graph $\Gamma$ described in Figure \ref{Fig:5-term-1}.  It results from a negative mutation of the necklace graph at some edge $e_0$.  Figures \ref{Fig:5-term-1} and \ref{Fig:5-term-2} describe sequences of negative mutations continuing from this graph. 
We denote the mutation edges appearing in the longer sequence as $e_1,e_2,e_3$, and those in the shorter sequence as  $e'_1,e'_2$.  

\begin{lemma} \label{same graphs and homology classes}
    These two sequences of mutations end in the same final graph, with the same final homology classes.
\end{lemma}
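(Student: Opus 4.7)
The plan is direct verification via iterated application of Lemma \ref{homology mutation} along each of the two mutation sequences depicted in Figures \ref{Fig:5-term-1} and \ref{Fig:5-term-2}.

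First I would fix the ambient homology basis $(U_1, U_2, V_1, V_2)$ on $H_1(\Lambda_\Gamma)$ inherited from the standard necklace basis of Figure \ref{Fig:Necklace-basis} via the initial negative mutation at $e_0$. Applying Lemma \ref{homology mutation} once confirms that the labels shown in the top panel of Figure \ref{Fig:5-term-1} are indeed the correct homology classes for the edges of $\Gamma$; this pins down the convention used in reading the remaining panels.

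Second, I would walk down each sequence one mutation at a time. For each step, the negative-mutation case of Lemma \ref{homology mutation} gives a completely explicit prescription once the cyclic orientation at the two endpoints of the mutated edge is specified: the label of the mutated edge flips sign, the two adjacent edges occupying the positions $i \in \{2,4\}$ acquire an additional summand $+e$, and the remaining two adjacent edges keep their labels. The cyclic data needed to decide which adjacent edges play the role of the $\{2,4\}$ positions is read directly off the planar embedding depicted. Carrying this out for the mutations at $e_1, e_2, e_3$ in the longer sequence, and independently for the mutations at $e'_1, e'_2$ in the shorter sequence, reproduces the homology labels in the bottom panels of Figures \ref{Fig:5-term-1} and \ref{Fig:5-term-2} respectively.

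Finally, one compares the two outputs. Under the evident identification of edges between the terminal graphs of the two sequences, both are the same planar cubic graph, and every edge homology label on one side matches the corresponding label on the other. The main (indeed, only) obstacle is keeping the cyclic conventions consistent across all five mutations; beyond this bookkeeping, the verification is mechanical given Lemma \ref{homology mutation}.
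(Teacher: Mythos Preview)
Your proposal is correct and matches the paper's approach exactly: the paper's own proof consists of the single sentence ``The calculation is done in the Figures \ref{Fig:5-term-1} and \ref{Fig:5-term-2},'' and what you describe is precisely the computation those figures record, namely iterated application of Lemma \ref{homology mutation} along each mutation sequence followed by comparison of the terminal labelled graphs.
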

\begin{proof}
    The calculation is done in the Figures \ref{Fig:5-term-1} and \ref{Fig:5-term-2} .
\end{proof}

By Lemma \ref{check homology only}, 
we conclude
\begin{equation} \label{not five term}
  \mathbf{E}([e_3])\mathbf{E}([e_2])\mathbf{E}([e_1])\mathbf{E}([e_0]) =  \mathbf{E}([e'_2])\mathbf{E}([e'_1])\mathbf{E}([e_0]) \in \widehat{Sk}(L_2)  
  \end{equation}
We draw the corresponding knots in Figure \ref{Fig:Pentagon}.

As it appears on the right, we cannot directly cancel the $\mathbf{E}([e_0])$
to obtain the more natural analogue of the pentagon identity
\begin{equation}\label{skeinpentagon}
    \mathbf{E}([e_3])\mathbf{E}([e_2])\mathbf{E}([e_1]) \stackrel{?}{=}  \mathbf{E}([e'_2]) \mathbf{E}([e'_1])
\end{equation}

\begin{remark}
After the appearance of this article as a preprint, Equation \eqref{skeinpentagon} was proven by \cite{Nakamura} and \cite{Hu},
and generalized in \cite{Scharitzer-An}. 
\end{remark}

\appendix

\section{$4$-chains for the disk surgery cobordisms}

In this Appendix, we explain how to fix various framing factors left ambiguous in the body of the paper by appropriate explicit choices of 4-chain on the cobordisms. 

\subsection{$4$-chain for the cobordism of Construction \ref{Con:Basic cobordism}}

\begin{definition} \label{4-chain triviality}
    Let $L \subset J^1(M) \times \R \subset Y \times \R$ be a disk surgery cobordism between $\Lambda$ and $\Lambda_D$ which coincides with a trivial cobordism outside the cylinder above some ball $B^5 \subset J^1(M)$. Let $K$ be a $4$-chain bounding $2L$ as described at the beginning of Section \ref{what perfect cobordisms do}. We say that $K$ induces trivial framing lines, if it fulfills the following properties:
    \begin{enumerate}
        \item $K^\circ \cap L \cap (B^5 \times \R)^c = \mathfrak{p} \times \R$ for some set of oriented points $\mathfrak{p} \subset \Lambda$;
        \item $\mathfrak{q}= K^\circ \cap L \cap (B^5 \times \R)$ is the boundary of an oriented $2$-chain inside $L$;
        \item $K$ does not intersect any Reeb trajectory of $\Lambda \subset J^1(M)$.
    \end{enumerate}
\end{definition}

\begin{construction} \label{Con: 4-chain of Basic cobordism}
    Let $\Lambda$ be a Legendrian in $J^1(M) \subset Y $ and $D$ a surgery locus such that $\Lambda_D$ is Reeb-compatible with $\Lambda$. Assume furthermore, that all Reeb chords of $\Lambda \subset J^1(M)$ are pairwise disjoint. Let $L$ be the disk surgery cobordism from Construction \ref{Con:Basic cobordism}. Then there is a $4$-chain $K$ with boundary $2L \sqcup -2L'$ where $L$ and $L'$ are isotopic, $L \cap L' = \emptyset$ and the framing lines induced by the intersection $K^\circ \cap L$ fulfills the properties of Definition \ref{4-chain triviality}.
\end{construction}

\begin{proof}
    First, we recall that for a trivial cobordism $\Lambda \times \R \subset J^1(M) \times \R$ there is a $4$-chain $K' \times \R$ \cite[Proof of Construction 5.3]{Scharitzer-Shende}, fulfilling the following properties:

    \begin{enumerate}
        \item $K' \cap J^1(M)$ is arbitrarily close to the positive and negative Reeb push-offs of $L$ after some small time.
        \item The intersections $K'^\circ \cap \Lambda$ are in $1:1$ correspondence with the Reeb chord endpoints and lie arbitrarily close to them.
        \item $K'^\circ$ is disjoint from the Reeb chords of $\Lambda$.
        \item The vectorfield $v$ in the neighborhood $\Lambda \times \R$ is close to the Reeb vectorfield which is dual to $\frac{d}{dt}$, the vectorfield of the $\R$ direction. 
    \end{enumerate}

    So, we only need to discuss how to extend this $4$-chain into the non-cylindrical pieces of $L$. In Construction \ref{Con:Basic cobordism} there appear two different types of non-cylindrical pieces. One are the ambient surgeries and the other one are the Lagrangian cobordisms coming from Legendrian isotopies. In particular, we need to show that all pieces of the framing lines $1$-chain $\mathfrak{p}$ appearing here are homologically trivial. By construction, there are $4$ pieces of the framing lines before the surgeries ambient surgeries, see the left of Figure \ref{Fig:Surgery-1}. We arbitrarily extend the $4$-chain into the handle of the index $1$ surgery such that outside a small ball $B_1$ around the handle it again agrees with the standard model. The topology of the handle joins two $D^3$ disks with one another, so relative to the boundary any components of $\mathfrak{p}$ appearing there are homologically trivial and we know by the right of Figure \ref{Fig:Surgery-1} that only two additional components of the framing line leave the boundary of $(B_1 \times \R) \cap L$. We proceed in the same way around the index $0$ surgery. Here there is a small ball $B_0$ such that $(B_0 \times \R )\cap L$ topologically coincides with an annulus with a $2$-handle attached along its central ring. So again, all different ways to extend the $4$-chain into the interior lead to homologically trivial components of $\mathfrak{p}$. Again, as can be seen on the left of Figure \ref{Fig:Surgery-2} two additional components of the framing lines appear. Finally, the last piece of the cobordism is a Lagrangian cobordism coming from Legendrian isotopy, as described in Construction \ref{Con:Basic cobordism} this cobordism can be chosen arbitrarily close to the trivial cobordism. So we may, extend the $4$-chain in such a way that the framing lines follow the paths traced out by the Reeb chord endpoints arbitrarily close. We see that the index $2$ Reeb chord stays within a ball $B_1'$ which is slightly larger than $B_1$ but $(B_1' \times \R )\cap L$ is topologically the same as $B_1 \cap L$. As seen on the right of Figure \ref{Fig:Surgery-2} the framing lines cancel each other. Since  $(B_1' \times \R)\cap L$ is topologically a ball $B_1'$ all framing lines are bounded by a $2$-chain relative to the boundary. Since both endpoints live in the same connected component of $((B_1' \times \R)\cap L)_{-\infty}$ we can choose a bounding $2$-chain which lives entirely in $((B_1' \times \R)\cap L)_{-\infty}$. A similar argument shows that the remaining components of the framing lines are also bounded by a $2$-chain relative to the negative end of $L$.
\end{proof}

\begin{lemma}
    Let $K$ be the $4$-chain from Construction \ref{4-chain triviality}. If $2L'$ bounds a $4$-chain in the complement of $L \sqcup \bigsqcup C_\rho \subset Y \times \R$ where $C_\rho$ is the cylinder above a Reeb chord $\rho$ of $\Lambda$. Then the equation from Theorem \ref{cluster transformation from perfect cobordism} specialises to:
    $$\mathbf{A}(\Lambda_D,\rho_+) \mathbf{E}(\partial D) = \mathbf{E}(\partial D)  \mathbf{A}(\Lambda,\rho_-)$$
    which is an equation in the appropriate completion of the skein-module $Sk(\Lambda \times \R, \mathfrak{p} \times \R)$ where $\mathfrak{p}$ is the set of oriented endpoints of Reeb chords.
\end{lemma}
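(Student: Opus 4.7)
The plan is to reduce the lemma to two essentially independent tasks: (i) reorganising the $4$-chain $K$ from Construction \ref{Con: 4-chain of Basic cobordism} into one bounding only $2L$, so that it genuinely controls the sign/linking lines on $L$ alone; and (ii) verifying that with this choice of framing data the trivial-strip weight $\Psi_{Y \times \R, L}(\rho_+, \rho_-)$ equals $1$, so that the factor $\gamma$ appearing in Theorem \ref{cluster transformation from perfect cobordism} disappears.

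For (i), I will use the hypothesis directly: pick a $4$-chain $K''$ with $\partial K'' = -2L'$ supported inside the complement of $L \sqcup \bigsqcup_\rho C_\rho$, and form $\widetilde K := K + K''$. Then $\partial \widetilde K = 2L$, and since $K''$ is disjoint from $L$ and from every Reeb-chord cylinder $C_\rho$, the induced framing-line data $\widetilde K^\circ \cap L$ coincides with $K^\circ \cap L$ and $\widetilde K$ is still disjoint from every Reeb trajectory of $\Lambda$. In particular $\widetilde K$ still satisfies the three triviality conditions of Definition \ref{4-chain triviality}: outside the surgery region the framing lines are the cylindrical $\mathfrak{p} \times \R$, and the remaining piece $\mathfrak{q}$ inside the surgery region bounds an oriented $2$-chain in $L$. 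The breaking analysis of Lemma \ref{Lem:General-breaking} therefore proceeds verbatim with $\widetilde K$ as the background $4$-chain.

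For (ii), recall that $\gamma$ arises as the contribution of the unique topologically trivial strip $S$ joining $\rho_+$ to $\rho_-$ guaranteed by perfectness; its skein-valued weight is of the form $(-a)^{\langle \partial S,\, \widetilde K^\circ \cap L \rangle}$. Outside the surgery region, where the framing lines are $\mathfrak{p} \times \R$, the intersections of $\partial S$ with the framing lines are concentrated at the Reeb endpoints of $\rho_\pm$, and the standard orientation conventions force the contributions at the top and bottom of $S$ to cancel. Inside the surgery region, $\mathfrak{q}$ bounds a $2$-chain in $L$, and $\partial S$ is (by perfectness of the cobordism) homotopic in $L$ to a cylinder over a Reeb chord endpoint, so its algebraic intersection with $\mathfrak{q}$ vanishes. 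Hence $\gamma = 1$, and the equation of Theorem \ref{cluster transformation from perfect cobordism} specialises to the asserted identity. Because the framing lines at $\pm\infty$ of the symplectisation are both cylindrical of the form $\mathfrak{p} \times \R$, the resulting identity naturally lives in $Sk(\Lambda \times \R,\, \mathfrak{p} \times \R)$.

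The main obstacle I anticipate is the sign/orientation bookkeeping in (ii): verifying that the algebraic intersection of $\partial S$ with the bounding $2$-chain of $\mathfrak{q}$ really vanishes and that the cancellation at the Reeb endpoints occurs with the correct signs. Both facts depend on the explicit local models from Construction \ref{Con:Basic cobordism} and Construction \ref{Con: 4-chain of Basic cobordism}, which is precisely why the triviality conditions of Definition \ref{4-chain triviality} were engineered in exactly that form.
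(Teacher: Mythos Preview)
Your step (i) is correct and matches the paper exactly: one closes off the extra boundary $-2L'$ by a $4$-chain supported in the complement of $L$ and the Reeb cylinders, so the induced sign lines on $L$ are unchanged and still satisfy Definition \ref{4-chain triviality}.

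Your step (ii), however, is both overcomplicated and incomplete. The trivial strip $S$ sits above a Reeb chord of $\Lambda$, hence entirely outside the surgery ball $B^5 \times \R$; thus $\partial S$ can only meet the cylindrical sign lines $\mathfrak{p} \times \R$, not $\mathfrak{q}$. Since $\partial S$ is itself a pair of vertical segments (at the Reeb chord endpoints, near but not on $\mathfrak{p}$), there are simply no crossings, and no ``top and bottom cancellation'' or intersection of $\partial S$ with a $2$-chain bounding $\mathfrak{q}$ is needed.

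The real remaining issue --- which you do not address --- is that the identity, as produced by Theorem \ref{cluster transformation from perfect cobordism}, lives in the skein of $L$ with sign lines $(\mathfrak{p} \times \R) \cup \mathfrak{q}$, and one must still pass to $Sk(\Lambda \times \R, \mathfrak{p} \times \R)$. This is where the paper uses property (2) of Definition \ref{4-chain triviality}: the $2$-chain bounding $\mathfrak{q}$ lets one cancel those sign lines. But $\partial D$ lies in the surgery region and may cross this $2$-chain, so removing $\mathfrak{q}$ can alter the effective framing of $\partial D$ --- equivalently, the scalar $\gamma$ in $(\bigcirc - m - \gamma \ell)\mathbf{E} = 0$. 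The paper absorbs this into the framing choice for $\partial D$ rather than claiming the scalar is $1$ on the nose. Your final sentence about the identity ``naturally living'' in $Sk(\Lambda \times \R, \mathfrak{p} \times \R)$ glosses over exactly this step.
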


\begin{proof}
    The $4$-chain from Construction \ref{Con: 4-chain of Basic cobordism} is disjoint from all cylinders above Reeb chords of $\Lambda$, so if we can close the remaining boundary of the $4$-chain in the complement of $L$ and these cylinders then we do not introduce new sign/framing lines from intersections with $L$ nor change the framing of the trivial cylinders. Finally, as the framing lines above the surgery locus are homologically trivial, we may choose some $2$-chain to cancel them. Note, that in this way, we may change the $\gamma$ appearing in the equation $(1-m-\gamma l)\mathbf{E}=0$, however this is equivalent to a change of framing of $\partial D$.
\end{proof}

\subsection{4-chain for the cobordism of Construction \ref{CPG: Cobordism}}
\begin{construction} \label{Con: 4-chain of CPG corbordism}
    Let $L$ be the disk surgery cobordism from Construction \ref{CPG: Cobordism}. Then there is a $4$-chain $K$ bounding $2L$ which induces trivial framing lines. 
\end{construction}

\begin{proof}
    We proceed exactly, as in the proof of Construction \ref{Con: 4-chain of Basic cobordism}. We only need to modify the argument that the framing and sign lines $K^\circ \cap L$ inside the surgery locus $B^5 \times \R$ are bounded by a $2$-chain. As in Construction \ref{Con: 4-chain of Basic cobordism}, we will not argue about the framing and sign lines close to surgery handles since all these choices are contractible relative to the boundary of a small ball containing the handle. In fact, we will only focus on the paths traced out by Reeb chord endpoints.

    Recall that the first stage of Construction \ref{CPG: Cobordism} was an ambient index $1$ surgery followed by a Lagrangian cobordism coming from a Legendrian isotopy. Consider Figure \ref{Fig:CPG-Cobord-1} and take $B \subset S^5$ to be a ball containing some vertex $v$ adjacent to $e$ and the complement of one of the triangles of $T_{Cl}$. Then the Legendrian isotopy can be chosen so that none of the Reeb chords outside this triangle pass through it. On the right of Figure \ref{Fig:CPG-Cobord-2}, we observe that all Reeb chords save the one inside the chosen triangle have cancelled with one another. As all Reeb chords outside the triangle have cancelled with one another and the intersection $B \cap L$ is topologically a ball, we again see that there is a $2$-chain bounding the framing lines relative to the boundary of this ball. Next, we turn to the second and third stage of Construction \ref{CPG: Cobordism}. Here we consider the following subset: Consider the triangle on the right of Figure \ref{Fig:CPG-Cobord-2}. Then let $B_f$ be a ball containing the Reeb chord endpoints and the upper vertex of the triangular face. Let $B_f \times (a,b)$ be an appropriate extension of this set into the previous stages so that $B_f \times (a,b)$ is topologically a ball. Recall that $B_f$ is indeed a disk as it is the branched double cover of a disk with a single branching point. Furthermore, let $B'_e$ be a neighborhood of the edge $e$ on the right of Figure \ref{Fig:CPG-Cobord-2} and let $B'_e \times [b,c]$ be an appropriate extension so that it contains all framing lines. Topologically $B'_e \cap L$ is an annulus but $B'_e \times [b,c] \cap L$ is an annulus with a $2$-handle attached, so again it is topologically a 3-ball. Taking the union with $B_f \times (a,b) \cap L$ doesn't change the topology as the sets can be chosen such that the intersection $B_e \times B_f$ is a disk. So finally, again $\mathfrak{p}$ inside this $3$-ball is bounded by a $2$-chain relative to the boundary of this $3$-ball. All endpoints live in the same connected component of $L_{-\infty}$, so in fact these $2$-chains can be chosen relative to $L_{-\infty}$.

    Finally, we  argue that the boundary component $2L'$ is bounded by a $4$-chain $K'$ which intersects neither $L$ nor any of the cylinders above the Reeb chords of $\Lambda$. To do so, take a large ball $B^5 \subset S^5$ which contains all slices $L \cap S^5 \times \{d\}$ for $d$ arbitrary. Then $(S^5 \setminus B^5) \times \R$ is contractible and contains $2L'$. Thus we can find a $4$-chain $K'$ which doesn't contribute new sign lines or intersections to the curves bounding $L$. Now $K + K'$ is the desired $4$-chain.
\end{proof}

\begin{proposition} (Fixing the framing of Theorem \ref{cubic planar graph cluster}) \label{Fixed framing CPG cluster}
    Let $\Gamma$ be a cubic planar graph Legendrian and $e$ a non-loop edge. We write $e^+$ for the corresponding oriented simple closed curve and $e^-$ for the curve with opposite orientation. Then for each face $f$ of $\Gamma$, we have the following equations, holding, respectively in completions of the skein in the directions of $e_+$ and $e_-$: 

        $$ \mathbf{A}(\Lambda_{\Gamma_e},f) \mathbf{E}(e_+) = 
    \mathbf{E}(e_+)  \mathbf{A}(\Lambda_\Gamma,f) $$
    $$ \mathbf{A}(\Lambda_{\Gamma_e},f) \mathbf{E}(e_-) = 
    \mathbf{E}(e_-)  \mathbf{A}(\Lambda_{\Gamma},f) $$

    where both $e_+$ and $e_-$ are given the blackboard framing and $\mathbf{E}(e_+)$ is the skein-valued lift of the $q$-dilogarithm with scaling $\gamma=q^\frac{1}{2}$ where $\mathbf{E}(e_-)$ has the scalar $\gamma=-q^\frac{1}{2}$, see Section \ref{skein dilogarithm}. 
\end{proposition}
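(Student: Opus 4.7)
The plan is to upgrade Theorem \ref{cubic planar graph cluster} by making explicit the scalar $\gamma$ which appeared there as an unspecified signed monomial in the framing variable. To do so, I would first apply Construction \ref{Con: 4-chain of CPG corbordism} to equip the disk-surgery cobordism of Construction \ref{CPG: Cobordism} with a 4-chain $K$ whose intersection with $L$ induces trivial framing lines in the sense of Definition \ref{4-chain triviality}. This forces all of the framing/sign lines outside a fixed ball $B^5 \times \R$ to be of the form $\mathfrak{p} \times \R$, where $\mathfrak{p}$ consists only of the endpoints of the index-1 Reeb chords, and ensures that the framing lines in the surgery region bound a 2-chain inside $L$. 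Consequently, when we close up the 4-chain using the fact that $2L'$ bounds in the complement of $L$ together with the Reeb cylinders, no stray factors of $a$ are introduced at the trivial strips connecting the two ends of the cobordism.

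With this 4-chain in place, I would re-run the proof of Theorem \ref{cluster transformation from perfect cobordism}. Perfectness of the cobordism (Theorem \ref{CPG: Curve Count}) gives
\begin{equation*}
\mathbf{A}(\Lambda_{\Gamma_e},f) \;=\; \mathbf{A}(\Lambda_\Gamma,f) \;+\; \sum_{\nu \in R_1(\T_{Cl})} \Psi(f,\nu)\,\mathbf{A}(\T_{Cl},\nu),
\end{equation*}
where the contribution of the trivial strip is now exactly $1$ because the 4-chain induces trivial framing lines along it. Gluing in the topological Harvey-Lawson filling and using $\mathbf{A}(\T_{Cl},\nu)\mathbf{E}^\gamma(\partial D) = 0$ then yields the identity
\begin{equation*}
\mathbf{A}(\Lambda_{\Gamma_e},f)\,\mathbf{E}^\gamma(\partial D) \;=\; \mathbf{E}^\gamma(\partial D)\,\mathbf{A}(\Lambda_\Gamma,f),
\end{equation*}
where $\partial D$ is now framed by the blackboard framing inherited from the cylindrical structure, and the scalar $\gamma$ is completely determined by (i) the choice of positive or negative Harvey-Lawson filling in Lemma \ref{Lem: CPG topological filling}, and (ii) the induced orientation of $\partial D$.

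Finally, I would compute $\gamma$ in both cases. Using the tetrahedron presentation of $\T_{Cl}$, the Clifford torus operator takes the normal form $\bigcirc - m + a\ell$ as in Equation \eqref{clifford torus operator} once the 4-chain is set up as in Construction \ref{Con: 4-chain of Basic cobordism}, so the solution $\mathbf{E}^\gamma$ relevant to our cobordism satisfies $(\bigcirc - m - \gamma\ell)\mathbf{E}^\gamma = 0$ with $\gamma$ recording the comparison between the intrinsic longitude in Lemma \ref{unique solution for clifford} and the framed oriented curve $e_\pm$ on $\Lambda_\Gamma$ determined by Lemma \ref{Lem: CPG topological filling}. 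For $e_+$, the positive filling contracts the edge $e_T$ of Lemma \ref{fate of e} with pairing $+1$ against $\partial D$, and tracking the resulting framing shift together with the $z = q^{1/2} - q^{-1/2}$ normalisation yields $\gamma = q^{1/2}$; reversing orientation to $e_-$ changes the pairing to $-1$ and the crossing conventions introduce the expected sign, giving $\gamma = -q^{1/2}$.

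The main technical obstacle is the last step: pinning down the exact scalar $\gamma$ rather than merely a monomial in $a$. This requires a careful local analysis of the 4-chain near the Clifford torus in the tetrahedron presentation, to verify that the intersection $K^\circ \cap L$ contributes trivially to the scaling and that the only surviving scalar is the framing factor of $\partial D$ under the blackboard convention. Once this normalisation is checked, the two stated equations follow immediately from the identity above applied to each of the two standard topological trivializations provided by Lemma \ref{Lem: CPG topological filling}.
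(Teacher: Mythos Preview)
Your overall architecture matches the paper: use Construction \ref{Con: 4-chain of CPG corbordism} to produce a 4-chain with trivial framing lines, so that re-running Theorem \ref{cluster transformation from perfect cobordism} fixes the contribution of the trivial strip to be exactly $1$, and then the only remaining task is to determine the scalar $\gamma$ appearing in $\mathbf{E}^\gamma$. Up to this point you and the paper agree.

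The divergence is in how $\gamma$ is actually pinned down. You propose to extract it from a local geometric analysis of the 4-chain near the tetrahedron Clifford torus, tracking how the intrinsic longitude compares to the blackboard-framed $e_\pm$ and invoking the pairing from Lemma \ref{Lem: CPG topological filling}. You yourself flag this as the main technical obstacle, and indeed your proposal does not carry it out; the sentence ``tracking the resulting framing shift together with the $z = q^{1/2}-q^{-1/2}$ normalisation yields $\gamma = q^{1/2}$'' is an assertion, not a computation. Getting this right geometrically would require controlling exactly how the 4-chain meets the filled solid torus and how the sign/framing lines interact with the insertion of $\mathbf{E}$, which is delicate.

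The paper takes a different and more robust route: it computes $\gamma$ \emph{algebraically} by comparing both sides of the identity against the explicit formula $\mathbf{A}(\Lambda_\Gamma,f)=\sum_{v\in f}[l_v]$ from \cite[Theorem 7.1]{Scharitzer-Shende}. Choosing a face $f$ adjacent to $e$, all $[l_v]$ with $v$ not a vertex of $e$ commute past $\mathbf{E}(e_{+,\mathfrak f})$, reducing the identity to one involving only $[l_{v_1}],[l_{v_2}]$ and the mutated vertex tangle $[l_v]$. Expanding in the relative homology filtration and isolating the degree-one term, the paper uses the known first-order expansion of $\mathbf{E}$ from \cite{unknot} and the HOMFLYPT skein relation to see that the equation becomes exactly the statement that $[l_{v_2}]$ is the Dehn twist of $[l_{v_1}]$ along $e_+$ (cf.\ Lemma \ref{dehn twist}). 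This forces the blackboard framing on $e_+$ and fixes $\gamma=q^{1/2}$; the $e_-$ case is analogous and gives $\gamma=-q^{1/2}$.

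In short: your framework is correct, but the step you identify as hardest is left as a promise. The paper avoids the local 4-chain analysis entirely by leveraging the explicit description of $\mathbf{A}(\Lambda_\Gamma,f)$, which turns the determination of $\gamma$ into a one-line skein computation.
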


\begin{proof}
    For definiteness, we will show the equation for $e_+$. The equation for $e_-$ is shown similarly. Since the equation is true for all face equations of the graph. By Construction \ref{Con: 4-chain of CPG corbordism}, we can choose the framing factor of Theorem \ref{cluster transformation from perfect cobordism} to be $\gamma=1$. It is sufficient to check what the framing of $e_+$ is for one of the equations. Let $f$ be a face such that $e$ is a boundary of $f$ in $\Gamma$. Then we obtain the following two equations from \cite[Theorem 7.1.]{Scharitzer-Shende}:

    \begin{eqnarray}
        \mathbf{A}(\Lambda_{\Gamma_e},f) &=& \sum_{v \in f \in \Gamma_e} [l_v] \\
        \mathbf{A}(\Lambda_\Gamma,f) &=& \sum_{v\in f \in \Gamma} [l_v]
    \end{eqnarray}
    The difference between here and the cited Theorem, only lies in the fact that we have not chosen capping paths. So the tangles $l_v$ are the double cover of a path from a chosen midpoint of $f$ to the vertex $v$ and where $e_{+,\mathfrak{f}}$ is the same curve as $e_+$ but the framing $\mathfrak{f}$. We observe directly, that all such tangles commute with the element $\mathbf{E}(e_{+,\mathfrak{f}})$ if $v$ is not one of the vertices adjacent to the edge $e$. So we may reduce the equation:

    $$ \mathbf{A}(\Lambda_{\Gamma_e}) \mathbf{E}(e_{+,\mathfrak{f}}) = 
    \mathbf{E}(e_{+,\mathfrak{f}})  \mathbf{A}(\Lambda_\Gamma) $$

    To:

    $$ ([l_{v}]) \mathbf{E}(e_{+,\mathfrak{f}}) = 
    \mathbf{E}(e_{+,\mathfrak{f}})  ([l_{v_1}] + [l_{v_2}]))$$

    For convenience, we also multiply by the inverse of $\mathbf{E}(e_{+,\mathfrak{f}})$ from the left:

    $$ \mathbf{E}(e_{+,\mathfrak{f}})^{-1} ([l_{v}]) \mathbf{E}(e_{+,\mathfrak{f}}) = ([l_{v_1}] + [l_{v_2}]))$$

    where $v_1,v_2$ are the vertices bounding e such that $v_2$ follows $v_1$ in clockwise order and $v$ is the unique vertex bounding $f$ after mutation. Choosing the trivialisation given in Lemma \ref{dehn twist}, we can identify the tangles $[l_v]$ and $[l_{v_1}]$ with one another. So, these represent the same homology class $\delta$ relative to the midpoints of faces. Whereas the underlying homology class of $l_{v_2}$ is $\delta+\epsilon$ where $\epsilon$ is the underlying homology class of $e_+$. In this relative homology filtration, we can split the equation into equations for each $\delta + k \epsilon$ where $k$ is some non-negative integer. We focus on the equation on degree $k=1$.

    $$ -\frac{\gamma}{q^{\frac{1}{2}}-q^{-\frac{1}{2}}} [e_{+,\mathfrak{f}}] [l_{v}]  +  \frac{\gamma}{q^{\frac{1}{2}}-q^{-\frac{1}{2}}} [l_{v}] [e_{+,\mathfrak{f}}] = [l_{v_2}]$$

    where we refer to \cite[Proposition 3.2.]{unknot} for the expansion of the first terms of the skein-valued quantum dilogarithm. \footnote{Note that their lift is shifted by $q^\frac{1}{2}$ relative to our convention.} Once we substitute $[l_v]=[l_{v_1}]$ and multiply by $q^{\frac{1}{2}}-q^{-\frac{1}{2}}$,we obtain from the HOMFLY-identities exactly the statement that $l_{v_2}$ is obtained from $l_{v_1}$ by Dehn twisting along $e_+$. All of these tangles are given the blackboard framing thus, for the above equation to be true, $\mathfrak{f}$ must be the blackboard framing along $e_+$.
\end{proof}

Similarly, we can formulate the above equations with capping paths:

\begin{corollary}
    Let $\Gamma$ be a cubic planar graph Legendrian and $\lambda$ a $1$-form. Let $(e_1,\dots,e_n)$ and $\epsilon_1,\dots,\epsilon_n)$ be the edges and signs defining a composable sequence of mutations beginning at $\Gamma$. Assume furthermore that $v$ is a vertex of a face $f$ which is not the vertex of any of the $e_i$. Then 

    $$ \mathbf{A}(\Lambda_{\Gamma_n},f,v) \Psi = 
    \Psi  \mathbf{A}(\Lambda_\Gamma,f,v) $$

    where $\mathbf{A}(\Lambda_{\Gamma_n},f,v)$ and $\mathbf{A}(\Lambda_\Gamma,f,v)$ are the skein elements defined in \cite[Theorem 7.1.]{Scharitzer-Shende} and $\Psi$ is the product of $\mathbb{E}(e_i^\pm)$ where each $e_i$ is endowed with the blackboard framing and each $\mathbb{E}(e_i^\pm)$ has the appropriate shifting scalar, as in Proposition \ref{Fixed framing CPG cluster}.
\end{corollary}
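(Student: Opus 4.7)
The plan is to prove the corollary by induction on the length $n$ of the mutation sequence, reducing at each step to the single-edge identity of Proposition \ref{Fixed framing CPG cluster} augmented with capping paths at $v$. The inductive step amounts to showing, for each $i$,
\begin{equation*}
\mathbf{A}(\Lambda_{\Gamma_i},f,v)\,\mathbf{E}(e_i^{\epsilon_i}) \;=\; \mathbf{E}(e_i^{\epsilon_i})\,\mathbf{A}(\Lambda_{\Gamma_{i-1}},f,v),
\end{equation*}
so that composing these and unfolding $\Psi = \mathbf{E}(e_n^{\epsilon_n})\cdots \mathbf{E}(e_1^{\epsilon_1})$ yields the corollary.

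For the single-edge equation with capping paths, I would start from the tangle-valued identity already supplied by Proposition \ref{Fixed framing CPG cluster}. To convert each of $\mathbf{A}(\Lambda_{\Gamma_i},f)$ and $\mathbf{A}(\Lambda_{\Gamma_{i-1}},f)$ into its $v$-capped form, one pre- and post-composes the tangle representatives $[l_w]$ with a capping path $\gamma_v$ running on the Legendrian from the midpoint of $f$ to the basepoint $v$. Since by hypothesis $v$ is not a vertex of $e_i$, the path $\gamma_v$ can be chosen to lie in the interior of the face containing $v$, avoiding a tubular neighborhood $N([e_i])$ of the lifted edge in $\Lambda$. The trivialization of the mutation cobordism from Lemma \ref{dehn twist} is supported in a neighborhood of that tubular region, so the same choice of $\gamma_v$ can be used on both $\Lambda_{\Gamma_{i-1}}$ and $\Lambda_{\Gamma_i}$.

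The key observation is then that $\gamma_v$ commutes with $\mathbf{E}(e_i^{\epsilon_i})$ in $\widehat{Sk}(\Lambda \times \R)$: the element $\mathbf{E}(e_i^{\epsilon_i})$ is supported in $N([e_i]) \times \R$, while $\gamma_v$ sits in $(\Lambda\setminus N([e_i])) \times \R$, so their disjoint horizontal supports allow one to be slid past the other along the $\R$-direction without creating any crossings. Sliding $\gamma_v$ across $\mathbf{E}(e_i^{\epsilon_i})$ on both sides of the tangle identity of Proposition \ref{Fixed framing CPG cluster} then yields the desired capped equation; chaining these for $i=1,\dots,n$ completes the induction.

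The main obstacle is purely bookkeeping: one must verify that the capping path $\gamma_v$ and the face $f$ can be chosen consistently along the full sequence of mutations. This reduces to the locality of each mutation cobordism near its surgery disk (Construction \ref{CPG: Cobordism}) together with the hypothesis that $v$ lies outside every $e_i$, so that each identification $\Lambda_{\Gamma_{i-1}} \cong \Lambda_{\Gamma_i}$ fixes an ambient region containing $v$ and an interior slab of the corresponding face along which $\gamma_v$ can be routed. Because $\gamma_v$ avoids all surgery regions throughout the sequence, no additional framing scalars are introduced beyond those already fixed in Proposition \ref{Fixed framing CPG cluster}.
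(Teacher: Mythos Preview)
Your proposal is correct and follows essentially the same approach as the paper: both rely on the single-mutation identity of Proposition \ref{Fixed framing CPG cluster} together with the observation that a capping path through $v$ is disjoint from the support of each $\mathbf{E}(e_i^{\epsilon_i})$ and therefore commutes past it. The only organizational difference is that you cap first and then induct step by step, whereas the paper iterates the uncapped tangle identity to get the full $\Psi$ and then caps once at the end, sliding the capping path through all of $\Psi$ in one go; the underlying geometric input is identical.
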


\begin{proof}
    The expression for $\mathbf{A}(\Lambda_{\Gamma_n},f)$ and $\mathbf{A}(\Lambda_\Gamma,f)$ follows immediately from Proposition \ref{Fixed framing CPG cluster}. To obtain this version of the statement, we only need to note that to obtain $\mathbf{A}(\Lambda_{\Gamma_n},f,v)$ and $\mathbf{A}(\Lambda_\Gamma,f)$ from these tangles, we chose the capping paths as in \cite[Theorem 7.1.]{Scharitzer-Shende}. Attaching the capping paths immediately yields the left-hand side of the equation. However, the right hand side, now is no longer a proper product of elements as the capped tangles now have components above and below $\Psi$. However, as we chose a capping path going through the vertex $v$ which is disjoint from any element of $\Psi$, we may commute this component of the capped tangles below all elements of $\Psi$ and thus reobtain the desired product structure on the right hand side of the equation.
\end{proof}

\section{Relation to cluster algebra
in 4d symplectic geometry}

In this article, we showed that skein-valued cluster transformations arise naturally when considering how curve counts transform for disk surgery of 2d Legendrians in a 5d contact manifold. 
Previous works have investigated cluster transformations which appear in the context of disk surgery of 2d Lagrangians in a 4d exact symplectic manifold.  In this appendix, we explain the relationship.

First let us recall the 4d setup.  
Consider a Weinstein symplectic 4-manifold $X$, and exact Lagrangians $\Lambda, \Lambda'$.   The moduli space of rank one brane structures on $\Lambda$ 
is the torus $\mathcal{M}_1(\Lambda) \cong H^1(\Lambda, \mathbb{C}^*)$; it carries 
a symplectic structure induced from  the Poincar\'e pairing on $\Lambda$.  When $\mathcal{M}_1(\Lambda) \cap \mathcal{M}_1(\Lambda') \ne \emptyset$,\footnote{The intersection could e.g. be taken in the moduli of pseudoperfect modules for the Fukaya category.  Said moduli space is defined finitely presented categories in \cite{Toen-Vaquie}, and finite presentation for the Fukaya category of a Weinstein manifold follows from \cite{GPS3} and standard facts about constructible sheaves.} the intersection defines a rational isomorphism of Poisson tori.  
When $\Lambda$ and $\Lambda'$ are related by Lagrangian disk surgery, the isomorphism can be identified with the 
simplest cluster transformation (see e.g. \cite{Auroux-T, STW, Pascaleff-Tokonog}).   
This (4d, exact) setting already contains many interesting cluster structures previously constructed 
combinatorially, in particular those associated to bicolored graphs on surfaces \cite{STWZ}.  

Now let $W$ 
be a 6d Weinstein symplectic Calabi-Yau manifold, whose ideal contact boundary $\partial_\infty W$ contains
the above $X$ as a Liouville hypersurface.  The examples we considered in the present article have $W = \R^6$.  
We may consider the partially wrapped Fukaya category  $Fuk(W, X)$, and we write 
$\mathcal{M}(W, X)$ for its moduli of pseudoperfect objects. 
For exact $\Lambda \subset X$, we denote also by $\Lambda$ a Legendrian lift to the contactization $X \times \mathbb{R}$.  
We write similarly
$Fuk(W,  \Lambda)$ and $\mathcal{M}(W,  \Lambda)$.

One can see from \cite{GPS1, GPS2} that there are pushout diagrams of categories
and correspondingly pullback diagrams of moduli:  
\begin{equation} \label{pullback diagram}
\begin{tikzcd}
Fuk(W, \Lambda)   & Fuk(T^*\Lambda) \ar{l} & & \mathcal{M}(W,   \Lambda) \ar{r} \ar{d} & \mathcal{M}(\Lambda) \ar{d} \\ 
Fuk(W, X)  \ar{u}  & Fuk(X)  \ar{l} \ar{u} & & \mathcal{M}(W, X) \ar{r} & \mathcal{M}(X) 
\end{tikzcd}
\end{equation}
At present we only know how to prove \eqref{pullback diagram}  if $\Lambda \subset X$ is regular in the sense of \cite{Eliashberg-Ganatra-Lazarev}; 
regularity  enters
in e.g. the construction via \cite{GPS2} of the Viterbo functor $Fuk(X) \to Fuk(T^*\Lambda)$.  For the purpose of studying disk surgery $\Lambda \to \Lambda'$, we may replace
$X$ with the Weinstein manifold whose core is the union of $\Lambda$ and the Lagrangian disk determining the surgery; inside
this latter manifold, $\Lambda$ is evidently regular.

Let us write 
$\mathcal{M}_1(W,  \Lambda) = \mathcal{M}(W,  \Lambda) \times_{ \mathcal{M} (\Lambda)}  \mathcal{M}_1(\Lambda)$
and $\mathcal{A}(\Lambda)$ for the ideal cutting out the image of $\mathcal{M}_1(W, \Lambda)$ in $\mathcal{M}_1(\Lambda)$. 
When $W$ is subcritical, it follows from the punctured surgery formula \cite{Ekholm-Lekili} that $\mathcal{A}(\Lambda)$ 
cuts out what is called the `augmentation variety' in Legendrian contact homology.  Said ideal is generated by counts of holomorphic disks ending on the Reeb chords of index one; since $\mathbf{A}(\Lambda, \rho)$ is correspondingly a count of all genus curves, it follows from the definitions that 
$\mathcal{A}(\Lambda)$ is generated by the dequantizations $\mathbf{A}(\Lambda, \rho)|_{Lk, q=1}$. 

Suppose now $\Lambda, \Lambda' \subset X$ are Lagrangians related by disk surgery. 
A Lagrangian surgery disk lifts to a Legendrian surgery disk, so the Legendrians $\Lambda, \Lambda'$ are related
by Legendrian disk surgery.  Then 
$\mathcal{M}_1(\Lambda)$ and $\mathcal{M}_1(\Lambda')$ are related by cluster transformation, and 
it  follows formally
from the above discussion that said cluster transformation carries 
$\mathcal{A}(\Lambda)$ to $\mathcal{A}(\Lambda')$. 
This is the dequantization of the assertion \eqref{main formula} that the skein-valued counts $\mathbf{A}(\Lambda, \rho)$ transform by skein-valued cluster transformation. 


Finally, we explain how to put the cubic planar graph Legendrians 
in the context of Diagram (\ref{pullback diagram}).  Take $X$ to be the sphere plumbing whose skeleton is obtained from $S^2 = \partial \R^3$ by attaching
unknotted spheres at each vertex of $\Gamma$; this embeds in a standard neighborhood of the $S^2$.  
Indeed, the cluster structure relevant to \cite{Treumann-Zaslow} 
is that associated by \cite{Fock-Goncharov-Teichmuller} to rank two local systems on a sphere 
with unipotent monodromy around vertices $V_\Gamma$ of $\Gamma$.  
From \cite{STWZ} we learn that this variety arises as a moduli space of $Fuk(X')$, where $X'$ is constructed by taking the Liouville manifold
given by the cotangent bundle of $S^2 \setminus V_\Gamma$, and attaching Weinstein handles at the Legendrian lifts of a pair of concentric circles
around each puncture.   It is easy to see $X = X'$.  

We leave the reader to check that the cluster chart Lagrangians of \cite{STWZ}
are sent to the cluster chart Legendrians of \cite{Treumann-Zaslow}.  From this, \cite{STWZ, STW}, and Diagram \eqref{pullback diagram}, one recovers the fact (checked by hand in \cite{Treumann-Zaslow}) that $\mathcal{A}(L_\Gamma)$ and $\mathcal{A}(L_{\Gamma_e})$
are related by cluster transformation. 
The ``Chromatic Lagrangian'' of \cite{SSZ}, there presented locally by charts, 
is then described globally as the appropriate component of $\mathcal{M}(\C^3, X) \to \mathcal{M}(X)$. 

Finally, let us explain why Figure \ref{Fig:Half-Dehn} matches the half Dehn twist introduced in \cite{STW}.  Our above description of $X$ used the \cite{STWZ} presentation.  We could instead present it along the lines of \cite{STW} by taking one of the Lagrangians which give cluster charts, and attaching certain disks.  The attaching loci would be the $[e_i]$ of Figure \ref{Fig:Half-Dehn}.  The half Dehn twist of \cite{STW}  charaterized how such attaching loci transform under disk surgery.

\bibliographystyle{hplain}
\bibliography{skeinrefs}

\end{document}